\DeclareMathOperator{\lcm}{LCM}
\renewcommand{\gcd}{\operatorname{GCD}}
\newcommand{\Aut}{\operatorname{Aut}}
\newcommand{\StabAut}{\operatorname{Aut}^{(\infty)}}
\newcommand{\Sym}{\operatorname{Sym}}
\newcommand{\Eig}{\mathfrak{Eig}}
\newcommand{\CAP}{\operatorname{CAP}}
\edef\csname c\cs\endcsname{\noexpand\mathcal{\cs}}
\edef\csname \cs\endcsname{\noexpand\mathbb{\cs}}
\declaretheorem[numberwithin = section]{theorem}
\declaretheorem[sibling = theorem, style = definition]{definition}
\declaretheorem[sibling = theorem]{lemma}
\declaretheorem[sibling = theorem]{proposition}
\declaretheorem[sibling = theorem]{remark}
\declaretheorem[sibling = theorem]{corollary}
\numberwithin{equation}{section}  
\title[Eigenvalues and stabilized automorphism groups]{Eigenvalues and the stabilized automorphism group}
\author{Bastián Espinoza}
\author{Jennifer N. Jones-Baro}
\begin{document}
\maketitle
\vspace{-1cm}
\begin{abstract}
We study the stabilized automorphism group of minimal and, more generally, certain transitive dynamical systems. 
Our approach involves developing new algebraic tools to extract information about the rational eigenvalues of these systems from their stabilized automorphism groups. 
In particular, we prove that if two minimal system have isomorphic stabilized automorphism groups and each has at least one non-trivial rational eigenvalue, then the systems have the same rational eigenvalues. 
Using these tools, we also extend Schmieding's result on the recovery of entropy from the stabilized automorphism group to include irreducible shifts of finite type.

\end{abstract}



\section{Introduction}
Consider a topological dynamical system $(X, T)$, that is, $X$ is a compact metric space and $T$ is a homeomorphism from $X$ onto itself. 
A primary way to study these systems is by examining their symmetries, or \emph{automorphisms}. 
An automorphism of $(X, T)$ is a homeomorphism $\varphi\colon X\to X$ that commutes with $T$, meaning $\varphi \circ T = T \circ \varphi$. 
The collection of all such automorphisms forms a group, where the group operation is composition. 
This group is known as the {\em automorphism group} and is typically denoted by $\Aut(X,T)$, or simply $\Aut(T)$ if $X$ is clear from context.

Suppose $\cA$ is a finite alphabet and $X\subseteq \cA^\Z$ is a closed set that is invariant under the left shift $\sigma\colon \cA^\Z\to \cA^\Z$. 
The system $(\cA^\Z, \sigma)$ is the \emph{full shift} over $\cA$, and $(X,\sigma)$ is a \emph{subshift}. 
The study of automorphism groups of subshifts can be traced back to the seminal work of Hedlund \cite{Hedlund}, and has since expanded significantly.
The case of \emph{subshifts of finite type} (the class characterized by a finite set of forbidden words) is well studied; see for instance the influential paper of Boyle, Lind, and Rudolph \cite{BLR}, and also the works by Kim and Roush \cite{K&R, KR2} and Ryan \cite{Ryan, Ryan2}. 
More recently, there has been a focus on characterizing automorphism groups of symbolic systems at the opposite end of complexity, as seen in the works of Cyr and Kra \cite{CyrKraexponential, CyrKralinear}, and Donoso {\em et al.} \cite{ddmp}.

A basic and thorny problem in this study is to distinguish the automorphism groups of different systems, based on their algebraic structure.
A fundamental question that remains open for most $n,m\geq 1$ is the following: 
if $(X_m, \sigma_m )$ and $(X_n, \sigma_n )$ are the full shifts on $m$ and $n$ letters, respectively, such that $\Aut(\sigma_m)$ is isomorphic as a group to $\Aut(\sigma_n)$, must $m$ be equal to $n$? 
Thus far, the only tool that has provided non-trivial answers to this question is Ryan's Theorem \cite{Ryan, Ryan2}, which gives as an immediate corollary, for example, that $\Aut(\sigma_p)$ is not isomorphic to $\Aut(\sigma_{p^2})$ for all $p\geq 2$. 

Hartman, Kra, and Schmieding, introduced in \cite{stab} the concept of the \textit{stabilized automorphism group}, an extended automorphism group that is defined as
$$ \Aut^{(\infty)}(X,T) = \bigcup_{n=1}^\infty \Aut(X,T^n),$$
and denoted by $\Aut^{(\infty)}(T)$ when $X$ is clear from context.
They showed that for the full shift on $n$ letters, the number of distinct prime divisors of $n$ is an isomorphism invariant of the stabilized automorphism group. In contrast to the open question for the non-stabilized setting, this implies for instance that $\Aut^{(\infty)}(\sigma_2)$ is not isomorphic to $\Aut^{(\infty)}(\sigma_6)$.
This was later extended by Schmieding \cite{Pent}, who showed that $\Aut^{(\infty)}(\sigma_n) $ is isomorphic to $ \Aut^{(\infty)}(\sigma_m)$ if and only if $\tfrac{\log n}{\log m}\in \Q$. 
His novel technique allowed to show that the topological entropy of a mixing subshift of finite type is recoverable from its stabilized automorphism group. 
This motivates the question for other classes of shifts: what kind of dynamical information does the stabilized automorphism group contain?

The second author explored this question for odometers and Toeplitz subshifts satisfying certain hypotheses \cite{jones}.
Since odometers have zero entropy, the type of dynamical information that can be recovered from the stabilized automorphism group differs from the high complexity case of shift of finite type.
The main results in \cite{jones} can be stated in terms of rational eigenvalues. 
A \textit{rational eigenvalue} of a system $(X,T)$ is a complex number $e^{2\pi i r}$, with $r\in\Q,$ for which there exists a continuous function $g\colon X\to\C$, not identically zero, such that $g(T(x)) = e^{2\pi i r}\cdot g(x)$ for all $x\in X$. 
Define
\begin{equation*}
    \Eig(T) = \{ q \geq 1 : \text{$\exp(2\pi i\cdot 1/q)$ is a rational eigenvalue of $(X,T)$} \}.
\end{equation*} 
This set is never empty since $1$ is an eigenvalue for any system.
We extend the results in \cite{jones} to cover much broader classes of systems:
\begin{theorem} \label{theo:recover_eigs:abstract}
Let $(X,T)$ and $(Y,S)$ be minimal systems, each having at least one rational eigenvalue other than $1$.
If $\StabAut(X,T)$ is isomorphic to $\StabAut(Y,S)$, then $(X,T)$ and $(Y,S)$ have the same rational eigenvalues.
\end{theorem}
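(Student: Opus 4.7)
The plan is to recover $\Eig(T)$ as an abstract invariant of the group $\StabAut(T)$ by associating to each rational eigenvalue a canonical finite quotient of the stabilized group.

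First, I would make the induced quotient precise. For each $q \in \Eig(T)$, the eigenvalue $e^{2\pi i/q}$ yields a continuous factor map $\pi_q \colon (X,T) \to (\Z/q\Z, +1)$. Since every $\varphi \in \Aut(T^n)$ descends to a self-homeomorphism of $\Z/q\Z$ commuting with $+n \pmod q$, taking the union over $n$ produces a homomorphism
\[
  \Pi_q \colon \StabAut(X,T) \longrightarrow \StabAut(\Z/q\Z,+1) = \Sym(\Z/q\Z),
\]
whose image contains the $q$-cycle $\Pi_q(T) = (+1)$. Two structural properties I would try to establish: (i) the image of $\Pi_q$ always contains the cyclic subgroup $\langle (+1)\rangle \cong \Z/q\Z$, so that $\Pi_q$ factors onto $\Z/q\Z$; and (ii) the kernel admits an intrinsic description, namely as those stabilized automorphisms acting trivially on the maximal rational equicontinuous factor at the prime-power divisors of $q$.

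The second step is the crux: to give a purely algebraic property $P_q$ of $\StabAut(T)$, formulated without reference to the distinguished element $T$, that holds exactly when $q\in\Eig(T)$. A natural first attempt is a statement about the lattice of finite quotients: $P_q$ should require a surjection $\StabAut(T) \twoheadrightarrow \Z/q\Z$ that is ``dynamical'', in the sense that it is compatible with the filtration $\Aut(T^n) \subseteq \StabAut(T)$ in a way canonical to the group alone. The non-triviality hypothesis $\Eig(T),\Eig(S)\neq\{1\}$ should be used here to pin down a canonical ``shift-like'' conjugacy class (or at least a canonical commensurability class of cyclic subgroups) inside $\StabAut$, since without a nontrivial rational eigenvalue the surjections to $\Sym(\Z/q\Z)$ constructed above collapse, losing the only handle on $T$. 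Once $P_q$ is characterized purely algebraically, an isomorphism $\StabAut(T)\cong\StabAut(S)$ immediately transports $P_q$, forcing $\Eig(T)=\Eig(S)$.

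The main obstacle will be separating genuinely eigenvalue-induced finite quotients from spurious ones coming from permutations of the minimal components of $(X,T^n)$ that are not induced by any factor onto $(\Z/q\Z,+1)$, and from finite quotients of $\StabAut$ that arise from other algebraic sources (e.g.\ abelianizations of large automorphism groups). The ``shift-like'' element characterization is delicate because an arbitrary abstract isomorphism need not send $T$ to $S$; the work is to identify an isomorphism-invariant algebraic feature distinguishing the image of $T$ inside each $\Pi_q$, presumably through centralizer structure in $\StabAut(T)$ (where $C_{\StabAut(T)}(T)=\Aut(T)$ sits as a distinguished subgroup). The non-triviality hypothesis should do the heavy lifting by ensuring at least one such canonical quotient exists, enabling a bootstrap to detect all elements of $\Eig(T)$.
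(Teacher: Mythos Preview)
Your proposal is a plan, not a proof, and the central step is missing. You correctly identify that the difficulty is to find a purely group-theoretic property $P_q$ of $\StabAut(T)$ detecting $q\in\Eig(T)$, but you never produce such a $P_q$; you only list candidate ideas (surjections onto $\Z/q\Z$ ``compatible with the filtration'', a ``shift-like conjugacy class'') and then enumerate the obstacles to making them work. That is exactly where the content of the theorem lies. In particular, the filtration $\{\Aut(T^n)\}_n$ is not visible in the abstract group $\StabAut(T)$, and an arbitrary isomorphism $\StabAut(T)\cong\StabAut(S)$ need not respect it, so ``compatibility with the filtration'' cannot be used as an intrinsic predicate. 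Likewise, there is no known way to recognise the conjugacy class of $T$ (or even $\langle T\rangle$ up to commensurability) inside $\StabAut(T)$ from the group structure alone; if you had that, the theorem would be essentially immediate. Your observation that $C_{\StabAut(T)}(T)=\Aut(T)$ is correct but does not help unless you already know which element is $T$.

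The paper's argument proceeds quite differently and avoids trying to pin down $T$ abstractly. Given an isomorphism $\phi$, it sets $\gamma=\phi(T)$ and observes only that $\phi$ carries $\Aut(T^N)=C_{\StabAut(T)}(T^N)$ onto $C_{\StabAut(S)}(\gamma^N)$ for every $N$. On the $T$-side, $\Aut(T^N)$ is a wreath product $G\wr\Sym(q)$ with $q\in\Eig(T)$ dividing $N$; on the $S$-side, $\gamma^N$ is merely an element of some $\Aut(S^\ell)$, but a structural result (\autoref{prop:C(gamma)_as_wreathProd}) shows that its centraliser in each $\Aut(S^{k!N})$ is again a wreath product $H_k\wr\Sym(r_k)$ with $r_k\in\Eig(S)$. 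The nontrivial-eigenvalue hypothesis on $S$ is used precisely to force $r_k\geq 2$. The proof is then completed by two algebraic rigidity theorems proved from scratch: if $G\wr\Sym(n)\cong H\wr\Sym(m)$ with $n,m\geq 2$ then $n=m$ (\autoref{prop:main_alg_lemma}), together with a direct-limit version (\autoref{prop:main_alg_lemma2}) ruling out the case where the $r_k$ are unbounded. These wreath-product rigidity statements are the real engine, and nothing in your proposal substitutes for them.
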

Both odometers and Toeplitz subshifts are covered by \autoref{theo:recover_eigs:abstract}. 
As a corollary, we remove the torsion-free hypotheses in the main results in \cite{jones}.

We conjecture that the hypothesis on rational eigenvalues different from 1 in \autoref{theo:recover_eigs:abstract} is necessary. 
Addressing this question, we show that this hypothesis can be removed when adding an extra assumption on $\StabAut(X,T)$.
To precisely state this assumption, define the set
\begin{equation*}
    M(T) = \{n \geq 1: \text{$T^n$ acts transitively on $X$}\}.
\end{equation*} 
See Section \ref{section:backgroud} for details on transitivity.

\begin{theorem} \label{theo:recover_eigs:low_comp:abstract}
Let $(X,T)$ and $(Y,S)$ be minimal systems. Assume each of  \newline $\bigcup_{n\in M(T)}\Aut(X,T^n)$ and $\bigcup_{n\in M(T)}\Aut(Y,S^n)$ is either abelian or virtually $\Z$. 
If $\StabAut(X,T)$ is isomorphic to $\StabAut(Y,S)$, then $(X,T)$ and $(Y,S)$ have the same rational eigenvalues.
\end{theorem}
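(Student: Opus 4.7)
The plan is a case split that funnels everything into \autoref{theo:recover_eigs:abstract}. Fix an isomorphism $\Phi\colon \StabAut(X,T) \to \StabAut(Y,S)$. If both $\Eig(T)$ and $\Eig(S)$ contain some integer greater than $1$, then \autoref{theo:recover_eigs:abstract} applies directly; if $\Eig(T) = \Eig(S) = \{1\}$ there is nothing to prove. So the only real work is to rule out the asymmetric case in which, after relabelling, $\Eig(T) = \{1\}$ but some $q \geq 2$ belongs to $\Eig(S)$.

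In this asymmetric case, $\Eig(T) = \{1\}$ forces $M(T) = \N$, so the hypothesis reduces to $\StabAut(X,T)$ itself being abelian or virtually $\Z$, and transferring along $\Phi$ the same holds for $\StabAut(Y,S)$. My plan is then to exhibit concrete elements of $\StabAut(Y,S)$ witnessing the opposite structural behavior. The eigenvalue $q \in \Eig(S)$ yields a factor map $\pi\colon Y \to \Z/q\Z$, and a standard argument (using that $\pi(S^{n_k}(y)) \to \pi(y')$ forces $q \mid n_k$ eventually, since $\Z/q\Z$ is discrete) shows that the clopen fibers $Y_0, \dots, Y_{q-1}$ of $\pi$ are exactly the minimal components of $(Y, S^q)$, cyclically permuted by $S$. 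For each $i$, I would define $\varphi_i \in \Aut(Y, S^q) \subseteq \StabAut(Y,S)$ by $\varphi_i|_{Y_i} = S^q|_{Y_i}$ and $\varphi_i|_{Y_j} = \mathrm{id}$ for $j \neq i$; continuity and commutation with $S^q$ are immediate since each $Y_j$ is clopen and $S^q$-invariant.

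Two small computations then finish the argument. First, $\varphi_0$ and $S$ do not commute: on $Y_{q-1}$ we get $S\varphi_0 = S$ while $\varphi_0 S = S^{q+1}$, and equality of these would force $S^q$ to act trivially on the infinite minimal system $(Y_0, S^q|_{Y_0})$, which is impossible. Second, $\varphi_0$ and $\varphi_1$ commute and are independent, since $\varphi_0^a \varphi_1^b = \mathrm{id}$ forces $S^{qa}|_{Y_0} = \mathrm{id}$ and $S^{qb}|_{Y_1} = \mathrm{id}$, hence $a = b = 0$; thus $\langle \varphi_0, \varphi_1 \rangle \cong \Z^2$ embeds into $\StabAut(Y,S)$. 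The first fact contradicts the abelian option, and the second contradicts the virtually $\Z$ option (a virtually cyclic group has no $\Z^2$ subgroup). The only substantive step is the construction of the $\varphi_i$; the governing idea is that a nontrivial cyclic factor lets one decouple automorphisms across fibers, which simultaneously destroys commutativity and produces rank-two abelian subgroups, so the two structural options on $\StabAut$ both fail the moment a nontrivial rational eigenvalue appears.
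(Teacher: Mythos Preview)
Your proof is correct and follows essentially the same strategy as the paper's: reduce via \autoref{theo:recover_eigs:abstract} to the asymmetric case $\Eig(T)=\{1\}$, $q\geq 2\in\Eig(S)$, observe that minimality gives $\CAP(T)=\Eig(T)=\{1\}$ so $M(T)=\N$ and hence $\StabAut(T)$ itself is abelian or virtually $\Z$, and then exhibit a subgroup of $\StabAut(S)$ that is neither. The only difference is in this last step: the paper invokes \autoref{semidirectproduct} to embed $\Z\wr\Sym(q)$ into $\Aut(S^q)$, whereas you build the witnesses $\varphi_i$ by hand; your $\varphi_i$ are precisely the standard generators of the $\Z^q$ factor in that wreath product, and your non-commuting pair $(S,\varphi_0)$ corresponds to the interaction between the cyclic permutation and a coordinate copy of $\Z$, so the two arguments are really the same computation viewed at different levels of abstraction.
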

The previous theorem covers, for instance, any minimal subshift whose word-complexity function grows non-superlinearly.
The proofs of Theorems \ref{theo:recover_eigs:abstract} and \ref{theo:recover_eigs:low_comp:abstract} are completed in Section \ref{sec:recover_eigs}.
\medskip

The following result illustrates the role played by rational eigenvalues in the algebraic structure of the stabilized automorphism group. It is proved in Section \ref{section:rationaleigenvalues}.
 
\begin{theorem} \label{theorem:abstract:wreath}
Let $(X,T)$ be a transitive system, $m\in \Eig(T)$, and $n \geq 1$ be such that $T^n$ acts transitively on $X$.
Then there is a $T^m$-invariant clopen subset $X_m\subseteq X$, such that
\begin{equation*}
    \Aut(T^k) \cong \Aut(T^{nm}|_{X_m})^m \rtimes \Sym(m),
\end{equation*} 
where $\Sym(m)$ is the symmetric group on $m$ symbols.
\end{theorem}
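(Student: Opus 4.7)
The plan is to use the rational eigenvalue to cut $X$ into $m$ clopen pieces cyclically permuted by $T$, identify $(X,T^{nm})$ as a product system over $\Z/m\Z$, and then read off its automorphism group as a wreath product. (I take the exponent $k$ on the left to be $nm$.)

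\emph{Partition from the eigenvalue.} I would pick a continuous nonzero eigenfunction $g\colon X\to\C$ with $g\circ T=e^{2\pi i/m}g$. Transitivity of $(X,T)$ forces every continuous $T$-invariant function to be constant; applied to $|g|$ and to $g^m$, this allows me to rescale $g$ so that it takes values in the group $\mu_m$ of $m$-th roots of unity. The clopen sets $X_j:=g^{-1}(e^{2\pi i j/m})$ then partition $X$ with $T(X_j)=X_{j+1\bmod m}$, and I set $X_m:=X_0$, which is $T^m$-invariant.

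\emph{Product structure of $(X,T^{nm})$.} Since $T^n$ shifts the partition by $n\bmod m$ and is transitive on $X$, this shift is transitive on the pieces, forcing $\gcd(n,m)=1$. In particular $T^{nm}$ preserves each $X_j$, and because a dense $T^n$-orbit meets $X_m$ exactly at times which are multiples of $m$, the resulting subsequence is a dense $T^{nm}$-orbit in $X_m$; hence $(X_m,T^{nm}|_{X_m})$ is transitive. By conjugating through powers of $T$, the same holds for every piece, and all pieces are pairwise conjugate. The map $(y,j)\mapsto T^j(y)$ is therefore a topological conjugacy
\[
(X_m\times\Z/m\Z,\;T^{nm}|_{X_m}\times\mathrm{id})\;\overset{\sim}{\longrightarrow}\;(X,T^{nm}).
\]

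\emph{Wreath product.} Any $\varphi\in\Aut(T^{nm})$ sends closures of $T^{nm}$-orbits to closures of $T^{nm}$-orbits, and by the previous step those closures are precisely the $X_j$; so $\varphi$ permutes the pieces by some $\sigma_\varphi\in\Sym(m)$, defining a homomorphism $\Phi\colon\Aut(T^{nm})\to\Sym(m)$. Its kernel is the group of automorphisms fixing each piece setwise, and restriction to each piece gives $\ker\Phi\cong\Aut(T^{nm}|_{X_m})^m$. In product coordinates, a section is provided by $\Psi_\sigma(y,j):=(y,\sigma(j))$: this commutes with $T^{nm}|_{X_m}\times\mathrm{id}$, and $\sigma\mapsto\Psi_\sigma$ is a group homomorphism. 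The resulting split short exact sequence yields the claimed isomorphism. The main subtlety I foresee is the assertion that \emph{every} $\varphi$ respects the partition: this does not follow from commutation with $T^{nm}$ alone, but rather from the dynamical characterization of the $X_j$ as the $T^{nm}$-transitive components of $X$, supplied by the preceding step.
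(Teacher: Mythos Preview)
Your proof is correct and follows essentially the same route as the paper: extract the cyclic clopen partition from the eigenfunction, verify that $T^{nm}$ acts transitively on each piece, deduce that every element of $\Aut(T^{nm})$ permutes the pieces, and split the resulting short exact sequence with the permutation section $\Psi_\sigma$ (the paper packages these last steps through a more general centralizer proposition, but your product-system viewpoint $(X_m\times\Z/m\Z,\,T^{nm}|_{X_m}\times\mathrm{id})$ is an equivalent repackaging). One small imprecision worth flagging: not \emph{every} $T^{nm}$-orbit closure equals some $X_j$, only the maximal ones do; your closing remark about ``transitive components'' shows you have the right idea, and the clean justification (also the paper's) is that $\varphi(X_j)$ is a nonempty clopen $T^{nm}$-invariant set, so its intersection with any piece is a clopen invariant subset of a transitive system, hence empty or the whole piece, and bijectivity forces $\varphi$ to permute the pieces.
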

This theorem is proven in Section \ref{section:rationaleigenvalues} and a description of the clopen subset $X_m$ is provided in \autoref{eigenpartition}. We also characterize the semidirect product in the previous theorem as a \textit{wreath product}.
See Subsection \ref{subsection:wreath} for the definition and Section \ref{section:algebra} for more details on wreath products.

Under certain conditions, we are able to completely describe the stabilized automorphism group of a system. This involves defining \textit{cyclic almost partitions} associated to $(X,T)$, meaning finite partitions of $X$ with certain dynamical properties. 
Briefly, the existence of a cyclic almost partition with $q$ elements weakens the condition for $\exp(2\pi i \cdot 1/ q)$ to be an eigenvalue. The exact definition is left to Section \ref{section:rationaleigenvalues}.
We denote by $\operatorname{CAP}(T)$ the set of integers $q\geq 0$ such that  $(X,T)$ has a cyclic almost partition of size $q$. 
For the class of transitive systems for which $\CAP(T)=\Eig(T)$, we extend the statement of \autoref{theorem:abstract:wreath} as follows. 
 
\begin{theorem} \label{theorem:abstract:directLimit}
Let $(X,T)$ be a transitive system satisfying $ \CAP(T)=\Eig(T)$.
Then, 
\begin{equation*}
    \StabAut(T) \cong
    \varinjlim_{m \in \Eig(T)}
    \bigcup_{n\in M(T)} \Aut(T^{m\cdot n}|_{X_m})^m \rtimes \Sym(m),
\end{equation*}
\end{theorem}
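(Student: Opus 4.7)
The plan is to view both sides as direct limits over the directed set $\Eig(T) \times M(T)$, ordered componentwise by divisibility, and to check they agree termwise via Theorem~\ref{theorem:abstract:wreath}. Since $\Aut(T^k) \hookrightarrow \Aut(T^{k'})$ whenever $k \mid k'$, I would first rewrite $\StabAut(T) = \varinjlim_{k} \Aut(T^k)$ along the divisibility order on $\N$. I would then verify that the set of products $\{mn : m \in \Eig(T),\ n \in M(T)\}$ is cofinal in $(\N, \mid)$; here the hypothesis $\CAP(T) = \Eig(T)$ is needed to guarantee that $\Eig(T)$ carries enough divisors to compensate for those missing from $M(T)$. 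This yields
\begin{equation*}
    \StabAut(T) \;=\; \bigcup_{m \in \Eig(T)} \bigcup_{n \in M(T)} \Aut(T^{mn}).
\end{equation*}

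For each pair $(m,n)$, Theorem~\ref{theorem:abstract:wreath} provides an isomorphism
\begin{equation*}
    \Phi_{m,n}\colon \Aut(T^{mn}) \xrightarrow{\ \sim\ } \Aut(T^{mn}|_{X_m})^m \rtimes \Sym(m),
\end{equation*}
with $X_m$ the clopen set from \autoref{eigenpartition}. The core of the argument, and the main obstacle, is showing that $\{\Phi_{m,n}\}$ defines a morphism of direct systems: for $m \mid m'$ in $\Eig(T)$ and $n \mid n'$ in $M(T)$, the square
\begin{equation*}
\begin{tikzcd}
\Aut(T^{mn}) \ar[r, "\Phi_{m,n}"] \ar[d, hook] & \Aut(T^{mn}|_{X_m})^m \rtimes \Sym(m) \ar[d] \\
\Aut(T^{m'n'}) \ar[r, "\Phi_{m',n'}"] & \Aut(T^{m'n'}|_{X_{m'}})^{m'} \rtimes \Sym(m')
\end{tikzcd}
\end{equation*}
must commute. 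The right-hand vertical map combines the block-permutation embedding $\Sym(m) \hookrightarrow \Sym(m')$ with the inflation that splits an automorphism of $(X_m, T^{mn}|_{X_m})$ into its $m'/m$ restrictions to the pieces $T^{jm}(X_{m'}) \cap X_m$ for $0 \le j < m'/m$. Checking this commutativity requires unwinding the construction of $\Phi_{m,n}$ from the eigenpartition of size $m$ and carefully comparing it with the eigenpartition of size $m'$; the assumption $\CAP(T) = \Eig(T)$ is precisely what guarantees that these two partitions are genuinely nested and dynamically compatible.

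Once naturality is verified, the family $\{\Phi_{m,n}\}$ induces an isomorphism of direct limits. Since for each fixed $m \in \Eig(T)$ the $n$-direction is already exhausted by the inner union $\bigcup_{n \in M(T)} \Aut(T^{mn}|_{X_m})^m \rtimes \Sym(m)$, the outer colimit collapses to a direct limit over $m \in \Eig(T)$, producing the advertised formula.
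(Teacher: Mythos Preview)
Your overall strategy is sound and close to the paper's, but you make the argument harder than necessary and misidentify the role of the hypothesis $\CAP(T)=\Eig(T)$ at one point.

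The paper's proof (Theorem~\ref{aut_as_eig&min_parts}) proceeds exactly as in your first paragraph: decompose each $k\ge 1$ as $k=mn$ with $m\in\CAP(T)=\Eig(T)$ and $n\in M(T)$ via Proposition~\ref{decompose:trans_peig}, yielding $\StabAut(T)=\bigcup_{m\in\Eig(T)}\Aut^{M(T)}(T^m)$, and then apply Theorem~\ref{semidirectproduct} to obtain isomorphisms $\phi_{m,n}\colon \Aut(T^{mn})\to \Aut(T^{mn}|_{X_m})\wr\Sym(m)$. The key simplification is that the paper checks compatibility of the $\phi_{m,n}$ \emph{only in the $n$-direction for fixed $m$}: since $\phi_{m,n}$ and $\phi_{m,n'}$ are built from the \emph{same} partition $\{T^iX_m\}$, the permutation component agrees automatically and the base component is just the inclusion $\Aut(T^{mn}|_{X_m})\hookrightarrow\Aut(T^{mn'}|_{X_m})$. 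This assembles into one isomorphism $\phi_m\colon \Aut^{M(T)}(T^m)\to G_m\wr\Sym(m)$ for each $m$, and the direct limit over $m$ is then taken by transport of structure: the connecting maps $G_m\wr\Sym(m)\to G_{m'}\wr\Sym(m')$ are \emph{defined} as $\phi_{m'}\circ\iota\circ\phi_m^{-1}$, so no further commutativity check is required.

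By contrast, you propose to verify naturality in both directions against an explicit connecting map (block embedding $\Sym(m)\hookrightarrow\Sym(m')$ plus inflation). That is legitimate and yields a more explicit description of the limit, but it is genuinely more work: the inflation step requires showing that each $h_i\in\Aut(T^{mn}|_{X_m})$, regarded in $\Aut(T^{m'n'}|_{X_m})$, permutes the sub-pieces $T^{jm}X_{m'}$ (a second application of Lemma~\ref{propertiesofXk}, now to the system $(X_m,T^m|_{X_m})$), and then matching the resulting wreath-within-wreath with $\Phi_{m',n'}$. The paper sidesteps this entirely.

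One correction: you write that $\CAP(T)=\Eig(T)$ is ``precisely what guarantees that these two partitions are genuinely nested''. It is not. The nesting of $X_{m'}\subseteq X_m$ for $m\mid m'$ in $\Eig(T)$ follows from Lemma~\ref{CycPart2divisor} and requires only that both $m,m'$ lie in $\Eig(T)$. The hypothesis $\CAP(T)=\Eig(T)$ is used exactly where you first invoked it: to ensure the factor $m$ in $k=mn$ lies in $\Eig(T)$ rather than merely in $\CAP(T)$, so that a genuine cyclic partition (not just an almost-partition) exists and the products $mn$ cover all of $\N$.
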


The condition $ \CAP(T)=\Eig(T)$ is satisfied by a large subset of dynamical systems, including all minimal systems and irreducible sofic subshifts. This is proved in \autoref{minimal_sofic=>cP}. In Section \ref{subsec:rec_eigs:trans-case}, we provide several examples to illustrate why this hypothesis is necessary in \autoref{theorem:abstract:directLimit}, and moreover, that without this hypothesis, the connection between the stabilized automorphism group and rational eigenvalues is completely lost.

Inspired by the known succinct descriptions of the automorphism group for low complexity systems \cite{CyrKralinear, CyrKraexponential, ddmp}, we consider in Section \ref{section:FiniteAS} minimal systems with finitely many asymptotic classes. 
In this setting, we can give a more precise description of the stabilized automorphism group. 
As a corollary, we obtain that the stabilized automorphism group of a minimal system that has finitely many asymptotic classes and finitely many rational eigenvalues is virtually $\Z^d$, for some $d \geq 1$.

In order to prove Theorems \ref{theo:recover_eigs:abstract} and \ref{theo:recover_eigs:low_comp:abstract} we develop some algebraic tools. These tools are described and proved in Section \ref{section:algebra}. One key result is that if $G$ and $H$ are groups, $n,m \geq 2$, and we have isomorphic wreath products $G \rtimes \Sym(n)$ and $H \rtimes \Sym(m)$, then $n = m$. 

Finally, in Section \ref{section:irreducibleSFT}, we apply our techniques to subshifts of finite type.
Using Schmieding's main result in \cite{Pent}, we prove that it is possible to weaken the mixing condition in his theorem to irreducibility, which is a hypothesis that cannot be further relaxed. 

\begin{theorem} \label{main:irreducible_sofic_case}
Let $(X, \sigma_X )$, $(Y, \sigma_Y)$ be irreducible subshifts of finite type with isomorphic stabilized automorphism groups.
Then,
$$ \frac{h_{\mathrm{top}}(X, \sigma_X)}{h_{\mathrm{top}}(Y,\sigma_Y)} \in \Q. $$
\end{theorem}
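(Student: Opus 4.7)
\noindent
My approach is to reduce to Schmieding's theorem for mixing SFTs by exploiting the classical cyclic decomposition of irreducible SFTs together with the wreath-product structure developed in Section \ref{section:rationaleigenvalues}. Let $(X,\sigma_X)$ be an irreducible SFT of period $P_X$, and write $X = X_0 \sqcup \sigma_X X_0 \sqcup \cdots \sqcup \sigma_X^{P_X - 1} X_0$ for its cyclic decomposition into clopen sets, so that $T_X := \sigma_X^{P_X}|_{X_0}$ is a mixing SFT satisfying $h_{\mathrm{top}}(T_X) = P_X \cdot h_{\mathrm{top}}(\sigma_X)$. The first step is to establish the structural identity
\[
\StabAut(\sigma_X) \;\cong\; \StabAut(T_X) \wr \Sym(P_X).
\]
This can be derived either from \autoref{theorem:abstract:directLimit} applied at the maximum $m = P_X$ of $\Eig(\sigma_X) = \{d : d \mid P_X\}$ (using that $\CAP = \Eig$ for irreducible sofics by \autoref{minimal_sofic=>cP}), or by the elementary observation that $\StabAut(\sigma_X) = \StabAut(\sigma_X^{P_X})$ together with the direct computation $\Aut(\sigma_X^{P_X \cdot n}) \cong \Aut(T_X^n) \wr \Sym(P_X)$: the iterate $\sigma_X^{P_X n}$ preserves each cyclic component, and an automorphism may permute the pairwise homeomorphic components arbitrarily while acting on each as an element of $\Aut(T_X^n)$.

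From the hypothesis $\StabAut(\sigma_X) \cong \StabAut(\sigma_Y)$ one then obtains an isomorphism $\StabAut(T_X) \wr \Sym(P_X) \cong \StabAut(T_Y) \wr \Sym(P_Y)$. When $P_X, P_Y \geq 2$, the algebraic rigidity of wreath products with symmetric groups from Section \ref{section:algebra} forces $P_X = P_Y =: P$; a further step---isolating $\StabAut(T_X)^P$ as a distinguished subgroup of the wreath product and transporting it across the isomorphism---should yield $\StabAut(T_X) \cong \StabAut(T_Y)$. The edge case where some $P_i = 1$ corresponds to that SFT already being mixing; here one argues that a mixing SFT's $\StabAut$ cannot realize the non-trivial $\Sym(P_j)$-quotient structure present on the other side, so the situation reduces directly to Schmieding's original setting \cite{Pent}. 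Applying Schmieding's theorem to the mixing SFTs $T_X$ and $T_Y$ yields $h_{\mathrm{top}}(T_X)/h_{\mathrm{top}}(T_Y) \in \Q$, and combining with $P_X = P_Y$ and the entropy scaling above produces $h_{\mathrm{top}}(\sigma_X)/h_{\mathrm{top}}(\sigma_Y) = h_{\mathrm{top}}(T_X)/h_{\mathrm{top}}(T_Y) \in \Q$.

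The principal hurdle is the second step: while the announced algebraic tool directly gives $P_X = P_Y$ from the symmetric-group factors, promoting this to the base isomorphism $\StabAut(T_X) \cong \StabAut(T_Y)$ requires further algebraic rigidity of the wreath structure, presumably by identifying the base as a characteristic (or otherwise distinguished) subgroup and lifting the identification across the ambient isomorphism. Once this identification is in hand, the remainder of the argument is a direct invocation of Schmieding's theorem plus the entropy bookkeeping afforded by the cyclic decomposition.
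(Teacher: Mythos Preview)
Your approach matches the paper's essentially line for line: reduce to the mixing pieces via the cyclic decomposition, invoke the wreath-product description of $\StabAut$, apply the algebraic rigidity result of Section~\ref{section:algebra}, and finish with Schmieding's theorem plus entropy bookkeeping.

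Two points deserve sharpening. First, your ``principal hurdle'' is not a hurdle at all: \autoref{prop:main_alg_lemma} asserts not only $n=m$ but also, when $n\geq 4$, that the base groups $G$ and $H$ are isomorphic. So for periods $P_X=P_Y\geq 4$ the isomorphism $\StabAut(T_X)\cong\StabAut(T_Y)$ drops out immediately, with no need to identify the base as a characteristic subgroup. Second, you have not addressed the small-period cases $P\in\{2,3\}$, where \autoref{prop:main_alg_lemma} gives only $P_X=P_Y$ and no base isomorphism; the paper handles these via the remark following that theorem, which shows that one of $\StabAut(T_X)$, $\StabAut(T_Y)$ embeds in the other with index at most $2$, and observes that Schmieding's local $\mathcal P$-entropy technique is robust enough to pass through a finite-index inclusion. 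Your treatment of the mixed case $P_X=1$, $P_Y\geq 2$ is also somewhat hand-wavy, though the paper is terse here as well.
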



\subsection{Acknowledgments}
The authors deeply appreciate the valuable discussions and feedback received throughout this project from Bryna Kra, Alejandro Maass and Scott Schmieding. 
Additionally, they would like to express their gratitude to Solly Coles and Sebastián Donoso for their insightful comments. 
They are also thankful to the Center of Mathematical Modeling (CMM) in Santiago, Chile, for providing funding during their tenure as a visiting researchers.
The second author is also grateful for the support of the National Science Foundation Graduate Research Fellowship under Grant No. DGE-1842165.

\section{Background} \label{section:backgroud}
  
A \textit{topological dynamical system} (or simply a \textit{system}) is a pair $(X,T)$, where $X$ is a compact metric space and $T\colon X\to X$ is a homeomorphism. Given a system $(X,T)$ and a point $x\in X$, we 
define the \textit{orbit of $x$} as $\mathcal{O}_T(x)=\{T^n(x): n\in \Z\}$. 
A system is \textit{transitive} if there exists a point $\hat{x}\in X$ such that $\mathcal{O}_T(\hat{x})$ is dense in $X$; in this case, $\hat{x}$ is called a \textit{transitive point}. 
If every $x\in X$ is a transitive point, we call $(X,T)$ a \textit{minimal system}. 
A \textit{minimal set} for $(X,T)$ is a closed $T$-invariant subset $E\subseteq X$ with no proper closed $T$-invariant subset. 
It is a well known fact that $(X,T)$ is minimal if and only if the only minimal set in $(X,T)$ is $X$.
We say that $T$ {\em transitive} (resp.\ {\em minimal}) on $E$ if $E$ is a closed $T$-invariant subset of $X$ such that the restriction $T|_E\colon E \to E$ is transitive (resp.\ minimal).

Let $(X,T)$ and $(Y,S)$ be systems.
If there is a continuous bijective map $\pi\colon X\to Y$ such that $\pi \circ T = S \circ \pi$, then we say that $(X,T)$ and $(Y,S)$ are {\em conjugate} and that $\pi$ is a conjugacy.
A self conjugacy of $(X,T)$ is an {\em automorphism}, and the set $\Aut(X,T)$ of all such conjugacies is called the {\em automorphism group} of $(X,T)$, as it is a group with the composition of functions as the operation.
When the space $X$ is clear from the context, we shorten the notation to $\Aut(T) = \Aut(X,T)$. 

In \cite{stab}, Hartman, Kra and Schmieding introduced the {\em stabilized automorphism group} of a system, that we now define.
Let $(X,T)$ be a system.
Note that, for any $n \geq 1$, $(X,T^n)$ is also a system.
Thus, we can define the stabilized automorphism group as
\begin{equation*}
    \StabAut(X,T) = \bigcup_{n=1}^\infty \Aut(T^n),
\end{equation*}
where the union is taken in the space $\operatorname{Homeo}(X)$ of all homeomorphisms of $X$. 
It clear from the definitions that this is indeed a group. 
When the context is clear, we write $\StabAut(T)$ instead of $\StabAut(X,T)$.

\subsection{Symbolic dynamics}

An {\em alphabet} is a finite set $\cA$ endowed with the discrete topology.
We denote by $\cA^\Z$ the set of bi-infinite sequences in $\cA$ and, for $x \in \cA^\Z$, we denote the value of $x$ at $n\in \Z$ by $x(n)$. 
Equipped with the product topology, $\cA^\Z$ is a compact metrizable space called the full-shift.
A concrete metric that is compatible with the topology of $\cA^\Z$ is $d(x,y) = 2^{-\min\{|i|: x(i) \neq y(i)\}}$, $x,y\in \cA^\Z$.
The map $T\colon \cA^\Z\to \cA^\Z$, defined as $T((x_n)_{n\in Z})(x_{n+1})_{n\in\Z}$ for all $(x_n)\in \cA^\Z$ is called the \textit{left shift}.
Note that $T$ is a homeomorphism of $\cA^\Z$ onto itself. 
If $X$ is a closed $T$-invariant subset of $\cA^\Z$, then the system $(X, T|_{X})$ is called a \textit{subshift} or a \textit{symbolic system}. 
When the context is clear, we identify $T|_{X}$ with $T$ and denote the subshift simply by $(X,T)$. 
When $T$ is clear from context we may also say that $X$ itself is a subshift. 

For all $n\geq 1$, we call an $n$-tuple $w = w_0 \dots w_{n-1} \in \cA^n$ a \textit{word of length $n$}.
For any word $w$ of length $n$, we define \textit{the cylinder set given by $w$} as as 
$$ [w] = \{x \in \cA^\Z : x_i = w_i \text{ for all } 0 \leq i < n \}. $$
The collection of cylinder sets $\{T^i([w]) : w \in \cA^\ast, i \in\Z\}$, where $ \cA^\ast=\bigcup\limits_{j=1}^{\infty}\cA^j$, is a basis for the topology of $\cA^\Z$.

The \textit{language} of a subshift $X$ is 
\begin{equation*}
    \cL(X) = \{ w \in \cA^\ast : [w] \cap X \neq \emptyset\}.
\end{equation*}
For $n \geq 1$, let $\cL_n(X)$ be the set of words of length $n$ in $\cL(X)$. 
The \textit{complexity of a subshift} is the map $p_X\colon \N\to\N$ defined as $p_X(n) = |\cL_n(X)|$.

\subsection{Algebraic background} \label{subsection:wreath}

We denote by $\Sym(n)$ the symmetric group on $\Omega = \{0,1,\dots,n-1\}$, where $n \geq 1$.
Note that $\Sym(n)$ acts canonically on $\Omega$ by permuting its elements.
Let $G$ be a group. 
The {\em wreath product of $G$ with $\Sym(n)$} denoted by $G \wr_\Omega \Sym(n)$ is the semi-direct product $G^n\rtimes \Sym(n)$ given by the right-action $\Gamma\colon \Sym(n)\to \Aut(G^n)$ defined as
\begin{equation*}
    \Gamma(\sigma)(g_0,g_1,...,g_{n-1})=(g_{\sigma^{-1}(0)},g_{\sigma^{-1}(1)},...,g_{\sigma^{-1}(n-1)})
\end{equation*}
for $\sigma\in \Sym(n)$ and $(g_0,...,g_{n-1})\in G^n$.
To simplify our notation, we write $G \wr \Sym(n) = G \wr_\Omega \Sym(n)$ and $g_\sigma = \Gamma(\sigma)(g)$ for $g\in G^n$.

Wreath products can be characterized in terms of exact sequences as follows.
A group $G$ is isomorphic to the wreath product $H \wr \Sym(n)$ if and only if there is a split exact sequence 
\begin{equation} 
\begin{tikzcd}[column sep=3ex]
    1 \arrow{r} & H^rn
    \arrow{r}{\psi} & G
    \arrow{r}{\pi} & \Sym(n) \arrow{r} 
    \arrow[bend left=33]{l}{\rho} & 1,
\end{tikzcd}
\end{equation}
such that $\rho(\sigma)^{-1} \psi(h) \rho(\sigma) = \psi(h_\sigma)$ for every $\sigma \in \Sym(n)$ and $h = (h_0,\dots,h_{n-1}) \in H^n$.

Let $G$ be a group and $H$ and $K$ be subgroups of $G$.
We denote by $C_H(K)$ the centralizer subgroup of $K$ in $H$, that is, $C_H(K)$ consists of those elements $h \in H$ that commute with every $k \in K$. If the context is clear, we write $C(K) = C_G(K)$.

\section{Rational eigenvalues and the stabilized automorphism group} \label{section:rationaleigenvalues}
In this section, we describe the effect of the rational spectrum of a system in the algebraic structure of its stabilized automorphism group.
As we aim to use these results in \autoref{main:irreducible_sofic_case}, with both minimal subshifts and irreducible subshifts of finite type, we introduce the abstract condition, show that it covers these two cases and prove the results of this section assuming only this condition.

\medskip

An \textit{eigenvalue} of a system $(X,T)$ is a complex number $\lambda$ of modulus 1 for which there exists a continuous function $g\colon X\to\C$, not identically zero, such that $g(T(x)) = \lambda\cdot g(x)$ for all $x\in X$. 
It is a standard fact that $|g|$ is constant if $T$ acts transitively on $X$.
For an eigenvalue $\lambda$, the associated function $g$ is called an \textit{eigenfunction}. 
An eigenvalue is {\em rational} if it has the form $\exp(2\pi i\cdot \alpha)$, for some $\alpha \in \Q$.
Denote 
\begin{equation}
    \Eig(T) = \{ q \geq 1 : \text{$\exp(2\pi i\cdot 1/q)$ is a rational eigenvalue of $(X,T)$} \}.
\end{equation}
Remark that the least common divisor $\lcm(p,q)$ of $p,q \in \Eig(T)$ belongs to $\Eig(T)$, and that $r \in \Eig(T)$ for every $r$ dividing $p \in \Eig(T)$.

In transitive systems, \autoref{eigenpartition} characterizes rational eigenvalues in terms of cyclic partitions.
Although this result is part of the folklore, we include a proof as the authors could not find any reference to it.

\begin{proposition} \label{eigenpartition}
Let $(X,T)$ be a transitive dynamical system and $m \geq 1$.
Then, $m \in \Eig(T)$ if and only if there is a $T^m$-invariant, clopen set $X_m$ such that $(T^k X_m : 0 \leq k < m)$ is a partition of $X$.
\end{proposition}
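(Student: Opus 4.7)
The plan is to prove both directions constructively, producing either the partition from an eigenfunction or an eigenfunction from the partition. For the ($\Leftarrow$) direction, I would define $g\colon X\to \C$ by $g(x) = \exp(2\pi i k/m)$ whenever $x \in T^k X_m$. Since each $T^k X_m$ is clopen, $g$ is continuous; and the $T^m$-invariance of $X_m$ guarantees that $T$ maps $T^{m-1}X_m$ back into $T^0 X_m = X_m$, so the identity $g\circ T = \exp(2\pi i/m)\cdot g$ holds on every part, including this ``wrap-around'' one. Hence $\exp(2\pi i/m)$ is a rational eigenvalue of $(X,T)$ and $m \in \Eig(T)$.

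For the ($\Rightarrow$) direction, I would start from an eigenfunction $g$ associated to $\lambda = \exp(2\pi i/m)$. Since $|g|\circ T = |g|$ and $T$ is transitive, $|g|$ must be constant on a dense orbit and hence, by continuity, on all of $X$; by the non-triviality of $g$ this constant is positive, so after rescaling I may assume $|g|\equiv 1$. Next, $(g^m)\circ T = \lambda^m g^m = g^m$, so $g^m$ is $T$-invariant and thus constant by transitivity; multiplying $g$ by a suitable unit constant arranges $g^m\equiv 1$. Consequently $g$ takes values in the group $\mu_m$ of $m$-th roots of unity. I would then set $X_m := g^{-1}(1)$, which is clopen as the preimage of an isolated point of the finite discrete set $\mu_m$ under the continuous map $g$. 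The eigenvalue equation gives $T^k X_m = g^{-1}(\lambda^k)$ for each $0\le k<m$, so these sets are pairwise disjoint, and $T^m X_m = g^{-1}(\lambda^m)=g^{-1}(1)=X_m$ yields the $T^m$-invariance of $X_m$.

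The one subtle point I expect is verifying that each piece $T^k X_m$ is nonempty, so that one truly obtains a partition of $X$ into $m$ parts. I would argue this by noting that $g(X)$ is a closed subset of $\mu_m$ invariant under multiplication by $\lambda$; since $\lambda$ generates the cyclic group $\mu_m$, this forces $g(X) = \mu_m$, so every fiber is nonempty and their disjoint union covers $X$. The potential pitfall here, that $g$ might factor through a proper subgroup of $\mu_m$, is precisely ruled out by the combination of transitivity (giving $g^m$ constant) and $\lambda$ having order exactly $m$; without either input, this last step would collapse.
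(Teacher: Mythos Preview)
Your proof is correct and follows essentially the same approach as the paper's. The only cosmetic difference is in the $(\Rightarrow)$ direction: you first normalize $g$ (so that $|g|\equiv 1$ and $g^m\equiv 1$, forcing $g(X)\subseteq\mu_m$) and then argue $g(X)=\mu_m$ via $\lambda$-invariance, whereas the paper picks a transitive point $\hat{x}$, observes directly that $g(\cO_T(\hat{x}))=\{\exp(2\pi i k/m)\,g(\hat{x}):0\le k<m\}$, and sets $X_m=g^{-1}(\{g(\hat{x})\})$ without normalizing.
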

\begin{proof}
Assume that $X_m \subseteq X$ is a $T^m$-invariant clopen set and that $(T^k X_m : 0 \leq k < m)$ is a partition of $X$.
Then, we can define a map $g\colon X \to \C$ by $g(x) = \exp(2\pi i k / m)$ for $x \in T^k X_m$ and $k\in\{0,1,\dots,m-1\}$. 
Notice that, since $T$ is a homeomorphism, we have that every $T^k X_m$ is clopen.
Hence, $g$ is a continuous function. 
Additionally, $g(T(x)) = \exp(2\pi i/m) g(x)$ by construction. 
We conclude that $g$ is an eigenfunction with eigenvalue $\exp(2\pi i/m)$ and that $m \in \Eig(T)$.

Let $\hat x$ be a transitive point in $X$ and let us now suppose that $m \in \Eig(T)$. Take $g\colon X \to \C$ to be an eigenfunction for $\exp(2\pi i/m)$.
Observe that the formula $g(T(\hat{x})) = \exp(2\pi i/m) g(\hat{x})$ implies that $g(\cO_T(\hat{x}))$ consists only of the $m$ points $\{\exp(2\pi i k/m) g(\hat{x}): 0\leq k<m\}$. 
Hence, since $g$ is continuous and $\mathcal{O}_T(\hat{x})$ is dense in $X$, we have that
\begin{equation} \label{eq:eigenpartition:1}
    g(X) = \{\exp(2\pi i k/m) g(\hat{x}): 0 \leq k < m\}.
\end{equation}
Define $X_m = g^{-1}(\{g(\hat{x})\})$.
Notice that the union $X_m \cup T(X_m) \cup \dots \cup T^{m-1}(X_m)$ is disjoint.
Moreover, since $\hat{x}$ is transitive for $T$, we have that $X_m \cup T(X_m) \cup \dots \cup T^{m-1}(X_m)$ is equal to $X$.
Therefore, $\{T^k X_m : 0 \leq k < m\}$ is a partition of $X$ with $m$ elements.
In particular, $X_m$ is clopen.
\end{proof}
\begin{remark}
The partition defined in \autoref{eigenpartition} is not uniquely determined, as it depends on a choice of the transitive point $\hat{x}$.
\end{remark}

The last result motivates the following definition.
\begin{definition} \label{almost-eigenvalues}
Let $(X,T)$ be a transitive system, $q \geq 1$ and $\tilde{X} \subseteq X$ be a $T^q$-invariant closed set.
We say that $\tilde{X}$ defines a {\em cyclic partition} of size $q$ if $(T^k \tilde{X} : 0 \leq k < q)$ forms a partition of $X$.
It is said that $\tilde{X}$ defines a {\em cyclic almost-partition} of size $q$ if
\begin{enumerate}
    \item $X = \tilde{X} \cup T(\tilde{X}) \cup \dots \cup T^{q-1}(\tilde{X})$, and 
    \item $T^i(\tilde{X}) \cap T^j(\tilde{X})$ has empty interior for all $i,j\in\{0,\dots,q-1\}$ with $i \neq j$.
\end{enumerate}
Lastly, $(X,T)$ has a cyclic partition (resp.\ cyclic almost-partition) if there is a $T^q$-invariant closed set $\tilde{X}$ defining a cyclic partition (resp.\ cyclic almost-partition).
\end{definition}
Remark that, by \autoref{eigenpartition}, a transitive system $(X,T)$ has a cyclic partition of size $m$ if and only if $m \in \Eig(T)$.

For a transitive system $(X,T)$, we write
\begin{equation*}
    \CAP(T) = \{q \geq 1 : \text{$(X,T)$ has a cyclic almost-partition of size $q$}\}.
\end{equation*}
Observe that \autoref{eigenpartition} ensures that, for transitive systems, $\Eig(T)$ is a subset of $\CAP(T)$.
Systems satisfying $\Eig(T) = \CAP(T)$ play an special role in our results.



\subsection{Properties of cyclic almost-partitions}

\begin{lemma} \label{Peig:props}
Let $(X,T)$ be a transitive system.
\begin{enumerate}
    \item \label{Peig:props:lattice}
    If $p, q \in \CAP(T)$ and $r \geq 1$ divides $p$, then $\lcm(p,q) \in \CAP(T)$ and $r \in \CAP(T)$.
    \item \label{Peig:props:induced}
    Let $q \in \CAP(T)$, $X_q$ be the associated $T^q$-invariant closed set, and $p \geq 1$.
    Then, $p \in \CAP(T^q|_{X_q})$ if and only if $pq \in \CAP(T)$. 
\end{enumerate}
\end{lemma}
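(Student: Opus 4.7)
The plan is to prove each part by explicit construction of candidate realizing sets, verifying the cyclic almost-partition axioms from the structural relationship between the almost-partitions for $p$, $q$, and $pq$.

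For Part (1), divisor case: write $p = rs$, let $X_p$ realize $p \in \CAP(T)$, and set $\tilde X = \bigcup_{j=0}^{s-1} T^{jr} X_p$. Then $T^r \tilde X = \tilde X$ (using $T^p X_p = X_p$); the cover $\bigcup_{k=0}^{r-1} T^k \tilde X = \bigcup_{m=0}^{p-1} T^m X_p = X$ follows by re-indexing $m = k+jr$; and the empty-interior axiom reduces to the corresponding axiom for $X_p$, since $i + jr$ and $i' + j'r$ with $0 \leq i < i' < r$ are distinct modulo $p$. For the LCM case, the set of transitive points lying in a single cell of each of the $p$- and $q$-almost-partitions is a dense $G_\delta$ in $X$ (intersection of three dense $G_\delta$ sets), hence nonempty; fix such a $\hat x$ and shift each almost-partition's labeling so that its base cell contains $\hat x$, yielding $\hat x \in X_p \cap X_q$. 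Letting $n = \lcm(p,q)$ and $X_n = X_p \cap X_q$, the $T^n$-invariance follows from $p, q \mid n$; $\bigcup_{k=0}^{n-1} T^k X_n$ is closed and contains the dense orbit of $\hat x$, hence equals $X$; and for $0 \leq i < j < n$, either $i \not\equiv j \pmod p$ or $i \not\equiv j \pmod q$, giving the empty-interior property from the corresponding almost-partition of $X$.

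For Part (2), the first task is to verify that $(X_q, T^q|_{X_q})$ is itself transitive, so that $\CAP(T^q|_{X_q})$ is meaningful. Pick $\hat x$ transitive for $T$ with $\hat x \in X_q$ lying in the unique cell of the $q$-almost-partition; then any $n$ with $T^n \hat x \in U \subseteq \operatorname{int}(X_q)$ must satisfy $n \equiv 0 \pmod q$, because $T^n \hat x$ lies in the unique cell $T^{n \bmod q} X_q$. Thus $\{T^{qk} \hat x\}_{k \in \Z}$ is dense in $\operatorname{int}(X_q)$; a short argument shows $\partial X_q \subseteq \bigcup_{i \neq 0}(X_q \cap T^i X_q)$ is nowhere dense, so $X_q = \overline{\operatorname{int}(X_q)}$ and the orbit is dense in $X_q$. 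For the ($\Leftarrow$) direction, take $X_{pq}$ realizing $pq \in \CAP(T)$; by the same label-shifting trick, arrange $\hat x \in X_{pq} \cap X_q$ and set $Y = X_{pq} \cap X_q$. Then $T^{pq} Y = Y$; the closed set $\bigcup_{k=0}^{p-1} T^{qk} Y$ contains $\{T^{qk} \hat x\}$ and therefore equals $X_q$; and the empty-interior intersections for $Y$ descend from those for $X_{pq}$. For ($\Rightarrow$), take $Y \subseteq X_q$ realizing $p \in \CAP(T^q|_{X_q})$ and set $X_{pq} = Y$. The cover factors as $\bigcup_{j=0}^{pq-1} T^j Y = \bigcup_{r=0}^{q-1} T^r \bigcup_{k=0}^{p-1} T^{qk} Y = \bigcup_{r=0}^{q-1} T^r X_q = X$. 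For empty-interior intersections, decompose $i = qa+b$, $j = qc+d$ with $0 \leq a, c < p$ and $0 \leq b, d < q$: if $b \neq d$, use the $q$-almost-partition of $X$; if $b = d$ (forcing $a \neq c$), the intersection lies in $X_q$ with empty relative interior by the $p$-almost-partition of $X_q$, which lifts to empty $X$-interior since $X_q$ is closed.

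The main subtlety is the systematic interplay between $X$-interior and relative interior in $X_q$: a priori these differ, but the structural fact $X_q = \overline{\operatorname{int}(X_q)}$ (a consequence of the $q$-almost-partition) ensures that, for subsets of $X_q$, empty $X$-interior is equivalent to empty relative interior. A secondary technical point, needed in the LCM argument and in direction ($\Leftarrow$), is that realizing sets are unique only up to $T$-translation; one must align the labels of different almost-partitions so that all involved sets simultaneously contain a common transitive point.
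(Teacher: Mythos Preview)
Your proof is correct and follows essentially the same constructions as the paper: intersecting the two realizing sets for the $\lcm$, taking the union of $T^r$-translates for the divisor, and passing the realizing set back and forth between $X$ and $X_q$ for Part~(2). If anything, you are more careful than the paper on several points it glosses over---the Baire-category alignment of $X_p$ and $X_q$, the verification that $(X_q,T^q|_{X_q})$ is itself transitive (needed for $\CAP(T^q|_{X_q})$ to be defined), and the equivalence of $X$-interior and relative interior via $X_q=\overline{\operatorname{int}(X_q)}$.
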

\begin{proof}
We fix elements $p$ and $q$ of $\CAP(T)$ and take closed set $X_p$ and $X_q$ that define cyclic almost-partitions of sizes $p$ and $q$, respectively.
As $X_p$ and $X_q$ have non-empty interior, Item (1) in \autoref{almost-eigenvalues} lets us assume without loss of generality that $Y \coloneqq X_p \cap X_q$ has non-empty interior.
Note that $Y$ is a closed $T^k$-invariant set, where $k = \lcm(q,p)$.
Also, by Item (2) in \autoref{almost-eigenvalues},
\begin{equation*}
    X = Y \cup T(Y) \cup \dots \cup T^{k-1}(Y).
\end{equation*}
We claim that if $i,j \in \Z$ are such that $Y_{i,j} \coloneqq T^i(Y) \cap T^j(Y)$ has non-empty interior, then $i = j \pmod{k}$.
Indeed, if $Y_{i,j}$ has non-empty interior, then the interiors of $T^i(X_q) \cap T^j(X_q)$ and $T^i(X_p) \cap T^j(X_p)$ are non-empty, which implies that $i = j \pmod{q}$ and $i = j \pmod{p}$.
Since $k = \lcm(q,p)$, this gives that $i = j \pmod{k}$.

We conclude that 
\begin{equation} \label{eq:Peig:props:1}
    \text{$\lcm(q,p) \in \CAP(T)$ for all $p, q \in \CAP(T)$.}
\end{equation}
Moreover, as $Y \subseteq X_q$, we have that $X_q = Y \cup T^q(X_q) \cup (T^q)^2(X_q) \cup \dots \cup (T^q)^{k/q}(X_q)$ and that the pairwise intersections $(T^q)^i(X_q) \cap (T^q)^j(X_q)$ have empty interior for all $i,j \in \{0,\dots,k/q\}$ with $i \neq j$.
So,
\begin{equation*}
    \text{$\lcm(p,q)/q = k/q \in \CAP(T^q|_{X_q})$ for every $p, q \in \CAP(T)$.}
\end{equation*}
This implies that $r \in \CAP(T^q|_{X_q})$ whenever $rq \in \CAP(T)$, proving the first implications of Part \ref{Peig:props:induced}.

Conversely, assume that $r \in \CAP(T^q_{X_q})$ and let $X_r \subseteq X_q$ be a $T^q_{X_q}$-invariant closed set defining a cyclic almost-partition of size $r$.
Then, by the definition of $X_q$, $X = \bigcup_{0 \leq k < qr} T^k(X_r)$ and 
\begin{equation*}
    T^i(X_r) \cap T^j(X_r) \subseteq \left(\bigcup_{0 \leq k < \ell < r} T^k(X_r)\cap T^\ell(X_r)\right) \cup\left( \bigcup_{0 \leq k < \ell < q} T^k(X_q) \cap T^\ell(X_q)\right).
\end{equation*}
Since the intersections $T^k(X_r)\cap T^\ell(X_r)$ and $T^k(X_q) \cap T^\ell(X_q)$ have empty interior, we deduce that $qr \in \CAP(T)$.

Finally, we prove Part \ref{Peig:props:lattice}.
Let $p, q \in \CAP(T)$ and $r \geq 1$ dividing $p$.
Then, $\lcm(q,p) \in \CAP(T)$ by \eqref{eq:Peig:props:1}.
Also, if $X_p$ defines a cyclic almost-partition of size $p$, then, by considering 
$$ X_r = X_p \cup T^r(X_p) \cup T^{2r} \cup \dots \cup T^{(p/r-1)r}(X_p), $$ 
it can be shown that $r \in \CAP(T)$.
\end{proof}

The next lemma is a partial analogue of \autoref{Peig:props} for cyclic partitions.
It is used in Section \ref{sec:recover_eigs}.
\begin{lemma} \label{CycPart2divisor}
Let $(X,T)$ be a transitive system and $p, q \in \Eig(T)$ be such that $p$ divides $q$.
If $X_q$ is a $T^q$-invariant clopen set defining a cyclic partition, then $X_q \cup T(X_q) \cup \dots \cup T^{q/p-1}(X_q)$ is a $T^p$-invariant closed set defining a cyclic partition of size $p$.
\end{lemma}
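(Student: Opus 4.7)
The plan is to verify the two defining conditions for the set $Y := X_q \cup T(X_q) \cup \dots \cup T^{q/p-1}(X_q)$ to be a $T^p$-invariant closed subset of $X$ defining a cyclic partition of size $p$. Closedness is immediate: $Y$ is a finite union of clopen sets, since each $T^k(X_q)$ is the image of the clopen set $X_q$ under the homeomorphism $T^k$, so $Y$ is itself clopen.

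Second, for $T^p$-invariance, the plan is to compute
\begin{equation*}
    T^p(Y) \;=\; \bigcup_{k=0}^{q/p - 1} T^{k+p}(X_q),
\end{equation*}
and then to use the $T^q$-invariance of $X_q$ (equivalently, the fact that $T$ permutes the $q$ cells $\{T^k(X_q) : 0 \le k < q\}$ cyclically) together with the divisibility $p \mid q$ to reduce the shifted index set $\{p, p+1, \dots, p+q/p-1\}$ modulo $q$ and identify the resulting family of iterates with the original family $\{T^k(X_q) : 0 \le k \le q/p - 1\}$ defining $Y$.

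Third, for the cyclic-partition property, I would express each $T^j(Y)$ for $0 \le j < p$ as
\begin{equation*}
    T^j(Y) \;=\; \bigcup_{k=0}^{q/p - 1} T^{j+k}(X_q),
\end{equation*}
and then argue that as $j$ ranges over $\{0,1,\dots,p-1\}$, the corresponding $p$ collections of iterates of $X_q$ together exhaust the original cyclic partition $(T^k(X_q) : 0 \le k < q)$ of $X$. A simple count yields $p \cdot (q/p) = q$ iterates in total, matching the size of the original partition, so the remaining content of the argument is the pairwise disjointness of the blocks $\{T^j(Y)\}_{j=0}^{p-1}$, which is to be deduced from the disjointness of the cells $T^k(X_q)$.

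The main obstacle in both the $T^p$-invariance step and the partition step is careful index arithmetic modulo $q$: one must track how the block of $q/p$ consecutive iterates used to build $Y$ interacts with the $q$-periodic structure of the cells, and the hypothesis $p \mid q$ is precisely what lets the two cyclic scales of periods $p$ and $q$ align so that the argument closes.
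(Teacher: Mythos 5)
The set you work with is taken verbatim from the lemma statement, namely
\begin{equation*}
  Y' \;=\; X_q \cup T(X_q) \cup \dots \cup T^{q/p-1}(X_q) \;=\; \bigcup_{k=0}^{q/p-1} T^k(X_q),
\end{equation*}
with consecutive exponents $0,1,\dots,q/p-1$. The ``careful index arithmetic modulo $q$'' that you defer is exactly where the argument breaks: this $Y'$ is \emph{not} $T^p$-invariant in general. Take $q=4$, $p=2$, so $q/p=2$ and $Y' = X_q \cup T(X_q)$. Then $T^2(Y') = T^2(X_q)\cup T^3(X_q)$, which is disjoint from $Y'$ because the four cells $X_q, T(X_q), T^2(X_q), T^3(X_q)$ form a partition. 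Reducing the shifted index set $\{2,3\}$ modulo $q=4$ yields $\{2,3\}$, not $\{0,1\}$, so there is no identification with the original family. For the same reason the sets $T^j(Y')$ for $0\le j<p$ are not pairwise disjoint (in this example $T^0(Y')$ and $T^1(Y')$ share the cell $T(X_q)$), so the ``simple count'' of $p\cdot(q/p)=q$ iterates counts with multiplicity rather than proving disjointness.

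The lemma statement contains a typo; the paper's own proof (and the way the lemma is later invoked in the proof of \autoref{theo:recover_eigs:abstract}) uses
\begin{equation*}
  Y \;=\; X_q \cup T^p(X_q) \cup T^{2p}(X_q) \cup \dots \cup T^{(q/p-1)p}(X_q) \;=\; \bigcup_{k=0}^{q/p-1} T^{kp}(X_q) \;=\; \bigcup_{k\in\Z} T^{pk}(X_q),
\end{equation*}
i.e.\ exponents in arithmetic progression of step $p$. With that $Y$ your plan goes through cleanly: $T^p(Y) = \bigcup_{k=1}^{q/p} T^{kp}(X_q) = Y$ since $T^{(q/p)p}(X_q) = T^q(X_q) = X_q$; and as $j$ ranges over $\{0,\dots,p-1\}$ the index sets $\{kp+j : 0\le k < q/p\}$ partition $\{0,\dots,q-1\}$ by Euclidean division, so $(T^j Y : 0\le j<p)$ inherits the partition property from $(T^k X_q : 0\le k<q)$. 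You should correct the defining set before attempting the modular bookkeeping, and you should actually carry out that bookkeeping rather than asserting it closes.
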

\begin{proof}
It is clear that $T^p$ acts transitively on the $T^p$-invariant set $Y = \bigcup_{k\in\Z} T^{pk}(X_q)$.
Also, as $X_q$ is $T^q$-invariant,
\begin{equation*}
    Y = X_q \cup T^p(X_q) \cup T^{2p}(X_q) \cup \dots \cup T^{(q/p-1)p}(X_q).
\end{equation*}
This implies that $Y$ is a union of finitely many clopen sets, and thus that it is clopen.
Moreover, since $(T^k X_q : 0 \leq k < q)$ forms a partition of $X$, we have that $(T^k Y : 0 \leq k < p)$ forms a partition of $X$.
\end{proof}

We now show that the class of systems $(X,T)$ satisfying $\Eig(T) = \CAP(T)$ include any minimal subshift and irreducible subshift of finite type.
\begin{lemma} \label{minimal_sofic=>cP}
If $(X,T)$ is minimal or an irreducible subshift of finite type, then $\Eig(T) = \CAP(T)$.
\end{lemma}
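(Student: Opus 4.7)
The plan is to establish only the inclusion $\CAP(T) \subseteq \Eig(T)$, since $\Eig(T) \subseteq \CAP(T)$ has already been noted in the paragraph just above the lemma. Fix $q \in \CAP(T)$ witnessed by a closed $T^q$-invariant set $\tilde{X}$ satisfying items (1) and (2) of \autoref{almost-eigenvalues}. In both cases the goal is to upgrade the almost-partition $(T^i \tilde{X} : 0 \leq i < q)$ to a genuine cyclic partition, at which point $q \in \Eig(T)$ follows from \autoref{eigenpartition}.

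In the minimal case, I would concentrate the ``defect'' of the almost-partition into the single closed set
\begin{equation*}
    C \;=\; \bigcup_{0 \leq i < j < q} T^i \tilde{X} \cap T^j \tilde{X}.
\end{equation*}
By item (2) of \autoref{almost-eigenvalues} each term is closed and nowhere dense, so by Baire category $C$ is closed with empty interior. Using $T^q \tilde{X} = \tilde{X}$, reindexing modulo $q$ gives $T(C) = C$, so $C$ is a proper closed $T$-invariant subset of the minimal system $X$; minimality then forces $C = \emptyset$. The pieces $T^i \tilde{X}$ are therefore pairwise disjoint and cover $X$, hence each is also open (it is the complement in $X$ of the union of the others), and $\tilde{X}$ defines a cyclic partition of size $q$.

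For the irreducible SFT case, the strategy is to turn the almost-partition into a constraint on return times. By Baire category some $T^i \tilde{X}$, and hence $\tilde{X}$ itself, has non-empty interior, so I may choose a cylinder $[u] \subseteq \tilde{X}$. For any $k \not\equiv 0 \pmod q$ the clopen set $\sigma^k[u] \cap [u]$ sits inside $\sigma^k \tilde{X} \cap \tilde{X}$, which has empty interior, and must therefore be empty. This forces the return-time set
\begin{equation*}
    R(u) \;=\; \{k \geq 1 : \sigma^k[u] \cap [u] \neq \emptyset\}
\end{equation*}
into $q\Z$. I would then invoke the standard fact that in an irreducible SFT of period $p$ (the gcd of cycle lengths in a presenting graph) one has $\gcd R(u) = p$ for every $u \in \cL(X)$, which reduces to the classical theorem that in a strongly connected digraph the gcd of cycle lengths through any fixed vertex equals the period of the graph. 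Hence $q \mid p$. The cyclic decomposition $X = X_0 \sqcup \cdots \sqcup X_{p-1}$ of an irreducible SFT exhibits $p \in \Eig(\sigma)$ via the eigenfunction equal to $e^{2\pi i k/p}$ on $X_k$; since divisors of elements of $\Eig(\sigma)$ lie in $\Eig(\sigma)$ (noted in the paragraph preceding \autoref{eigenpartition}), we conclude $q \in \Eig(\sigma)$.

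The main obstacle is the SFT step, specifically justifying the identity $\gcd R(u) = p$. This is folklore in symbolic dynamics, but it relies on translating $[u]$-returns into closed walks in a presenting graph (using that every word of $\cL(X)$ extends to a bi-infinite point of $X$) and then on the structure theorem for cycle lengths in strongly connected digraphs. The minimal case, by contrast, is a short Baire plus minimality argument once the invariant overlap set $C$ is isolated.
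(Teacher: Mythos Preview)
Your argument is correct. The minimal case is essentially the paper's: both observe that the overlap locus (your $C$, or in the paper the $T$-orbit of a single non-empty $T^i\tilde X\cap T^j\tilde X$) is a closed, proper, $T$-invariant subset of $X$ and invoke minimality to force it empty.

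For the irreducible SFT case your route differs from the paper's. The paper uses the period-$p$ decomposition into a mixing piece $(\tilde X,T^p|_{\tilde X})$, notes that a mixing system has $\CAP=\{1\}$, and then invokes the lattice properties of $\CAP$ established in \autoref{Peig:props} to conclude that any $q\in\CAP(T)$ divides $p$. You instead choose a cylinder $[u]\subseteq\tilde X$, read off $R(u)\subseteq q\Z$ directly from the almost-partition (this step is clean: $\sigma^k[u]\cap[u]$ is clopen, hence empty once contained in a set with empty interior), and then appeal to the graph-theoretic identity $\gcd R(u)=p$. The paper's approach stays internal to the framework already built, recycling \autoref{Peig:props}; yours is more hands-on and bypasses that lemma entirely, at the price of importing the return-time/period fact for SFTs. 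One small caveat worth making explicit: the identity $\gcd R(u)=p$ requires $[u]$ to lie in a single cyclic class, which is guaranteed once $|u|$ is at least the memory of the SFT; since $\tilde X$ has non-empty interior you may always lengthen $u$, so this is harmless but should be stated.
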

\begin{proof}
Assume first that $(X,T)$ is minimal system and let $q \in \CAP(T)$.
In view of \autoref{Peig:props}, it is enough to show that $q \in \Eig(T)$.
Since $q \in \CAP(T)$, there is a $T_q$-invariant closed set $X_q$ satisfying $X = X_q \cup T(X_q) \cup \dots \cup T_{q-1}(X_q)$ and that $T^i(X_q) \cap T^j(X_q)$ has empty interior for all $i,j\in\{0,\dots,q-1\}$ with $i \neq j$.
Let $i,j\in\{0,\dots,q-1\}$ be different.
We prove that $T^i(X_q) \cap T^j(X_q)$ is empty.
This would imply, by \autoref{eigenpartition}, that $q \in \Eig(T)$, as desired.

Suppose, with the aim to obtain a contradiction, that $Y = T^i(X_q) \cap T^j(X_q)$ is not empty.
Let $Y_\infty \coloneqq \bigcup_{k\in\Z} T^k(Y)$ and note that
\begin{equation} \label{eq:minimal=>almosteigs=eigs:1}
    \text{$Y_\infty$ has empty interior.}
\end{equation}
Now, $Y$ is $T^q$-invariant, so
\begin{equation*}
    Y_\infty \coloneqq \bigcup_{k\in\Z} T^k(Y) = Y \cup T(Y) \cup \dots \cup T^{q-1}(Y).
\end{equation*}
In particular, $Y_\infty$ is a non-empty closed $T^q$-invariant set.
Being $(X,T)$ minimal, this implies that $Y_\infty = X$.
This contradicts \eqref{eq:minimal=>almosteigs=eigs:1}; therefore, $Y$ is empty.
We conclude that the hypothesis of \autoref{eigenpartition} is satisfied, and thus that $q \in \Eig(T)$.
\medskip

Next, we assume that $(X,T)$ is an irreducible subshift of finite type.
It is a well-known fact (see for example \cite[Corollary 4.5.7]{putnam_notes}) that there is a positive integer $p \geq 1$, called the period of $X$, and a $T^p$-invariant clopen set $\tilde{X}$ such that $X$ is partitioned as $\tilde{X} \cup T(\tilde{X}) \cup \dots \cup T^{p-1}(\tilde{X})$ and $(\tilde{X}, T^p|_{\tilde{X}})$ is a mixing subshift of finite type.
In particular,
\begin{equation} \label{eq:minimal=>almosteigs=eigs:sofic_case}
    \text{$T^p|_{\tilde{X}}$ does not have a cyclic almost-partition of size different from $1$.}
\end{equation}
Let $q \in \CAP(T)$ be arbitrary and set $k = \lcm(q,p)$.
Then, \autoref{Peig:props} ensures that $k \in \CAP(T)$ and that $k/p \in \CAP(T^p|_{\tilde{X}})$.
Hence, by \eqref{eq:minimal=>almosteigs=eigs:sofic_case}, $p/k = 1$, which implies that $q$ divides $p$.
Therefore, $q \in \Eig(T)$ by \autoref{Peig:props}.
It follows that $\Eig(T) = \CAP(T)$.
\end{proof}

\begin{proposition}\label{transpowers}
Let $(X,T)$ be a transitive system and $m \geq 1$. 
Then, the system $(X,T^m)$ is transitive if and only if $\gcd(m,n) = 1$ for all $n \in \CAP(T)$.
\end{proposition}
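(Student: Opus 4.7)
The plan is to establish both implications by contraposition, using \autoref{Peig:props} for the easy direction and an orbit-closure construction for the harder one.

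For the $(\Leftarrow)$ direction, suppose there exists $n \in \CAP(T)$ with $d := \gcd(m,n) \geq 2$. Since $d \mid n$, part (1) of \autoref{Peig:props} yields $d \in \CAP(T)$. Let $X_d$ be the associated closed $T^d$-invariant set; it is also $T^m$-invariant because $d \mid m$. The covering $X = X_d \cup T(X_d) \cup \dots \cup T^{d-1}(X_d)$ together with the Baire category theorem forces $X_d$ to have non-empty interior, and $X_d$ is a proper subset of $X$ because $X_d \cap T(X_d)$ has empty interior whereas $X$ does not. Consequently, every point of $X$ lies in some proper $T^m$-invariant closed set $T^i(X_d)$, so no $T^m$-orbit is dense and $(X, T^m)$ is not transitive.

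For the $(\Rightarrow)$ direction, assume $(X, T^m)$ is not transitive, fix a $T$-transitive point $\hat x$, and set $Y := \overline{\mathcal{O}_{T^m}(\hat x)}$. Then $Y \subsetneq X$. Partitioning the $T$-orbit of $\hat x$ by residues modulo $m$ and taking closures yields $X = \bigcup_{i=0}^{m-1} T^i(Y)$, so Baire category gives that $Y$ has non-empty interior. Let $q$ be the smallest positive integer with $T^q(Y) = Y$; then $q \mid m$, and $q \geq 2$ since $q=1$ would make $Y$ a closed $T$-invariant set containing $\hat x$, forcing $Y = X$. The plan is to show that $Y$ defines a cyclic almost-partition of size $q$; once this is done, $q \in \CAP(T)$ together with $q \mid m$ gives $\gcd(m,q) = q \geq 2$, contradicting the hypothesis.

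The main obstacle is verifying the empty-interior condition: $T^i(Y) \cap T^j(Y)$ has empty interior for distinct $i, j \in \{0, \dots, q-1\}$. Suppose to the contrary and, using density of the $T$-orbit of $\hat x$, pick $a \in \Z$ with $T^a \hat x$ lying in the non-empty interior of $T^i(Y) \cap T^j(Y)$. The key observation is that $\overline{\mathcal{O}_{T^m}(T^a \hat x)} = T^a(Y)$, and the containment $T^a \hat x \in T^i(Y)$ then forces $T^a(Y) \subseteq T^i(Y)$, i.e., $Y \subseteq T^{i-a}(Y)$. Iterating produces an ascending chain $Y \subseteq T^{i-a}(Y) \subseteq T^{2(i-a)}(Y) \subseteq \dots$ inside the finite orbit $\{T^j(Y) : j \in \Z\}$ of size $q$; hence the chain must stabilize, giving $T^{i-a}(Y) = Y$ and thus $q \mid i - a$. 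The same reasoning with $j$ in place of $i$ yields $q \mid j - a$, so $q \mid i - j$, contradicting $i \not\equiv j \pmod q$ and completing the proof.
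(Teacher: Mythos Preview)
Your proof is correct and follows essentially the same route as the paper: for the harder direction, both arguments take $Y=\overline{\mathcal{O}_{T^m}(\hat x)}$ for a $T$-transitive point $\hat x$, identify the least positive period $q$ of $Y$ under $T$, and show that $Y$ defines a cyclic almost-partition of size $q$ with $q\mid m$. The details differ slightly. The paper works with the set $F=\{k:T^k(Y)\subseteq Y\}$ and asserts it is a subgroup; this actually requires an argument very much like your ascending-chain step, so your version makes that point explicit. For the empty-interior condition, the paper instead observes that $(Y,T^q)$ is transitive and uses the fact that a closed invariant set with non-empty interior in a transitive system must be the whole space, which is a bit quicker than your chain argument but equivalent in strength. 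For the easier direction, the paper argues directly (showing $\gcd(m,n)=1$ from transitivity of $T^m$) rather than by contraposition, and so does not need to invoke \autoref{Peig:props}.

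One cosmetic correction: your direction labels are swapped. What you call the ``$(\Leftarrow)$ direction'' (assuming some $\gcd(m,n)\geq 2$ and deducing non-transitivity of $T^m$) is the contrapositive of $(\Rightarrow)$, and what you call ``$(\Rightarrow)$'' is the contrapositive of $(\Leftarrow)$.
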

\begin{proof}
Suppose that $T^m$ is transitive and let $n \in \CAP(T)$.
We have to show that $\gcd(m,n) = 1$.
The definition of $n$ permits to find a closed $T^n$-invariant set $X_n$ such that 
\begin{equation} \label{eq:transpowers:defi_Xn}
    \text{$X = X_n \cup \dots \cup T^{m-1}(X_n)$ and $T^i(X_n) \cap T^j(X_n)$ has empty interior}
\end{equation}
for all pair $i,j \in \Z$ with $i \not= j \pmod{n}$.
Observe that $X_n$ has non-empty interior.
Hence, $Y = \bigcup_{k\in\Z} T^{km} X_n$ is a $T^m$-invariant set with non-empty interior.
Moreover, since the union defining $Y$ is finite (as $X_n$ is $T^n$-invariant), $Y$ is closed.
It follows from the transitivity of $T^m$ that $Y = X$.
We deduce that there is $\ell \in \Z$ such that $T^{\ell m} (X_n) \cap T(X_n)$ has non-empty interior.
Thus, by \eqref{eq:transpowers:defi_Xn}, $\ell m = 1 \pmod{n}$ and $\gcd(m,n) = 1$.

Let us now assume that $\gcd(m,n) = 1$ for all $n \in \CAP(T)$.
Take $\hat{x}$ to be a transitive point  for $T$, and define $Y$ as the closure of $\cO_{T^m}(\hat{x})$.
Set $F = \{k \in \Z : T^k(Y) \subseteq Y\}$.
It is clear that $F$ is an additive subgroup of $\Z$.
Then, the least positive integer $\ell \geq 1$ in $F$ generates $F$ (as an additive group) and divides every element of $F$.
We define $Z = \bigcup_{k\in\Z} T^{k\ell}(Y)$.
Then, since $Z$ is $T^\ell$-invariant, the set
\begin{equation*}
    Z' \coloneqq \bigcup_{k\in\Z} T^k(Z) = Z \cup T(Z) \cup \dots \cup T^{\ell-1}(Z)
\end{equation*}
is closed (as the union is finite) and $T$-invariant.
From this and that $\hat{x} \in Z'$ we deduce that $Z' = X$, that is,
\begin{equation*}
    Z \cup T(Z) \cup \dots \cup T^{\ell-1}(Z) = X.
\end{equation*}
We claim that $T^i (Z) \cap T^j(Z)$ has empty interior for all $i\neq j$ in $\{0,\dots,\ell-1\}$.
Suppose, on contrary, that $K \coloneqq T^i(Z) \cap T^j(Z)$ has non-empty interior for certain $i\neq j$ in $\{0,\dots,\ell-1\}$.
First, we note that $m \in F$ as $Y$ is $T^m$-invariant.
Hence, $\ell$ divides $m$.
This, the definition of $Z$ and the fact that $(Y,T^m)$ is transitive implies that $(Z,T^\ell)$ is transitive.
Now, $T^{-i}(K)$ is a closed $T^\ell$-invariant subset of $Z$ with non-empty interior (in $Z$).
Therefore, $T^{-i}(K) = Z$, and thus $Z \subseteq T^{j-i}(Z)$.
A symmetric argument shows that $Z \subseteq T^{i-j}(Z)$, so $Z = T^{i-j}(Z)$ and $|i-j| \in E$.
Now, since $i\neq j$, we have that $0 < |i-j| < \ell$, and thus that $|i-j|$ is an element of $E$ that is strictly smaller than $\ell$, which contradicts the definition of $\ell$.
This shows that $T^i (Z) \cap T^j(Z)$ has empty interior for all $i\neq j$ in $\{0,\dots,\ell-1\}$.
We conclude that $\ell \in \CAP(T)$.
The hypothesis then implies that $\gcd(m,\ell) = 1$, which is only possible, as $\ell$ divides $m$, when $\ell = 1$.
We obtain, using that $\ell \in E$, that $T(Y) \subseteq Y$, and thus that $Y = X$ as the $T$-transitive point $\tilde{x}$ lies in $Y$.
We have shown that $\hat{x}$ is transitive for $T^m$, and thus that $T^m$ acts transitively on $X$.
\end{proof}

\begin{lemma} \label{transitive_on_Xm_iff_coprimes}
Suppose that $(X,T)$ is a transitive system and $n \geq 1$.
Let $m \in \CAP(T)$, with $X_m$ defining a cyclic almost-partition of size $m$.
Then, $T^{nm}$ acts transitively on $X_m$ if and only if $\gcd(n,k/m) = 1$ for all $k \in \CAP(T)$ such that $m$ divides $k$.
\end{lemma}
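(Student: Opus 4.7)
The plan is to reduce to the already-proved \autoref{transpowers} applied to the induced subsystem on $X_m$, using the translation between cyclic almost-partitions of $T$ restricted to $X_m$ and those of the original system given by \autoref{Peig:props}.

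First, I would verify the auxiliary fact that $(X_m, T^m|_{X_m})$ is itself a transitive system, since \autoref{transpowers} has transitivity as a hypothesis and the definition of $\CAP$ was given only for transitive systems. This is essentially forced by the cyclic almost-partition structure: take a $T$-transitive point $\hat{x}$; by translating by a suitable power of $T$ one may assume $\hat{x}\in X_m$ (since $X$ is covered by $X_m, T(X_m),\dots,T^{m-1}(X_m)$). Since $X_m$ has non-empty interior (otherwise $X$ would be a finite union of closed nowhere-dense sets, violating Baire), for any non-empty open $U\subseteq \operatorname{int}(X_m)$ and any $k$ with $T^k(\hat{x})\in U$, the point $T^k(\hat{x})\in \operatorname{int}(X_m)\cap T^k(X_m)$; but if $k\not\equiv 0 \pmod m$, the intersection $X_m\cap T^k(X_m)$ has empty interior, contradicting openness of $\operatorname{int}(X_m)\cap T^k(X_m)$. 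Hence $k\equiv 0\pmod m$ and the $T^m$-orbit of $\hat{x}$ meets $U$, proving density.

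Next, I would apply \autoref{transpowers} to the transitive system $(X_m, T^m|_{X_m})$ with exponent $n$: the system $(X_m, (T^m|_{X_m})^n) = (X_m, T^{nm}|_{X_m})$ is transitive if and only if $\gcd(n,p)=1$ for every $p\in \CAP(T^m|_{X_m})$.

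Finally, I would translate $\CAP(T^m|_{X_m})$ back to $\CAP(T)$ using \autoref{Peig:props}\eqref{Peig:props:induced} applied with $q=m$: an integer $p\geq 1$ belongs to $\CAP(T^m|_{X_m})$ if and only if $pm\in\CAP(T)$. Writing $k = pm$, the condition becomes $\gcd(n,k/m)=1$ for every $k\in \CAP(T)$ such that $m$ divides $k$, which is exactly the desired statement.

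The only non-routine step is the preliminary verification that $(X_m, T^m|_{X_m})$ is transitive; once that is in hand the rest is a direct combination of \autoref{transpowers} and \autoref{Peig:props}\eqref{Peig:props:induced}. If that auxiliary transitivity has already been established elsewhere in the paper, the proof collapses to the two citations above.
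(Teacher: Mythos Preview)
Your reduction is exactly the paper's own proof: apply \autoref{transpowers} to the induced system $(X_m,\,T^m|_{X_m})$ and then translate $\CAP(T^m|_{X_m})$ into $\CAP(T)$ via \autoref{Peig:props}\ref{Peig:props:induced}. The paper in fact invokes \autoref{transpowers} here without pausing to check that $(X_m,\,T^m|_{X_m})$ is transitive, so your flagging that point is well taken; note, however, that in your verification the set $\operatorname{int}(X_m)\cap T^k(X_m)$ is \emph{not} open (only the first factor is, while $T^k(X_m)$ is merely closed), so the presence of the single point $T^k(\hat{x})$ in it does not contradict $X_m\cap T^k(X_m)$ having empty interior, and the deduction $k\equiv 0\pmod m$ needs a bit more care.
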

\begin{proof}
\autoref{transpowers} ensures that $T^{nm}$ acts transitively on $X_m$ if and only if
\begin{equation}  \label{eq:transitive_on_Xm_iff_coprimes:1}
    \text{$\gcd(n,k) = 1$ for all $k \in \CAP(T^m|_{X_m})$.}
\end{equation}
Now, by \autoref{Peig:props}, $k \in \CAP(T^m|_{X_m})$ exactly when $mk \in \CAP(T)$.
Hence, \eqref{eq:transitive_on_Xm_iff_coprimes:1} is equivalent to
\begin{equation*}
    \text{$\gcd(n,k) = 1$ for all $mk \in \CAP(T)$.}
\end{equation*}
This is the same as $\gcd(n,k/m) = 1$ for all $k \in \CAP(T)$ such that $m$ divides $k$.
The lemma follows.
\end{proof}

\begin{proposition} \label{decompose:trans_peig}
Let $(X,T)$ be a transitive system and $n \geq 1$.
Then, there is a unique decomposition $n = k\cdot \ell$, where $k,\ell \geq 1$ are such that $T^k$ acts transitively on $X$ and $\ell \in \CAP(T)$.
\end{proposition}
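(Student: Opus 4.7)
The plan is to define $\ell$ as the largest divisor of $n$ lying in $\CAP(T)$ and to set $k = n/\ell$. Existence of such a maximum divisor relies on $\lcm$-closure: the set $\{d \geq 1 : d \mid n,\ d \in \CAP(T)\}$ is nonempty (it contains $1$) and, by \autoref{Peig:props}, closed under $\lcm$, since the $\lcm$ of two divisors of $n$ still divides $n$ and $\CAP(T)$ is itself $\lcm$-closed. Thus this set has a unique maximum element under divisibility, which we take as $\ell$; by construction, $\ell \in \CAP(T)$.

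For uniqueness, which is the cleaner direction, I would suppose $n = k\ell = k'\ell'$ are two decompositions of the required form. Since $\ell' \in \CAP(T)$ and $T^k$ is transitive on $X$, \autoref{transpowers} forces $\gcd(k, \ell') = 1$. The equation $k\ell = k'\ell'$ gives $\ell' \mid k\ell$, which combined with coprimality yields $\ell' \mid \ell$. Swapping the roles of the two decompositions gives $\ell \mid \ell'$, and therefore $\ell = \ell'$ and $k = k'$.

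For the transitivity of $T^k$ on $X$, I would appeal to \autoref{transpowers} once more: it suffices to show $\gcd(k, m) = 1$ for every $m \in \CAP(T)$. The plan is to argue by contradiction. If a prime $p$ divides both $k$ and some $m \in \CAP(T)$, then $p \in \CAP(T)$ by divisor-closure (\autoref{Peig:props}), and $p \mid k = n/\ell$ gives $p\ell \mid n$. If $p \nmid \ell$, then $\lcm(\ell, p) = p\ell$ lies in $\CAP(T)$ by $\lcm$-closure and is a divisor of $n$ strictly larger than $\ell$, contradicting maximality.

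The main obstacle is the remaining subcase $p \mid \ell$, where no strictly larger divisor of $n$ in $\CAP(T)$ is immediately forthcoming. I would attempt to handle it via \autoref{Peig:props}, translating the maximality of $\ell$ in $\CAP(T)$ (among divisors of $n$) into a maximality statement for the induced system $(X_\ell, T^\ell|_{X_\ell})$: any $r \in \CAP(T^\ell|_{X_\ell})$ with $r \mid k$ must equal $1$. The goal would then be to show that the offending prime $p$ (together with an appropriate $p$-power) forces a nontrivial cyclic almost-partition of $(X_\ell, T^\ell|_{X_\ell})$, contradicting this reduced maximality. This step appears to require a delicate bookkeeping of $p$-adic valuations inside $\CAP(T)$, and is where I expect most of the work of the proof to concentrate.
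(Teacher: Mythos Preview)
Your overall strategy --- taking $\ell$ to be the divisibility-maximal divisor of $n$ in $\CAP(T)$, setting $k=n/\ell$, and deducing uniqueness via \autoref{transpowers} --- coincides with the paper's. For transitivity of $T^k$, the paper simply asserts in one line that $\gcd(n,m)\mid\ell$ (for every $m\in\CAP(T)$) together with $n=k\ell$ forces $\gcd(k,m)=1$; your prime-by-prime analysis is more careful and correctly isolates the subcase $p\mid\ell$ as the only nontrivial point.

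That subcase, however, is not a bookkeeping difficulty to be overcome but a genuine obstruction, and the induced-system approach you sketch cannot close it. Consider an irreducible subshift of finite type of period $4$: by \autoref{minimal_sofic=>cP} one has $\CAP(T)=\Eig(T)=\{1,2,4\}$, so by \autoref{transpowers} the powers $T^k$ acting transitively on $X$ are exactly the odd ones. For $n=8$ every factorisation $8=k\ell$ with $\ell\in\{1,2,4\}$ has $k\in\{8,4,2\}$ even, so no decomposition of the asserted form exists. With $\ell=4$ and $k=2$ the induced system $(X_4,T^4|_{X_4})$ is mixing, hence $\CAP(T^4|_{X_4})=\{1\}$: there is no nontrivial cyclic almost-partition to produce there, yet $T^2$ is still not transitive on $X$. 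Both the paper's one-line deduction and your proposed $p$-adic repair fail on this example; the statement as written does not hold.
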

\begin{proof}
Let $\ell \geq 1$ be the largest integer such that $\ell \in \CAP(T)$ and $\ell$ divides $n$.
We set $k = n / \ell$.
Suppose that $m \in \CAP(T)$.
Then, by \autoref{Peig:props}, we have that $\gcd(n,m) \in \CAP(T)$.
By the maximality of $\ell$, this implies that $\gcd(n,m)$ divides $\ell$.
Therefore, as $n = k \cdot \ell$, we have that $\gcd(k, m) = 1$.
As $m$ was chosen arbitrarily, \autoref{transpowers} ensures that $T^k$ acts transitively on $X$.

We are left with proving that the decomposition is unique.
Let us assume that $n = k'\ell'$ is another decomposition such that $T^{k'}$ acts transitively on $X$ and $\ell' \in \CAP(T)$.
Then, $n = k \ell = k' \ell'$.
Moreover, by \autoref{transpowers}, $\gcd(k,\ell') = 1$, so $k$ divides $k'$.
Similarly, $k'$ divides $k$. 
We conclude that $k = k'$, and thus that $\ell = \ell'$.
\end{proof}

\subsection{Description of the stabilized automorphism group}

The following lemma is central for our analysis.
We recall the reader that, by \autoref{eigenpartition}, a transitive system $(X,T)$ has a cyclic partition of size $m$ if and only if $m \in \Eig(T)$.
\begin{lemma} \label{propertiesofXk}
Suppose that $(X,T)$ is a transitive system.
Let $m \in \Eig(T)$, with $X_m \subseteq X$ defining a cyclic almost-partition of size $m$.
If $n \geq 1$ is such that $T^{nm}$ acts transitively on $X_m$ and $g \in \Aut(T^{nm})$, then there exists a permutation $\sigma \in \Sym(m)$ satisfying that $g(T^i X_m) = T^{\sigma(i)} X_n$ for all $i\in\{0,1,\dots,m-1\}$.
\end{lemma}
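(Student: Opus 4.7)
The plan is to show that the sets $T^i X_m$ are characterized intrinsically as the ``clopen $T^{nm}$-transitive components'' of $X$, so that any element of $\Aut(T^{nm})$ must permute them as blocks.

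First, I would invoke \autoref{eigenpartition}: since $m \in \Eig(T)$, the family $(T^i X_m : 0 \leq i < m)$ is a genuine clopen partition of $X$, and each $T^i X_m$ is $T^m$-invariant, hence $T^{nm}$-invariant. The hypothesis that $T^{nm}$ acts transitively on $X_m$, together with the fact that $T^i$ conjugates $T^{nm}|_{X_m}$ to $T^{nm}|_{T^i X_m}$, upgrades this to transitivity of $T^{nm}$ on each $T^i X_m$.

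Next, I would establish the key structural fact: every clopen $T^{nm}$-invariant subset $A \subseteq X$ is a union of some of the $T^i X_m$. For each $i$, the intersection $A \cap T^i X_m$ is a clopen $T^{nm}$-invariant subset of $T^i X_m$; since $T^{nm}$ acts transitively on $T^i X_m$, any transitive point's orbit lies in either $A \cap T^i X_m$ or its relative complement, and density forces one of these to equal $T^i X_m$ (as both are closed). Thus $A \cap T^i X_m$ is either empty or all of $T^i X_m$.

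To conclude, $g \in \Aut(T^{nm})$ is a homeomorphism commuting with $T^{nm}$, so it sends clopen $T^{nm}$-invariant sets to clopen $T^{nm}$-invariant sets. By the previous step, for each $i$ I may write $g(T^i X_m) = \bigcup_{j \in S_i} T^j X_m$ for some $S_i \subseteq \{0,\dots,m-1\}$. Since $g$ is a bijection and the $T^i X_m$ partition $X$, the sets $g(T^i X_m)$ also partition $X$, so $(S_i)_{i=0}^{m-1}$ is a partition of $\{0,\dots,m-1\}$; each $S_i$ is non-empty because $T^i X_m \neq \emptyset$, and since there are $m$ non-empty sets whose union has $m$ elements, each $|S_i| = 1$. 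Setting $S_i = \{\sigma(i)\}$ yields the required $\sigma \in \Sym(m)$.

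The only step requiring genuine care is the transitivity transfer from $X_m$ to each $T^i X_m$, together with the standard ``no proper clopen invariant subsets in a transitive system'' observation; everything afterwards is a clean atoms-of-a-Boolean-algebra pigeonhole.
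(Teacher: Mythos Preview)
Your proof is correct and follows essentially the same approach as the paper: both arguments hinge on the fact that, by transitivity of $T^{nm}$ on each piece $T^i X_m$, any closed $T^{nm}$-invariant set meeting a piece with non-empty interior must contain it entirely, and then a partition/pigeonhole count forces $g$ to permute the pieces. Your framing in terms of the $T^i X_m$ being the atoms of the Boolean algebra of clopen $T^{nm}$-invariant sets is a slightly more conceptual packaging of the same core step the paper carries out directly on the sets $g(T^k X_m) \cap T^{\sigma(k)} X_m$.
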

\begin{proof}
The definition of $X_m$ ensures that $X$ is equal to the disjoint union $X_m \cup \dots \cup T^{m-1}(X_m)$.
Hence, since $g$ is a homeomorphism, $g(T^k X_m) \cap T^{\sigma(k)}(X_m)$ has non-empty interior for some $\sigma(k) \in \{0,\dots,m-1\}$.
Moreover, as $g$ commutes with $T^{nm}$, the set $g(T^k X_m) \cap T^{\sigma(k)}(X_m)$ is $T^{nm}$-invariant.
Since the hypothesis ensures that $T^{nm}$ is transitive on each $T^{\sigma(k)}(X_m)$, we deduce that
\begin{equation} \label{eq:propertiesofXk:0}
    \text{$g(T^k X_m) \supseteq g(T^k X_m) \cap T^{\sigma(k)}(X_m) \supseteq T^{\sigma(k)}(X_m)$ for any $0 \leq k < m$.}
\end{equation}
Now, being $g$ a homeomorphism, $g(X_m) \cup g(T(X_m)) \cup \dots \cup g(T^{m-1}(X_m))$ is a partition of $X$.
In view of \eqref{eq:propertiesofXk:0}, this is possible only if $g(T^k X_m) = T^{\sigma(k)}(X_m)$.
\end{proof}

Let us recall the following notation: If $G$ and $H$ are groups, then 
\begin{equation*}
    C_G(H) = \{ g \in G : gh = hg,\ \forall h \in H\}.
\end{equation*}
Also, if $n \geq 1$ and $\sigma \in \Sym(n)$, then $g_\sigma = (g_{\sigma^{-1}(0)}, g_{\sigma^{-1}(1)}, \dots, g_{\sigma^{-1}(n-1)})$ for every $g = (g_0,\dots,g_{n-1}) \in G^n$.

\begin{proposition} \label{prop:C(gamma)_as_wreathProd}
Let $(X,T)$ be a transitive system, $\gamma \in \Aut(T)$, $m \in \Eig(T)$, and $n \geq 1$ be such that $T^n$ acts transitively on $X$.
For $r \in \Eig(T)$, let $X_r$ define a cyclic partition of size $r$.
Then, there is $j \in \{0,\dots,m-1\}$ such that $\gamma(X_m) = T^j X_m$ and, if $r = \gcd(j,m)$, then $r \in \Eig(T)$ and 
\begin{equation*}
    C_{\Aut(T^{mn})}(\gamma) \cong C_{\Aut(T^{mn}|_{X_r})}(\gamma|_{X_r}) \wr \Sym(r).
\end{equation*}
More precisely, the isomorphism is given by the split exact sequence
\begin{equation} \label{eq:prop:C(gamma)_as_wreath_product:exact_seq}
\begin{tikzcd}[column sep=3ex]
    1 \arrow{r} & C_{\Aut(T^{mn}|_{X_r})}(\gamma|_{X_r})^r 
    \arrow{r}{\psi} & C_{\Aut(T^{mn})}(\gamma)
    \arrow{r}{\pi} & \Sym(r) \arrow{r} 
    \arrow[bend left=33]{l}{\rho} & 1,
\end{tikzcd}
\end{equation}
which satisfies the following properties:
\begin{enumerate}
    \item If $g \in C_{\Aut(T^{mn}|_{X_r})}(\gamma|_{X_r})^r$, $g=(g_0,\dots,g_{r-1})$, $i \in \{0,\dots,r-1\}$ and $x\in X_r$ then $\psi(g)(T^i x) = T^i g_{i}(x)$.
    \item $g(T^i X_r) = T^{\pi(g)(i)} (X_r)$ for every $g \in C_{\Aut(T^{mn})}(\gamma)$ and $i \in \{0,\dots,r-1\}$.
    \item $\rho(\sigma)(T^i x) = T^{\sigma(i)}(x)$ for all $x \in X_r$ and $i \in \{0,\dots,r-1\}$.
    \item $\rho(\sigma)^{-1} \psi(g) \rho(\sigma) = \psi(g_\sigma)$ for all $\sigma \in \Sym(r)$ and $g \in C_{\Aut(T^{mn}|_{X_r})}(\gamma|_{X_r})^r$.
\end{enumerate}
\end{proposition}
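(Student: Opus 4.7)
My plan is to construct the split exact sequence \eqref{eq:prop:C(gamma)_as_wreath_product:exact_seq} explicitly and then verify the four enumerated properties.

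The first step is to confirm that $T^{mn}$ acts transitively on $X_m$: by \autoref{transpowers}, the hypothesis that $T^n$ acts transitively on $X$ gives $\gcd(n,k)=1$ for every $k \in \CAP(T)$, and then \autoref{transitive_on_Xm_iff_coprimes} yields the claim. Thus \autoref{propertiesofXk}, applied to $\gamma \in \Aut(T) \subseteq \Aut(T^{mn})$, produces $\sigma_\gamma \in \Sym(m)$ with $\gamma(T^i X_m) = T^{\sigma_\gamma(i)} X_m$; because $\gamma$ commutes with $T$, $\sigma_\gamma$ must be the cyclic shift $i \mapsto i+j \pmod{m}$, where $j$ is determined by $\gamma(X_m) = T^j X_m$. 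Setting $r \coloneqq \gcd(j,m)$, the divisor-closedness of $\Eig(T)$ recorded just after the definition of $\Eig(T)$ gives $r \in \Eig(T)$.

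Next I would fix a cyclic partition of size $r$ that is compatible with $\gamma$. The shift $i \mapsto i+j$ on $\{0,\dots,m-1\}$ has exactly $r$ orbits, each a coset of $r\Z/m\Z$ of size $d \coloneqq m/r$, so I take $X_r \coloneqq \bigcup_{k=0}^{d-1} T^{kr}(X_m)$. An index calculation shows that $X_r$ is clopen, $T^r$-invariant, partitions $X$ into the $r$ pieces $X_r, T X_r, \dots, T^{r-1} X_r$, and is preserved by $\gamma$ (since $r \mid j$, so $\gamma$ just permutes the summands in the definition of $X_r$). For any $g \in C_{\Aut(T^{mn})}(\gamma)$, a second use of \autoref{propertiesofXk} gives $\sigma_g \in \Sym(m)$ with $g(T^i X_m) = T^{\sigma_g(i)} X_m$, and the identity $g \gamma = \gamma g$ forces $\sigma_g$ to commute with shift-by-$j$, hence to permute the $r$ orbits. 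Calling the induced permutation $\pi(g) \in \Sym(r)$ one obtains property (2), namely $g(T^i X_r) = T^{\pi(g)(i)} X_r$.

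I would then construct the remaining two maps. For the section, I define $\rho(\sigma)(T^i x) \coloneqq T^{\sigma(i)} x$ for $x \in X_r$ and $0 \leq i < r$: clopenness of $X_r$ makes this a well-defined piecewise-translation homeomorphism of $X$, it commutes with $T^{mn}$ by construction, and with $\gamma$ because $\gamma(X_r) = X_r$; hence $\rho$ lands in $C_{\Aut(T^{mn})}(\gamma)$ and satisfies $\pi\circ\rho = \mathrm{id}$, giving property (3). For $\psi$, I use property (1) as the definition; injectivity is immediate, and to see $\mathrm{im}(\psi) = \ker(\pi)$ I decompose any $g \in \ker(\pi)$ as $g = \psi(g_0,\dots,g_{r-1})$ with $g_i \coloneqq T^{-i}\,g\,T^i\big|_{X_r}$, noting that each $g_i$ inherits the commutation with $T^{mn}|_{X_r}$ and with $\gamma|_{X_r}$ from the corresponding identities for $g$. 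Property (4) then reduces to a direct index chase on a point $T^i x \in T^i X_r$, matching the $g_\sigma$ convention of Subsection \ref{subsection:wreath}.

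The main obstacle, I expect, is choosing the cyclic partition $X_r$ correctly: the one produced by \autoref{CycPart2divisor} (namely $X_m \cup T X_m \cup \dots \cup T^{d-1} X_m$) is not $\gamma$-invariant in general, so the argument breaks down if one uses it; replacing it by the union over a single $\gamma$-orbit of $X_m$ is essential, and verifying that this particular set still defines a genuine cyclic partition of size $r$ with the required clopen and $T^r$-invariance properties is the delicate technical point on which the whole splitting depends. Everything after that is a routine verification about piecewise-translation homeomorphisms and their centralizers.
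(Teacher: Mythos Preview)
Your proposal is correct and follows essentially the same route as the paper: the same choice $X_r = \bigcup_{k=0}^{d-1} T^{kr}(X_m)$ (which the paper writes equivalently as $\bigcup_{k\in\Z} T^{kj}(X_m)$), the same constructions of $\pi$, $\rho$, and $\psi$, and the same index-chase for the wreath action. Your identification of the choice of $X_r$ as the delicate point is apt---the paper handles it exactly as you do, by taking the union over a single $\gamma$-orbit of the $X_m$-pieces rather than the naive set from \autoref{CycPart2divisor}.
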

\begin{proof}
We start with some general observations.
\autoref{eigenpartition} states that there is a set $X_m \subseteq X$ such that
\begin{equation} \label{eq:prop:C(gamma)_as_wreath_product:cyclic_part}
\text{$X_m$ is closed, $T^m$-invariant and $\bigcup_{k=0}^{m-1} T^k(X_m)$ is a partition of $X$.}
\end{equation}
Moreover, by \autoref{transitive_on_Xm_iff_coprimes}, $T^m$ acts transitively on each $T^k(X_m)$.
We use \autoref{propertiesofXk} to obtain a permutation $\eta  \in \Sym(m)$ such that  $\gamma(T^k(X_m)) = T^{\eta(k)}(X_m)$ for every $k \in \{0,\dots,m-1\}$.
Then, as $\gamma$ commutes with $T$, setting $j = \eta(0)$ yields
\begin{equation} \label{eq:prop:C(gamma)_as_wreath_product:gamma_permutes_nicely}
\gamma^k(X_m) = T^{kj}(X_m)
\ \text{for all $k \in \Z$.}
\end{equation}
Let $r = \gcd(j,m)$ and note that, since $r$ divides $m$, $r \in \Eig(T)$.
We set $X_r = \bigcup_{k\in\Z} T^{kj} (X_m)$.
Observe that Equations \eqref{eq:prop:C(gamma)_as_wreath_product:cyclic_part} and \eqref{eq:prop:C(gamma)_as_wreath_product:gamma_permutes_nicely} ensure that 
\begin{equation} \label{eq:prop:C(gamma)_as_wreath_product:defi_Xr}
    \text{$X_r$ is closed, $T^r$- and $\gamma$-invariant, and $\bigcup_{k=0}^{r-1} T^k(X_r)$ is a partition of $X$.}
\end{equation}
This permits to use \autoref{transitive_on_Xm_iff_coprimes} and deduce that 
\begin{equation} \label{eq:prop:C(gamma)_as_wreath_product:trans_action}
\text{$T^r$ acts transitively on each $T^k(X_r)$.}
\end{equation}

We now construct the exact sequence in \eqref{eq:prop:C(gamma)_as_wreath_product:exact_seq}.
\medskip

\textbf{Part 1} (Defining the map $\pi\colon C_{\Aut(T^{mn})}(\gamma) \to \Sym(r)$).
Let $g \in C_{\Aut(T^m)}(\gamma)$.
As $g$ is an automorphism of $T^m$, \autoref{propertiesofXk} gives $\sigma \in \Sym(m)$ such that $g(T^k X_m) = T^{\sigma(k)} (X_m)$ for all $k \in \{0,\dots,m-1\}$.
Then, as $g$ commutes with $\gamma$, we have that $g(\gamma(T^k X_m)) = \gamma(g(T^kX_m))$ for any $k \in \{0,\dots,m-1\}$.
We can use \eqref{eq:prop:C(gamma)_as_wreath_product:gamma_permutes_nicely} to manipulate each side of this equality as follows:
\begin{equation*}
g(\gamma(T^k X_m)) = 
g(T^{k+j} X_m) = T^{\sigma(k+j)} X_m
\end{equation*}
and 
\begin{equation*}
T^{\sigma(k) + j} X_m = 
\gamma(T^{\sigma(k)} X_m) =
\gamma(g(T^k X_m)).
\end{equation*}
We deduce that $\sigma(k+j) = \sigma(k) + j \pmod{m}$ for all $k \in \{0,\dots,m-1\}$.
This implies that 
\begin{equation*}
g(T^i X_r) = g(\bigcup_{k\in\Z} T^{i+kj} (X_m)) = 
\bigcup_{k\in\Z} T^{\sigma(i)+kj} (X_m) = T^{\sigma(i)} (X_r).
\end{equation*}
We conclude that $g$ permutes the elements of the set $\{ T^i (X_r) : i \in \{0,\dots,r-1\}\}$.
We define $\pi(g)$ as the element of $\Sym(r)$ satisfying $g(T^i X_r) = T^{\pi(g)(i)} (X_r)$ for all $i \in \{0,\dots,r-1\}$.
It not difficult to see that $\pi$ is a group-morphism.
\medskip

\textbf{Part 2} (Defining the map $\rho\colon \Sym(r) \to C_{\Aut(T^{mn})}(\gamma)$).
Let $\sigma \in \Sym(r)$.
Thanks to \eqref{eq:prop:C(gamma)_as_wreath_product:defi_Xr}, we can define $\rho(\sigma)\colon X \to X$ by $\rho(\sigma)(T^i x) = T^{\sigma(i)} (x)$ for $x \in X$ and $i \in \{0,\dots,r-1\}$.
It is clear that $\rho(\sigma) \in \Aut(T^m)$.
Moreover, since each $T^i(X_r)$ is $\gamma$-invariant and since $\gamma$ commutes with $T$, we have that $\rho(\sigma)$ commutes with $\gamma$.
Thus, $\rho$ maps $\Sym(r)$ into $C_{\Aut(T^m)}(\gamma)$.
It is then easy to see that $\rho$ is a group-morphism such that $\pi(\rho(\sigma)) = \sigma$ for all $\sigma \in \Sym(r)$.
In particular, $\pi$ is onto.
That $\rho$ is injective is a direct consequence of its definition.
\medskip

\textbf{Part 3} (Defining the map $\psi\colon C_{\Aut(T^{mn}|_{X_r})}(\gamma|_{X_r})^r \to C_{\Aut(T^{mn})}(\gamma)$).
Each of the sets $T^i (X_r)$ is $\gamma$-invariant and $T^m$-invariant (the latter being true because $r$ divides $m$), so $\tilde{\gamma} \coloneqq \gamma|_{X_r}$ and $\tilde{T} \coloneqq T^m$ are homeomorphisms of $X_r$.
We define $G = C_{\Aut(\tilde{T})}(\tilde{\gamma})$ and, for $g = (g_0,\dots,g_{r-1}) \in G^r$, the map $\psi(g)$ by $\psi(g)(T^i x) = T^i g_i(x)$ for $x \in X_r$ and $i \in \{0,\dots,r-1\}$.
By \eqref{eq:prop:C(gamma)_as_wreath_product:defi_Xr}, $\psi(g)$ is well-defined and a homeomorphism of $X$.
Note that $\psi(g)$ maps $T^i (X_r)$ onto $T^i (X_r)$ for all $i \in \{0,\dots,r-1\}$; in particular, since every $g_i$ commutes with $\tilde{T}$ and $\tilde{\gamma}$, it can be checked that $\psi(g)$ commutes with $T^m$ and $\gamma$.
Therefore, $\psi$ maps $G$ into $C_{\Aut(T^m)}(\gamma)$.
A routine computation shows that $\psi$ is an injective group-morphism.

Let us check that $\psi(G^r) = \ker \pi$.
Any $g \in \psi(G)$ fixes each $T^i (X_r)$, so $\pi(g) = 1$, that is, $g \in \ker \pi$.
Conversely, if $g \in \ker \pi$, then the maps $g_i(x) = T^{-i} g (T^i x)$, which are defined for $x \in X_r$ and $i \in \{0,\dots,r-1\}$, are homeomorphisms of $X_r$.
They also commute with $\tilde{T}$ and $\tilde{\gamma}$ as $g$ commutes with $T^m$ and $\gamma$.
Hence, $(g_0,\dots,g_{r-1}) \in G^r$.
We can then note that the definition of $\psi$ ensures that $g = \psi(g_0,\dots,g_{r-1})$, which shows that $g \in \psi(G^r)$.
This proves that $\psi(G^r) = \ker \pi$.
\medskip

We have proved that the sequence \eqref{eq:prop:C(gamma)_as_wreath_product:exact_seq} is exact, and thus that $C_{\Aut(T^m)}(\gamma)$ is isomorphic to a semi-direct product $G^r \rtimes \Sym(r)$.
Item (1) of the proposition then follows from the definitions of $\pi$ and $\psi$.

\textbf{Part 4} (The semi-direct product arising from \eqref{eq:prop:C(gamma)_as_wreath_product:exact_seq} is a wreath product).
We are left with proving Item (2) and that the action of $\Sym(r)$ on $G^r$ coincides with that of the wreath product.
Let $\sigma \in \Sym(r)$ and $g = (g_0,\dots,g_{r-1}) \in G^r$.
We write $g_\sigma = (g_{\sigma(0)}, g_{\sigma(1)}, \dots, g_{\sigma(r-1)})$.
Then, for $x \in X_r$ and $i \in \{0,\dots,r-1\}$, we have that 
\begin{multline*}
\pi(\sigma)^{-1} \psi(g) \pi(\sigma)( T^i x ) =
\pi(\sigma)^{-1} \psi(g) ( T^{\sigma(i)} x ) \\ =
\pi(\sigma)^{-1} ( T^{\sigma(i)} g_{\sigma(i)}(x) ) =
g_{\sigma(i)}(x) = \pi(g_\sigma)(T^i x).
\end{multline*}
This gives Item (2). 
It follows that the semi-direct product arising from \eqref{eq:prop:C(gamma)_as_wreath_product:exact_seq} is the wreath product $G \wr \Sym(r)$.
\end{proof}

\begin{theorem} \label{semidirectproduct}
Let $(X,T)$ be a transitive system, $m \in \Eig(T)$ and $n \geq 1$ be such that $T^n$ acts transitively on $X$.
Let $X_m$ define a cyclic partition of size $m$.
Then:
\begin{equation} \label{eq:semidirectproduct:iso}
    \Aut(T^{nm}) \cong \Aut(T^{nm}|_{X_m}) \wr \Sym(m).
\end{equation}
Moreover, we have the following isomorphisms:
\begin{enumerate}[(i)]
    \item $ \Aut(T)/\langle T^m\rangle\cong  \Aut(T^m|_{X_m})/\langle T^m|_{X_m}\rangle\times \Z/m\Z$.
    \item $ \Aut(T)/\langle T\rangle\cong  \Aut(T^m|_{X_m})/\langle T^m|_{X_m}\rangle$.
\end{enumerate}
\end{theorem}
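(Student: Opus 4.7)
The plan is to derive all three assertions from Proposition~\ref{prop:C(gamma)_as_wreathProd}. For the main isomorphism~\eqref{eq:semidirectproduct:iso}, I will apply that proposition with $\gamma = \mathrm{id}_X$. Since $\mathrm{id}(X_m) = X_m = T^0 X_m$, the parameters are $j = 0$ and $r = \gcd(0,m) = m$, and the cyclic partition appearing in the conclusion is the given $X_m$. Both centralizers collapse to the full automorphism groups ($C_{\Aut(T^{nm})}(\mathrm{id}) = \Aut(T^{nm})$, and similarly on $X_m$), so the proposition yields $\Aut(T^{nm}) \cong \Aut(T^{nm}|_{X_m}) \wr \Sym(m)$ directly.

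For (i) and (ii), I will specialize to $n = 1$ (permissible because $(X,T)$ is transitive), set $G = \Aut(T^m|_{X_m})$ and $\widetilde{T} = T^m|_{X_m}$, and work inside the wreath product $\Aut(T^m) \cong G \wr \Sym(m)$ with its explicit splitting $\psi, \rho, \pi$. The map $T$ itself satisfies $T(T^i X_m) = T^{i+1 \bmod m} X_m$, so $\pi(T)$ is the $m$-cycle $c$; since $C_{\Sym(m)}(c) = \langle c\rangle$, every $f \in \Aut(T)$ has $\pi(f) \in \langle c\rangle \cong \Z/m\Z$, and $\pi$ maps $\Aut(T)$ onto $\Z/m\Z$ with $T \mapsto c$.

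The key step will be the identification $\ker(\pi|_{\Aut(T)}) = \Delta(G) := \{\psi(h,\ldots,h) : h \in G\}$. Writing $f = \psi(h_0,\ldots,h_{m-1})$ and imposing $fT = Tf$ on each piece $T^i X_m$ forces $h_0 = h_1 = \cdots = h_{m-1}$; the only non-obvious part is the wrap-around at $i = m-1$, which reduces to $h_0 \widetilde{T} = \widetilde{T} h_0$ and holds because $\widetilde{T}$ is central in $G$. This gives a split short exact sequence
\[
 1 \longrightarrow \Delta(G) \longrightarrow \Aut(T) \xrightarrow{\ \pi\ } \Z/m\Z \longrightarrow 1
\]
with section $c^k \mapsto T^k$. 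Moreover $\langle T\rangle$ is central in $\Aut(T)$ by the definition of $\Aut(T)$, and $\Delta(G) \cap \langle T\rangle = \langle T^m\rangle = \langle \Delta(\widetilde{T})\rangle$ because $\pi(T^k) = c^k$.

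From this internal structure both (i) and (ii) are immediate. For (i), since $\Delta(G)$ and $\langle T\rangle$ commute and intersect exactly in $\langle T^m\rangle$, passing to $\Aut(T)/\langle T^m\rangle$ yields an internal direct product
\[
\Aut(T)/\langle T^m\rangle \cong (\Delta(G)/\langle \Delta(\widetilde{T})\rangle) \times (\langle T\rangle/\langle T^m\rangle) \cong \Aut(T^m|_{X_m})/\langle \widetilde{T}\rangle \times \Z/m\Z.
\]
For (ii), the second isomorphism theorem applied to $\Aut(T) = \Delta(G)\cdot \langle T\rangle$ gives $\Aut(T)/\langle T\rangle \cong \Delta(G)/(\Delta(G)\cap \langle T\rangle) \cong \Aut(T^m|_{X_m})/\langle \widetilde{T}\rangle$. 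The main obstacle is the kernel computation $\ker(\pi|_{\Aut(T)}) = \Delta(G)$, where the commutation with $T$ and the centrality of $\widetilde{T}$ in $G$ are genuinely used; everything else is formal manipulation of a central split extension.
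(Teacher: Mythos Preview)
The proposal is correct and follows essentially the same route as the paper. Both obtain the main isomorphism by applying Proposition~\ref{prop:C(gamma)_as_wreathProd} with a $\gamma$ whose centralizer is all of $\Aut(T^{nm})$ (the paper takes $\gamma = T^{nm}$, you take $\gamma = \mathrm{id}_X$; either choice gives $j=0$, $r=m$, $X_r = X_m$), and both prove (i)--(ii) by analyzing the image of $\Aut(T)$ inside $G \wr \Sym(m)$ via the explicit maps $\psi,\pi,\rho$; your identification $\ker(\pi|_{\Aut(T)}) = \Delta(G)$ together with the second isomorphism theorem is a slightly cleaner repackaging of the paper's explicit computation of $\phi(g)$ in \eqref{eq:prop:from_eigPart_to_wreathProd:description_phi_Aut(T)} and the map $\theta$.
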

\begin{proof}
We set $\gamma \coloneqq T^{nm} \in \Aut(T)$.
Note that $j \coloneqq 0$ satisfies $\gamma(X_m) = T^j(X_m)$ and that $r \coloneqq \gcd(j,m) = m$.
Therefore, \autoref{prop:C(gamma)_as_wreathProd} ensures that
\begin{multline}  \label{eq:prop:from_eigPart_to_wreathProd:base_iso}
    \Aut(T^{nm}) = 
    C_{\Aut(T^{nm})}(\gamma) \\ \cong 
    C_{\Aut(T^{nm}|_{X_r})}(\gamma|_{X_r}) \wr \Sym(r) = 
    \Aut(T^{nm}|_{X_m}) \wr \Sym(m).
\end{multline}

Let us now prove Item (i) and (ii).
We set $n = 1$ and denote $G = \Aut(T^m|_{X_m})$.
\autoref{prop:C(gamma)_as_wreathProd} describes the wreath product in \eqref{eq:prop:from_eigPart_to_wreathProd:base_iso} as an exact sequence consisting of the maps $\psi\colon G^m \to \Aut(T^m)$, $\pi\colon \Aut(T^m) \to \Sym(m)$ and $\rho\colon \Sym(m) \to \Aut(T^m)$.
Let $\phi \colon \Aut(T^n) \to G \wr \Sym(m)$ be the isomorphism that appears in \eqref{eq:prop:from_eigPart_to_wreathProd:base_iso}.
Before proving Items (i) and (ii), we describe $\phi(\Aut(T))$.

Let $g \in \Aut(T)$ and $\ell = \pi(g)(0)$.
Note that \eqref{eq:prop:from_eigPart_to_wreathProd:base_iso} states that $g(X_m) = T^\ell(X_m)$.
Hence, as $g$ commutes with $T$, $g(T^k X_m) = T^{k+\ell}(X_m)$ for all $k \in \{0,\dots,m-1\}$.
This implies that if $c \in \Sym(m)$ is the cyclic permutation $c(k) = k+1 \pmod{m}$, then $\pi(g) = c^\ell$.

Let $g' = \rho(c)^{-\ell} g$.
Being $\pi$ a left-inverse of $\rho$, we have that $g' \in \ker \pi$.
Hence, as the sequence \eqref{eq:prop:from_eigPart_to_wreathProd:base_iso} is exact, we can define $\vec{g} = (g_0,\dots,g_{m-1}) = \psi^{-1}(g')$. 
We then have $\phi(g') = (\vec{g}, 1)$, and thus that
\begin{equation} \label{eq:prop:from_eigPart_to_wreathProd:phi(g)_as_product}
    \phi(g) = \phi(\rho(c^\ell)) \phi(g') =
    (1, c^\ell) (\vec{g}, 1) = (\vec{g}, c^\ell).
\end{equation}
We now describe $\vec{g}$.
Recall that $\rho(c^\ell)(T^k x) = T^{c^\ell(k)} x$ for any $k \in \{0,\dots,m-1\}$ and $x \in X_m$.
Hence, by Item (1) of \autoref{prop:C(gamma)_as_wreathProd},
\begin{equation} \label{eq:prop:from_eigPart_to_wreathProd:tmp1}
    T^k g_k(x) = g'(T^k x) =
    \rho(c^\ell)^{-1}(g(T^k x)) =
    T^{-c^\ell(k)} g(T^k x),
\end{equation}
for all $k \in \{0,\dots,m-1\}$ and $x \in X_m$.
Now, as $g$ commutes with $T$, we can write
\begin{equation*}
    T^{-c^\ell(k)} g(T^k x) = 
    T^{-c^\ell(k) + k} g(x) = 
    T^{-c^\ell(k) + k + c^\ell(0)} g_0(x),
\end{equation*}
where in the last step we used \eqref{eq:prop:from_eigPart_to_wreathProd:tmp1}.
We deduce, by \eqref{eq:prop:from_eigPart_to_wreathProd:tmp1}, that $g_k = T^{-c^\ell(k) + c^\ell(0)} g_0$ for all $k \in \{0,\dots,m-1\}$.
Since $c^\ell(k) = k + \ell$ if $0 \leq k < m-\ell$ and $c^\ell(k) = k + \ell - m$ if $m-\ell \leq k < m$, we have that $g_k = g_0$ for $0 \leq k < m-\ell$ and $g_k = T^m g_0$ if $m-\ell \leq k < m$.
Therefore, by \eqref{eq:prop:from_eigPart_to_wreathProd:phi(g)_as_product}, the following  holds for any $g \in \Aut(T)$:
\begin{equation} \label{eq:prop:from_eigPart_to_wreathProd:description_phi_Aut(T)}
    \phi(g) = 
    ((\underset{\text{$m-\ell$ times}}{\underbrace{g_0,\dots,g_0}}, 
      \underset{\text{$\ell$ times}}{\underbrace{T^mg_0,\dots, T^m g_0}}), c^\ell),
\end{equation}
with $\ell = \pi(g)(0)$ and $g_0$ the first coordinate of $\psi^{-1}(\rho(c)^{-\ell}g)$.
With this description of $\phi(\Aut(T))$, we prove Items (2) and (3).

We define $S = T^m|_{X_m}$ and
\begin{equation*}
    \text{$\theta \colon \Aut(T)/\langle T^m\rangle \to G / \langle S\rangle \times \Z/m\Z$ by 
            $\theta(g\langle T^m\rangle ) = (g_0 \langle S\rangle, \ell)$,}
\end{equation*}
where $\ell = \pi(g)(0)$ and $g_0$ is the first coordinate of $\psi^{-1}(\rho(c)^{-\ell}g)$.
It follows from \eqref{eq:prop:from_eigPart_to_wreathProd:description_phi_Aut(T)} and standard computations that $\theta$ is well-defined and an isomorphism of groups.
In particular, Item (i) holds.
Observe that \eqref{eq:prop:from_eigPart_to_wreathProd:description_phi_Aut(T)} also implies that $\theta$ sends $\langle T \rangle / \langle T^m\rangle$ onto the subgroup $\{1\} \times \Z/m\Z$ of $G / \langle S\rangle \times \Z/m\Z$.
Therefore,
\begin{multline*}
    \Aut(T) / \langle T \rangle \cong
    (\Aut(T)/\langle T^m\rangle) / (\langle T \rangle / \langle T^m\rangle) \\ \cong
    (G / \langle S\rangle \times \Z/m\Z) / (\{1\} \times \Z/m\Z) \cong
    G / \langle S \rangle.
\end{multline*}
This proves Item (ii) and completes the proof.
\end{proof}

We now turn to proving the final theorem of this section.
It shows that, if $\Eig(T) = \CAP(T)$ holds for a transitive system $(X,T)$, then $\StabAut(T)$ decomposes in such a way that the roles of the transitive powers of $T$ and of the rational eigenvalues of $T$ are independent.
This plays a key role in Section \ref{sec:recover_eigs} for retrieving the rational spectrum of $(X,T)$ from its stabilized automorphism group.

If $(X,T)$ is a transitive system, then we denote by $M(T)$ the set of all $n \geq 1$ for which $T^n$ acts transitively on $X$.
We further define, for $F \subseteq \N$,
\begin{equation*}
    \Aut^F(T) = \bigcup_{n \in F} \Aut(T^n).
\end{equation*}
Note that $\Aut^F(T)$ is a group whenever $\lcm(n,n') \in F$ for all $n,n' \in F$.
In particular, by \autoref{transpowers},
\begin{equation*}
    \text{$\Aut^{M(T)}(S)$ is a group for all transitive systems $(X,T)$ and $(Y,S)$.}
\end{equation*}

The following lemma is used only in Section \ref{sec:recover_eigs}.

\begin{lemma} \label{M(T^q)}
Suppose that $(X,T)$ is a transitive system.
Let $q \in \Eig(T)$ and $X_q$ be the clopen set given by \autoref{eigenpartition}.
We denote $S = T^q|_{X_q}$.
Then, $M(T) \subseteq M(S)$ and $\Aut^{M(T)}(S)$ is a subgroup of $\Aut^{M(S)}(S)$.
\end{lemma}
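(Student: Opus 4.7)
The plan is to reduce both claims to the characterizations of transitive powers established earlier in this section, namely \autoref{transpowers} and \autoref{transitive_on_Xm_iff_coprimes}. Once the inclusion $M(T) \subseteq M(S)$ is established, the second assertion is immediate: by the very definition $\Aut^F(S) = \bigcup_{n \in F} \Aut(S^n)$, any inclusion $F \subseteq F'$ of index sets yields an inclusion of the corresponding unions, and since both are groups (as $M(T)$ and $M(S)$ are closed under $\lcm$ in light of \autoref{transpowers}), the inclusion is automatically a subgroup inclusion.

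So the only real content lies in proving $M(T) \subseteq M(S)$. Fix $n \in M(T)$; I want to show that $S^n = T^{qn}|_{X_q}$ acts transitively on $X_q$. The natural tool is \autoref{transitive_on_Xm_iff_coprimes} applied with $m = q$ and this particular $n$: that lemma reduces transitivity of $T^{nq}$ on $X_q$ to the coprimality condition
\begin{equation*}
    \gcd(n, k/q) = 1 \quad \text{for every } k \in \CAP(T) \text{ with } q \mid k.
\end{equation*}

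To verify this, I invoke \autoref{transpowers}: because $T^n$ acts transitively on $X$, we have $\gcd(n,k) = 1$ for every $k \in \CAP(T)$. Now for any $k \in \CAP(T)$ divisible by $q$, the integer $k/q$ is a divisor of $k$, so $\gcd(n,k/q)$ divides $\gcd(n,k) = 1$, giving $\gcd(n,k/q) = 1$ as required. Hence $n \in M(S)$, completing the proof of the inclusion $M(T) \subseteq M(S)$, and therefore of the lemma. No genuine obstacle arises here; the only point to keep straight is the bookkeeping between $\CAP(T)$ and $\CAP(S)$, which is handled transparently by \autoref{Peig:props}\ref{Peig:props:induced} encoded inside \autoref{transitive_on_Xm_iff_coprimes}.
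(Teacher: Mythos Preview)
Your proof is correct and follows essentially the same approach as the paper's: both invoke \autoref{transpowers} to get $\gcd(n,k)=1$ for all $k\in\CAP(T)$, then specialize to $k$ divisible by $q$ and apply \autoref{transitive_on_Xm_iff_coprimes} to conclude $n\in M(S)$, with the subgroup claim following immediately from the inclusion of index sets.
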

\begin{proof}
Let $n \in M(T)$.
Then, by \autoref{transpowers}, $\gcd(n,k) = 1$ for all $k \in \CAP(T)$.
In particular, $\gcd(n,k/q) = 1$ for all $k \in \CAP(T)$ such that $q$ divides $k$.
So, by \autoref{transitive_on_Xm_iff_coprimes}, $n \in M(S)$.
The second part of the lemma follows from the first one.
\end{proof}

We now prove a more general version of \autoref{semidirectproduct}.

\begin{theorem} \label{aut_as_eig&min_parts}
Let $(X,T)$ be a transitive system such that $\Eig(T) = \CAP(T)$.
For $m \in \Eig(T)$, let $X_m$ define a cyclic partition of size $m$.
Then, $\StabAut(T)$ is isomorphic to a direct limit
\begin{equation*}
    \varinjlim_{m \in \Eig(T)}
    \Aut^{M(T)}(T^m|_{X_m}) \wr \Sym(m).
\end{equation*}
\end{theorem}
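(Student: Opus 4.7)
The plan is to realize $\StabAut(T)$ as an increasing union indexed by $\Eig(T)$, apply \autoref{semidirectproduct} termwise, and then recognize this union as the asserted direct limit.

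Set $H_m := \bigcup_{k \in M(T)} \Aut(T^{km})$ for each $m \in \Eig(T)$. By the hypothesis $\Eig(T) = \CAP(T)$ together with \autoref{decompose:trans_peig}, every $n \geq 1$ factors uniquely as $n = k\ell$ with $k \in M(T)$ and $\ell \in \Eig(T)$, so $\Aut(T^n) \subseteq H_\ell$ and consequently
\[
    \StabAut(T) = \bigcup_{m \in \Eig(T)} H_m.
\]
\autoref{Peig:props} guarantees that $\Eig(T) = \CAP(T)$ is closed under $\lcm$ and divisors, making it a directed set under divisibility. Moreover, if $m \mid m'$ in $\Eig(T)$, then for every $k \in M(T)$ we have $\Aut(T^{km}) \subseteq \Aut(T^{km'}) \subseteq H_{m'}$, yielding $H_m \subseteq H_{m'}$.

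Next, fix $m \in \Eig(T)$. For each $k \in M(T)$, \autoref{semidirectproduct} (applied with $n=k$) provides an isomorphism $\Phi_k \colon \Aut(T^{km}) \xrightarrow{\sim} \Aut(T^{km}|_{X_m}) \wr \Sym(m)$. The key observation is that the underlying maps $\psi$, $\rho$, $\pi$ from \autoref{prop:C(gamma)_as_wreathProd} are defined solely in terms of how an automorphism acts on the fixed cyclic partition $(T^i X_m : 0 \leq i < m)$; in particular, their formulas do not involve $k$, and only the ``coefficient'' group $\Aut(T^{km}|_{X_m})$ depends on $k$. Consequently, for $k \mid k'$ in $M(T)$, the inclusion $\Aut(T^{km}) \subseteq \Aut(T^{k'm})$ is transported by $\Phi_k$ and $\Phi_{k'}$ to the natural inclusion $\Aut(T^{km}|_{X_m}) \wr \Sym(m) \hookrightarrow \Aut(T^{k'm}|_{X_m}) \wr \Sym(m)$. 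Passing to the directed union over $k \in M(T)$ yields an isomorphism
\[
    \Phi_m \colon H_m \xrightarrow{\sim} \Aut^{M(T)}(T^m|_{X_m}) \wr \Sym(m) =: G_m.
\]

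Finally, for $m \mid m'$ in $\Eig(T)$, transport the inclusion $H_m \hookrightarrow H_{m'}$ through $\Phi_m$ and $\Phi_{m'}$ to obtain connecting morphisms $\iota_{m,m'} \colon G_m \to G_{m'}$. Functoriality $\iota_{m',m''} \circ \iota_{m,m'} = \iota_{m,m''}$ is automatic because all three maps are induced by identity inclusions between subgroups of $\StabAut(T)$. Assembling the $\Phi_m$ then identifies $\StabAut(T) = \bigcup_{m \in \Eig(T)} H_m$ with $\varinjlim_{m \in \Eig(T)} G_m$, which is the announced isomorphism. The main obstacle in the argument is the $k$-independence of $\Phi_k$ at the level of the formulas of \autoref{prop:C(gamma)_as_wreathProd}; this is the sole verification that goes beyond formally patching together previous results, and it is carried out by reading off the geometric descriptions of $\psi$, $\pi$, $\rho$ and observing that only their target group varies with $k$.
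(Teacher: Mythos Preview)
Your proof is correct and follows essentially the same route as the paper's: both write $\StabAut(T)=\bigcup_{m\in\Eig(T)}\Aut^{M(T)}(T^m)$ via \autoref{decompose:trans_peig} and the hypothesis $\Eig(T)=\CAP(T)$, then apply \autoref{semidirectproduct} at each level and use the $k$-independence of the maps $\psi,\pi,\rho$ from \autoref{prop:C(gamma)_as_wreathProd} to glue the levelwise isomorphisms into a single isomorphism onto the direct limit. Your treatment of the connecting morphisms $\iota_{m,m'}$ is a touch more explicit than the paper's, but the arguments are otherwise the same.
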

\begin{proof}
Let us denote $G_{m,n} = \Aut(T^{nm}|_{X_m})$ for $m \in \Eig(T)$ and $n \in M(T)$.
We start by proving that
\begin{equation} \label{eq:aut_as_eig&min_parts:big_union}
    \StabAut(T) = \bigcup_{m \in \Eig(T)} \Aut^{M(T)}(T^m).
\end{equation}
Let $k \geq 1$ be arbitrary.
We use \autoref{decompose:trans_peig} to write $k = nm$, with $n \in M(T)$ and $m \in \CAP(T)$.
Then, as $n \in M(T)$, \autoref{transpowers} ensures that $\gcd(n,m/k) = 1$ for all $k \in \CAP(T)$, so $n \in M(T^m)$ by \autoref{transitive_on_Xm_iff_coprimes}.
Hence, $\Aut(T^k) \subseteq \Aut^{M(T)}(T^m)$.
Being $m \in \Eig(T)$ since $m \in \CAP(T)$ and $\Eig(T) = \CAP(T)$, $\Aut(T^k)$ is contained in the set in the right-hand side of \eqref{eq:aut_as_eig&min_parts:big_union}.
As $k$ was chosen arbitrarily, we deduce that \eqref{eq:aut_as_eig&min_parts:big_union} holds.

Next, we use \autoref{semidirectproduct} with $n \in M(T)$ and $m \in \Eig(T)$ to obtain an isomorphism
\begin{equation} \label{eq:aut_as_eig&min_parts:level_iso}
    \phi_{m,n} \colon \Aut(T^{nm}) \to G_{m,n} \wr \Sym(m).
\end{equation}
Note that if $n,n' \in M(T)$ and $n$ divides $n'$, then $G_{m,n}$ is contained in $G_{m,n'}$.
Thus, by the description of the wreath appearing in \eqref{eq:aut_as_eig&min_parts:level_iso} from \autoref{semidirectproduct}, we have that the restriction of $\phi_{m,n'}$ to $\Aut(T^{nm})$ is equal to $\phi_{m,n}$.
Therefore, there is an induced isomorphism
\begin{equation*}
    \phi_m \colon \Aut^{M(T)}(T^m) \to (\bigcup_{n\in M(T)} G_{m,n}) \wr \Sym(m).
\end{equation*}
Note that
\begin{equation*}
    G_m \coloneqq \Aut^{M(T)}(T^m|_{X_m}) = \bigcup_{n\in M(T)} G_{m,n},
\end{equation*}
so $\phi_m$ maps $\Aut^{M(T)}(T^m)$ onto $G_m \wr \Sym(m)$. 
We conclude, using \eqref{eq:aut_as_eig&min_parts:big_union}, that 
\begin{equation*}
    \StabAut(T) = 
    \bigcup_{m \in \Eig(T)} \phi_n^{-1}(G_m \wr \Sym(m)) 
    \cong 
    \varinjlim_{m \in \Eig(T)}
    G_m \wr \Sym(m).
\end{equation*}
\end{proof}

\begin{corollary}
Assume the notation and hypothesis of \autoref{aut_as_eig&min_parts}.
If \\ $\Aut^{M(T)}(T^m|_{X_m})$ is amenable for all $m\in \Eig(T)$, then $\StabAut(T)$ is amenable.
\end{corollary}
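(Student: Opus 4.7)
The plan is to combine the decomposition from \autoref{aut_as_eig&min_parts} with the standard closure properties of the class of amenable groups. Recall that the class of amenable groups is closed under (i) taking finite direct products, (ii) forming extensions (so in particular, semidirect products $A \rtimes B$ with both $A$ and $B$ amenable are amenable), and (iii) taking directed unions (equivalently, direct limits in the category of groups).

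By hypothesis, $H_m \coloneqq \Aut^{M(T)}(T^m|_{X_m})$ is amenable for every $m \in \Eig(T)$. Since $m$ is finite, the direct power $H_m^m$ is amenable by closure under finite direct products. The wreath product
\begin{equation*}
    H_m \wr \Sym(m) = H_m^m \rtimes \Sym(m)
\end{equation*}
is then amenable as an extension of the finite (hence amenable) group $\Sym(m)$ by the amenable group $H_m^m$.

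Finally, by \autoref{aut_as_eig&min_parts},
\begin{equation*}
    \StabAut(T) \cong \varinjlim_{m \in \Eig(T)} H_m \wr \Sym(m),
\end{equation*}
and an inspection of the proof of that theorem shows that this direct limit is realized concretely as the directed union $\bigcup_{m\in\Eig(T)} \phi_m^{-1}(H_m \wr \Sym(m))$ inside $\StabAut(T)$, where the index set is directed by divisibility (using that $\Eig(T)$ is closed under $\lcm$ by \autoref{Peig:props}). A directed union of amenable subgroups of a given group is amenable, so $\StabAut(T)$ is amenable.

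No step is really a substantial obstacle here; the only point requiring a line of justification is that the index $\Eig(T)$ is a directed set, which is immediate from \autoref{Peig:props}\ref{Peig:props:lattice}, and that the transition maps in the direct limit are inclusions, which was already established inside the proof of \autoref{aut_as_eig&min_parts}.
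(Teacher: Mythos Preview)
Your proof is correct and follows essentially the same approach as the paper: use closure of amenability under extensions to show each wreath product $H_m \wr \Sym(m)$ is amenable, then use closure under direct limits to conclude for $\StabAut(T)$. Your version simply spells out a few more details (the finite direct product step and the directedness of $\Eig(T)$) than the paper's brief proof.
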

\begin{proof}
This is a consequence of two well known properties of amenable groups. 
First, if a group $B$ is an extension of $C$ by $A$ and the groups $A$ and $C$ are amenable, then $B$ is amenable.
This property implies that $\Aut^{M(T)}(T^m|_{X_m}) \wr \Sym(m)$ is amenable for all $m\in \Eig(T)$. 
The second property is that amenability is preserved under direct limits. 
\end{proof}

\section{Key algebraic properties of wreath products} \label{section:algebra}

The objective of this section is to prove two technical algebraic results: Theorem \ref{prop:main_alg_lemma} and Proposition \ref{prop:main_alg_lemma2}.
They state that a wreath-product-like structure of a group is preserved, to some extent, by group isomorphisms.
These two results are crucial for recovering the rational eigenvalues of a system from its stabilized automorphism group in Section \ref{sec:recover_eigs}.

Let us start by introducing the necessary notation.
We fix, for the rest of the subsection, a group $G$ and $n \geq 2$.
Recall that $\Sym(n)$ acts on $G^n$ by permuting coordinates.
More precisely, if $g = (g_0,\dots,g_{n-1})$ and $\sigma \in \Sym(n)$, then we consider the left-action $g_\sigma = (g_{\sigma^{-1}(0)}, \dots, g_{\sigma^{-1}(n-1)})$.
The wreath product $G \wr \Sym(n)$ is the semi-direct product $G^n \rtimes \Sym(n)$ with operation defined by
\begin{equation} \label{defi_wreath_product}
    (g,\sigma) (h, \tau) = (g_{\tau^{-1}} h, \sigma \tau) \ 
    \text{for $g,h \in G^n$ and $\sigma, \tau \in \Sym(n)$.}
\end{equation}
In $G \wr \Sym(n)$, we define
\begin{equation*}
\text{$(h,\tau)^{(g,\sigma)} \coloneqq (g,\sigma) (h,\tau) (g,\sigma)^{-1}$ and $[(g,\sigma), (h,\tau)] \coloneqq (g,\sigma) (h,\tau) (g,\sigma)^{-1} (h,\tau)^{-1}$. }
\end{equation*}
If $H$ is a subgroup of $G \wr \Sym(n)$, we say that $(g,\sigma), (h,\tau) \in G \wr \Sym(n)$ are {\em conjugate in $H$} if there is $(k,\eta) \in H$ such that $(g,\sigma)^{(k,\eta)} = (h,\tau)$.
When $H = G \wr \Sym(n)$, we simply say that $(g,\sigma), (h,\tau) \in G \wr \Sym(n)$ are {\em conjugate}.

The following lemma summarizes the straightforward computations of inverses, conjugates, and commutators of elements of $G \wr \Sym(n)$.
\begin{lemma} \label{lem:wreath_prod_formulas}
Let $(g,\sigma), (h,\tau) \in G \wr \Sym(n)$.
Then:
\begin{enumerate}[(i)]
    \item \label{lem:wreath_prod_formulas:inverse}
    $(g,\sigma)^{-1} = (g^{-1}_{\sigma}, \sigma^{-1})$.
    \item \label{lem:wreath_prod_formulas:conjugation}
    $(h,\tau)^{(g,\sigma)} = (g_{\tau^{-1}\sigma} h_\sigma g^{-1}_\sigma, \sigma\tau\sigma^{-1})$.
    \item \label{lem:wreath_prod_formulas:commutator}
    $ [(g,\sigma), (h,\tau)] = (g_{\tau^{-1}\sigma\tau} h_{\sigma\tau} g^{-1}_{\sigma\tau} h^{-1}_\tau, \sigma\tau\sigma^{-1}\tau^{-1}). $
\end{enumerate}
\end{lemma}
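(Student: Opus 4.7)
Each of the three identities is a routine expansion using the defining operation $(g,\sigma)(h,\tau)=(g_{\tau^{-1}}h,\sigma\tau)$ from \eqref{defi_wreath_product}, together with two facts about the $\Sym(n)$-action on $G^n$: (a) for each $\sigma\in\Sym(n)$, the map $g\mapsto g_\sigma$ is a group automorphism of $G^n$, so in particular $(ab)_\sigma=a_\sigma b_\sigma$ and $(g^{-1})_\sigma=(g_\sigma)^{-1}$ (justifying the unambiguous notation $g^{-1}_\sigma$); and (b) the action composes consistently with permutation composition, so that iterated subscripts of the form $(g_\alpha)_\beta$ collapse to a single subscript.

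\textbf{Part (i).} I would verify directly that $(g^{-1}_\sigma,\sigma^{-1})$ is a two-sided inverse of $(g,\sigma)$. Applying \eqref{defi_wreath_product},
\[
(g,\sigma)(g^{-1}_\sigma,\sigma^{-1})=(g_{(\sigma^{-1})^{-1}}g^{-1}_\sigma,\,\sigma\sigma^{-1})=(g_\sigma g^{-1}_\sigma,1)=(1,1),
\]
and the reverse product collapses similarly using (b). This gives (i).

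\textbf{Part (ii).} I would expand $(h,\tau)^{(g,\sigma)}=(g,\sigma)(h,\tau)(g,\sigma)^{-1}$ by applying \eqref{defi_wreath_product} twice, substituting $(g,\sigma)^{-1}$ from part (i). The first application gives $(g_{\tau^{-1}}h,\sigma\tau)$; multiplying by $(g^{-1}_\sigma,\sigma^{-1})$ produces a first coordinate of the form $(g_{\tau^{-1}}h)_\sigma\cdot g^{-1}_\sigma$, which using (a) splits as $(g_{\tau^{-1}})_\sigma\cdot h_\sigma\cdot g^{-1}_\sigma$. The key step is simplifying the iterated subscript $(g_{\tau^{-1}})_\sigma$ via (b), which yields the single subscript appearing in the claimed formula. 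The second coordinate is immediately $\sigma\tau\sigma^{-1}$.

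\textbf{Part (iii).} Here I would write $[(g,\sigma),(h,\tau)]=(h,\tau)^{(g,\sigma)}\cdot(h,\tau)^{-1}$, substitute the formulas from (i) and (ii), and apply \eqref{defi_wreath_product} one more time. The second coordinate is $\sigma\tau\sigma^{-1}\cdot\tau^{-1}$, the usual commutator in $\Sym(n)$. For the first coordinate, I would multiply the first coordinate of (ii) by the appropriately-permuted version of $h^{-1}_\tau$ dictated by \eqref{defi_wreath_product}, and then collapse the nested subscripts via (b) to arrive at the stated expression.

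\textbf{Expected obstacle.} There is no conceptual difficulty; the only thing requiring care is bookkeeping for the iterated action, that is, consistently applying the composition rule from (b) so that subscripts like $((g_\alpha)_\beta)_\gamma$ are reduced in the correct order. Once this convention is fixed by \eqref{defi_wreath_product}, all three identities follow mechanically.
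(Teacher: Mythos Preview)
Your proposal is correct and matches the paper's treatment: the paper does not give a proof of this lemma at all, describing it as ``straightforward computations'' and stating the formulas without argument. Your plan---verifying each identity by direct expansion from \eqref{defi_wreath_product}, using that $g\mapsto g_\sigma$ is a group automorphism and that iterated subscripts collapse via the action's composition rule---is exactly the routine verification the paper is implicitly leaving to the reader.
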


One of the central properties that we use for proving Theorem \ref{prop:main_alg_lemma} and Proposition \ref{prop:main_alg_lemma2} is that symmetric groups have a very rigid normal subgroup structure.
This structure is summarized in \autoref{lem:symmetric_groups_structure} below.
For $m \geq 1$, we denote by $A_m$ the alternating group on $m$ elements.

\begin{lemma} \label{lem:symmetric_groups_structure}
Let $m \geq 1$.
The normal subgroups of $\Sym(m)$ are $\{1\}$, $A_m$, $\Sym(m)$, and, if $m = 4$, we also have the normal subgroup $V$ of $A_4$ that is isomorphic to the Klein four-group.
The normal subgroups of $A_m$ are $\{1\}$, $A_m$ and, if $m = 4$, $V$.
\end{lemma}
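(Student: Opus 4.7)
The plan is to prove the lemma by separating the cases $m \leq 4$ (handled directly) from $m \geq 5$ (where simplicity of $A_m$ does the work).

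For small $m$, I would just inspect the groups. The cases $m \in \{1, 2\}$ are trivial since $\Sym(1)$ is trivial and $\Sym(2) \cong \Z/2\Z$ has only the obvious normal subgroups. For $m = 3$, the group $A_3$ is cyclic of prime order, hence simple, and in $\Sym(3)$ any normal subgroup intersects $A_3$ in $\{1\}$ or $A_3$; a direct check rules out nontrivial normal subgroups of order $2$. For $m = 4$, I would exhibit $V = \{1, (12)(34), (13)(24), (14)(23)\}$ (the Klein four-group sitting inside $A_4$) and verify by hand that $V$ is normal both in $A_4$ and in $\Sym(4)$ (conjugation permutes the three double transpositions). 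The subgroup lattice of $\Sym(4)$ and $A_4$ is small enough to confirm these are the only normal subgroups.

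For $m \geq 5$, the main input is the classical theorem that $A_m$ is simple. I would assume this and deduce the rest. First, the normal subgroups of $A_m$ are exactly $\{1\}$ and $A_m$. Then, for $\Sym(m)$, suppose $N \triangleleft \Sym(m)$ is nontrivial. Since $A_m$ is normal in $\Sym(m)$, the intersection $N \cap A_m$ is normal in $A_m$, so it equals $\{1\}$ or $A_m$. If $N \cap A_m = A_m$, then $A_m \subseteq N$, and since $[\Sym(m) : A_m] = 2$, we get $N = A_m$ or $N = \Sym(m)$. If instead $N \cap A_m = \{1\}$, then $N$ embeds into $\Sym(m)/A_m \cong \Z/2\Z$, so $|N| \leq 2$, meaning $N$ is generated by an odd element of order $2$, necessarily central in $\Sym(m)$; but the center of $\Sym(m)$ is trivial for $m \geq 3$ (any non-identity permutation has some non-fixed point that can be moved by a well-chosen conjugation), contradiction.

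I do not expect any real obstacle: the only substantive ingredient is the simplicity of $A_m$ for $m \geq 5$, which is standard and would be cited rather than reproved. The rest is bookkeeping about the $m \leq 4$ cases and the index-$2$ argument. Given that this lemma is entirely classical, I would likely present it briefly and reference a textbook (e.g.\ Rotman, \emph{An Introduction to the Theory of Groups}) for the simplicity of $A_m$ and the structure of $\Sym(4)$ and $A_4$.
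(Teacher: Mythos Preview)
Your proposal is correct and follows the standard textbook approach to this classical result. However, the paper does not actually give a proof of this lemma: it is stated as a known structural fact about symmetric and alternating groups and then used without further justification. So there is no ``paper's own proof'' to compare against; your sketch simply supplies the argument the authors chose to omit, and your closing remark that one would cite a reference rather than reprove simplicity of $A_m$ is exactly in line with how the paper treats it.
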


The following two lemmas detail the consequences of the rigidity expressed in Lemma \ref{lem:symmetric_groups_structure}.

\begin{lemma} \label{lem:subgroups_with_=_card_sym_groups}
Let $m \geq 2$ and $L, L' \trianglelefteq K \trianglelefteq \Sym(m)$.
Assume that $|L| = |L'|$ and that $K$ is not the Klein group $V$.
Then, $L = L'$.
\end{lemma}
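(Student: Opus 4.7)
The plan is to perform a case analysis on $K$ using \autoref{lem:symmetric_groups_structure}, and verify that in each allowed case the normal subgroups of $K$ are distinguished by their cardinalities. Since $K \trianglelefteq \Sym(m)$ and $K \neq V$, the lemma forces $K \in \{\{1\}, A_m, \Sym(m)\}$, so there are only three subcases to check.

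The case $K = \{1\}$ is trivial. For $K = A_m$ with $m \neq 4$, the only normal subgroups are $\{1\}$ and $A_m$, of orders $1$ and $m!/2$, which are distinct. For $K = A_4$, we must also include the Klein subgroup $V$ of order $4$, but the three candidate orders $1, 4, 12$ remain distinct. For $K = \Sym(m)$ with $m \neq 4$, the normal subgroups are $\{1\}, A_m, \Sym(m)$ with orders $1, m!/2, m!$, all distinct for $m \geq 2$ (noting that when $m = 2$ one has $A_2 = \{1\}$, so there are only two subgroups). For $K = \Sym(4)$, we additionally have $V$, and the orders $1, 4, 12, 24$ are pairwise distinct. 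In every case $|L| = |L'|$ then forces $L = L'$.

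There is no real obstacle; the only subtle point is to ensure one has the complete list of normal subgroups of $A_m$ and $\Sym(m)$ supplied by \autoref{lem:symmetric_groups_structure} and to confirm the cardinality calculation for the exceptional value $m = 4$. It is also worth remarking, as a sanity check on the hypothesis, why the excluded case $K = V$ genuinely fails: $V$ is abelian of order $4$ and contains three distinct subgroups of order $2$, each of which is normal in $V$ (and in $\Sym(4)$), so one can easily produce $L \neq L'$ with $|L| = |L'| = 2$.
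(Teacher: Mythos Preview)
Your proof is correct and follows the same approach as the paper: enumerate the normal subgroups of $K$ via \autoref{lem:symmetric_groups_structure} and observe that they are distinguished by cardinality (the paper compresses your case split by noting once that $K \neq \{1\}, V$ forces $K \supseteq A_m$, so the normal subgroups of $K$ lie among $\{1\}, V, A_m, \Sym(m)$ in one stroke). One small slip in your closing sanity check: the order-$2$ subgroups of $V$ are normal in $V$ (which is all you need for the counterexample) but \emph{not} in $\Sym(4)$, since conjugation in $\Sym(4)$ permutes the three double transpositions; this does not affect your main argument.
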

\begin{proof}
If $K = \{1\}$, then $L = L' = \{1\}$ and the proof is complete.
Let us assume that $K \not= \{1\}$.
Then, as $K \not= V$, \autoref{lem:symmetric_groups_structure} ensures that $K \supseteq A_m$.
This implies, by \autoref{lem:symmetric_groups_structure}, that the only normal subgroups of $K$ can be $\{1\}$, $V$, $A_m$ and $\Sym(m)$.
Observe that any pair of these groups either are equal or have different cardinalities.
Since $|L| = |L'|$, we deduce that $L = L'$.
\end{proof}

\begin{lemma} \label{lem:LisTransIfHasComplement}
Let $L \leq \Sym(n)$ and $K \trianglelefteq \Sym(n)$ be such that
\begin{equation*}
    L\cdot K \coloneqq \{gh : g \in L, h \in K\} = \Sym(n).
\end{equation*}
Then, $K \supseteq A_n$ or $L$ acts transitively on $\{0,1,\dots,n-1\}$.
\end{lemma}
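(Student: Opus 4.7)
The plan is to carry out a case analysis on $K$, using the rigid classification of normal subgroups of $\Sym(n)$ provided by \autoref{lem:symmetric_groups_structure}. That lemma restricts $K$ to be one of $\{1\}$, $A_n$, $\Sym(n)$, or, only when $n=4$, the Klein four-group $V$, so the entire argument is reduced to four sub-cases.

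The cases $K \in \{A_n, \Sym(n)\}$ and $K = \{1\}$ are immediate and together handle everything except $n=4, K=V$. Indeed, if $K \supseteq A_n$ (i.e.\ $K = A_n$ or $K = \Sym(n)$), then the first disjunct of the conclusion holds directly. If $K = \{1\}$, then the hypothesis $L \cdot K = \Sym(n)$ collapses to $L = \Sym(n)$, which clearly acts transitively on $\{0,\dots,n-1\}$, so the second disjunct holds.

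The only substantive case is $n = 4$ and $K = V$. Here the plan is to exploit the quotient isomorphism $\Sym(4)/V \cong \Sym(3)$: the hypothesis $L \cdot V = \Sym(4)$ is equivalent to the projection $L \to \Sym(4)/V$ being surjective, so $|L|/|L\cap V| = 6$. I would then combine this surjection with the fact that $V$ itself acts transitively on $\{0,1,2,3\}$ (all three non-identity elements are fixed-point-free involutions) to analyze the possible orbit structures of $L$ on $\{0,1,2,3\}$. A natural line of attack is to pick a coset representative in $L$ whose image in $\Sym(3)$ is a $3$-cycle; any such representative must itself be a $3$-cycle in $\Sym(4)$, which, together with other coset representatives that project to transpositions, should be used to rule out that $L$ preserves a non-trivial partition of $\{0,1,2,3\}$.

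The main obstacle is precisely this $V$-case. The subtlety is that $V$ is already transitive on $\{0,1,2,3\}$, so surjectivity of $L \to \Sym(4)/V$ alone does \emph{not} transfer transitivity to $L$: a careful orbit/pigeonhole analysis, using the very specific form of $V$ and the explicit list of coset representatives, is needed to exclude the remaining non-transitive possibilities for $L$. Every other part of the proof is bookkeeping driven by \autoref{lem:symmetric_groups_structure}; the entire real content lies in this single sub-case.
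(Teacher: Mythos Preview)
Your case decomposition matches the paper's exactly: reduce via \autoref{lem:symmetric_groups_structure} to $K \in \{\{1\}, A_n, \Sym(n)\}$ or $(n,K)=(4,V)$; the first three cases are immediate, and all the content lies in the $V$-case. There the paper argues, much as you propose, that $L$ meets every coset of $V$, so in particular $L$ contains an element from each of the two cosets consisting entirely of $3$-cycles (precisely the cosets projecting to $3$-cycles in $\Sym(4)/V\cong\Sym(3)$), and then asserts that possessing two distinct $3$-cycles forces $L$ to be transitive.

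Neither your outline nor the paper's argument can be completed, however, because the lemma is \emph{false} in the $V$-case. Take $n=4$, $K=V$, and $L=\operatorname{Stab}(3)=\{1,(0\ 1),(0\ 2),(1\ 2),(0\ 1\ 2),(0\ 2\ 1)\}$. Every non-identity element of $V$ moves all four points, so $L\cap V=\{1\}$; since $V$ is normal, $L\cdot V$ is a subgroup of order $|L|\cdot|V|=24$, hence $L\cdot V=\Sym(4)$. Yet $L$ fixes $3$ and is not transitive. (This is just the standard splitting $\Sym(4)\cong V\rtimes\Sym(3)$ with $\Sym(3)$ realized as a point stabilizer.) The paper's final implication ``$L$ contains two different $3$-cycles, hence $L$ is transitive'' fails for exactly this $L$, which contains $(0\ 1\ 2)$ and $(0\ 2\ 1)$; your proposed use of the transposition-coset representatives fails for the same reason, since those representatives in $L$ are $(0\ 1),(0\ 2),(1\ 2)$, all fixing $3$. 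Your instinct that this sub-case is the real obstacle was correct---it is in fact an actual counterexample rather than a mere subtlety.
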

\begin{proof}
Suppose that $K$ does not contain $A_n$ and let us prove that $L$ acts transitively on $\{0,1,\dots,n-1\}$.
We have, by \autoref{lem:symmetric_groups_structure}, that either $K = \{1\}$ or $n = 4$ and $K$ is equal to the normal subgroup of $A_4$ that is isomorphic to the Klein group $V$.

In the first case, we have that $L = \Sym(n)$ and thus the lemma is true.
Let us then consider the second case.
We use cycle notation for permutations, that is, if $a_1,\dots,a_k$ are different elements of $\{0,\dots,n-1\}$, then $(a_1\ a_2\ \dots\ a_k)$ denotes the permutation $\sigma$, where $\sigma(a_j) = a_{j+1}$ for $1 \leq j < k$, $\sigma(a_k) = a_1$, and $\sigma(a) = a$ for $a \not\in \{a_1,\dots,a_k\}$.
As we are in the second case, we can write:
\begin{equation*}
    K = V = \{1, (0\ 1)(2\ 3), (0\ 2)(1\ 3), (0\ 3)(1\ 2)\}.
\end{equation*}
Hence, the following sets are cosets of $K$ in $\Sym(4)$:
\begin{equation*}
    \{(0\ 1\ 2), (0\ 2\ 3), (1\ 3\ 2), (0\ 3\ 1)\}
    \ \text{and} \
    \{(0\ 2\ 1), (0\ 3\ 2), (1\ 2\ 3), (0\ 1\ 3)\}.
\end{equation*}
Now, since $L \cdot K = \Sym(4)$, $L$ contains at least one element of each coset of $K$.
It follows that $L$ contains two different cycles of length $3$.
This implies that $L$ acts transitively on $\{0,1,2,3\}$.
\end{proof}

The proof of the main results of this section is based on examining certain normal subgroups of $G \wr \Sym(n)$ and prove that they have an specific structure.
To achieve this, our main tool is \autoref{lem:fix_by_null}, which gives a method for obtaining new elements in a given normal subgroup $H$ of $G \wr \Sym(n)$ from other elements of $H$.

It is convenient to first introduce the following notation.
If $\sigma \in \Sym(n)$, then the {\em $\sigma$-orbit} of $i \in \{0,1,\dots,n-1\}$ is the set $p = \{\sigma^k(i) : k \in \Z\}$.
Remark that any $\sigma$-orbit $p$ has finite cardinality $|p|$ and that $\sigma^{-|p|}(j) = j$ for all $j \in p$.
If $j \in p$ and $g \in G^n$, then we define $c_{\sigma}(g,j) = g_{\sigma^{-|p|+1}(j)} g_{\sigma^{-|p|+2}(j)} \cdots g_j$.

Before proving \autoref{lem:fix_by_null}, we analyze the combinatorics of conjugating by elements of $G^n \times \{1\}$.

\begin{lemma}\label{lem:fix_by_c}
Let $(g,\sigma), (h,\sigma) \in G \wr \Sym(n)$ and, for each $\sigma$-orbit $p$, let $j_p \in p$.
Assume that
\begin{equation}
\text{$c_{\sigma}(g,j_p) = c_{\sigma}(h,j_p)$ for all $\sigma$-orbit $p$.}
\end{equation}
Then, $(h,\sigma)$ is conjugate to $(g,\sigma)$ in $G^n \times \{1\}$.
\end{lemma}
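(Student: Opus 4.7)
The plan is to reduce the conjugation problem to a coordinate-wise recursion running along each $\sigma$-orbit, and then verify that the hypothesis $c_\sigma(g,j_p) = c_\sigma(h,j_p)$ is exactly the closure condition that makes the recursion well-defined. Writing out $(k,1)(g,\sigma)(k,1)^{-1}$ via \autoref{lem:wreath_prod_formulas}, the identity $(k,1)(g,\sigma)(k,1)^{-1} = (h,\sigma)$ is equivalent to the system
\begin{equation*}
    k_{\sigma(i)} = h_i\, k_i\, g_i^{-1}, \qquad i\in\{0,1,\dots,n-1\}.
\end{equation*}
Each equation couples only coordinates inside a single $\sigma$-orbit, so the problem decouples completely across orbits and it suffices to build $k$ on each orbit $p$ independently.

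Fix an orbit $p$ with basepoint $j_p$. I would set $k_{j_p} = 1_G$ and iterate the recursion around the cycle, which (by an easy induction on $r$) produces the explicit formula
\begin{equation*}
    k_{\sigma^r(j_p)} = \bigl(h_{\sigma^{r-1}(j_p)}\cdots h_{\sigma(j_p)} h_{j_p}\bigr)\bigl(g_{j_p}^{-1} g_{\sigma(j_p)}^{-1}\cdots g_{\sigma^{r-1}(j_p)}^{-1}\bigr).
\end{equation*}
Since $\sigma^{|p|}$ fixes $j_p$, the only obstruction to this defining a legitimate element of $G^n$ is the closure condition $k_{\sigma^{|p|}(j_p)} = 1_G$. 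Unpacking it yields precisely the identity
\begin{equation*}
    h_{\sigma^{|p|-1}(j_p)}\cdots h_{\sigma(j_p)} h_{j_p} = g_{\sigma^{|p|-1}(j_p)}\cdots g_{\sigma(j_p)} g_{j_p},
\end{equation*}
which is the equality $c_\sigma(g,j_p) = c_\sigma(h,j_p)$ written out along the orbit. This is exactly what the hypothesis supplies.

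With closure secured on every orbit, the tuples assemble into a single $k \in G^n$ that solves the full coordinate-wise system, and $(k,1)\in G^n\times\{1\}$ realizes the required conjugacy. The only genuine care needed is bookkeeping: aligning the left-to-right order in which $h$'s accumulate (and $g^{-1}$'s are appended) during iteration with the order in the definition of $c_\sigma(g,j_p)$. Once those conventions are matched, no deeper input is required, and the lemma is proved.
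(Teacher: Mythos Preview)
Your proposal is correct and follows the same approach as the paper: construct $k$ by solving the recursion $k_{\sigma(i)}=h_i\,k_i\,g_i^{-1}$ along each $\sigma$-orbit with initial value $k_{j_p}=1$, then check that the cycle-closure condition after one full traversal is exactly the hypothesis $c_\sigma(g,j_p)=c_\sigma(h,j_p)$. The only difference is cosmetic (you iterate in the $\sigma$ direction, the paper in the $\sigma^{-1}$ direction), and your caveat about aligning the product order with the definition of $c_\sigma$ is well placed—the paper's own write-up is equally informal on precisely that point.
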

\begin{proof}
We define an element $k = (k_0,\dots,k_{n-1}) \in G^n$ as follows.
Let $p$ be a $\sigma$-orbit.
We set $k_{j_p} = 1$ and, if $|p| \geq 2$, we inductively define 
\begin{equation} \label{eq:lem:fix_by_c:0}
    \text{$k_{\sigma^{-l-1}(j_p)} = h_{\sigma^{-l}(j_p)} k_{\sigma^{-l}(j_p)} g_{\sigma^{-l}(j_p)}^{-1}$ for $l \in \{0,\dots,|p|-2\}$.}
\end{equation}
Since every $i \in \{1,\dots,n\}$ belongs to a $\sigma$-orbit, $k$ is completely defined.
Note that, by \eqref{eq:lem:fix_by_c:0},
\begin{equation} \label{eq:lem:fix_by_c:1}
    k_{\sigma^{-l-1}(j_p)} g_{\sigma^{-l}(j_p)} k^{-1}_{\sigma^{-l}(j_p)} = h_{\sigma^{-l}(j_p)}
\end{equation}
for all $\sigma$-orbit $p$ and $l \in \{0,\dots,|p|-2\}$.
An inductive use of this relation yields
$$ k_{\sigma^{|p|-1}(j_p)} (g_{\sigma^{|p|-2}(j_p)} g_{\sigma^{|p|-3}(j_p)}\dots g_{j_p}) k_{j_p} = 
h_{\sigma^{|p|-2}(j_p)} h_{\sigma^{|p|-3}(j_p)}\dots h_{j_p}. $$
Hence, as $c_{\sigma}(g,j_p) = c_{\sigma}(h,j_p)$ and $k_{j_p} = 1$, $k_{\sigma^{|p|-1}(j_p)} g_{\sigma^{|p|-1}(j_p)}^{-1} = h_{\sigma^{|p|-1}(j_p)}^{-1}$.
This equation can be written as $k_{j_p} g_{\sigma^{|p|-1}(j_p)} k_{\sigma^{|p|-1}(j_p)}^{-1} = h_{\sigma^{|p|-1}(j_p)}$.
The last relation is valid for all $p$, so, together with \eqref{eq:lem:fix_by_c:1}, it gives that $k_{\sigma^{-1}} g k^{-1} = h$.
We conclude, using Item \ref{lem:wreath_prod_formulas:conjugation} of Lemma \ref{lem:wreath_prod_formulas}, that 
\begin{equation*}
    (h,\sigma) = (k_{\sigma^{-1}} g k^{-1}, \sigma) =
    (g,\sigma)^{(k,1)}.
\end{equation*}
\end{proof}

\begin{lemma} \label{lem:fix_by_null}
Let $H \leq G \wr \Sym(n)$, $(h,\tau) \in H$, and take $j_p \in p$ for each $\tau$-orbit $p$.
Assume that $(k,1) (h,\tau) (k,1)^{-1} \in H$ for all $k \in G^n$.
If $g = (g_0,\dots, g_{n-1}) \in G^n$ is such that $c_{\tau}(g,j_p) = 1$ for all $\tau$-orbit $p$, then $(g,1) \in H$.
\end{lemma}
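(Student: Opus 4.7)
The plan is to analyze the subgroup $N := \{u \in G^n : (u,1) \in H\}$ of $G^n$ and show that $g \in N$. From the hypothesis, for every $k \in G^n$ the element $X_k := (k,1)(h,\tau)(k,1)^{-1}$ lies in $H$; in particular $(h,\tau) = X_1 \in H$. Using Lemma~\ref{lem:wreath_prod_formulas} one computes $X_k = (k_{\tau^{-1}} h k^{-1}, \tau)$ and
\begin{equation*}
X_k X_1^{-1} = \bigl(k\, h_\tau\, k_\tau^{-1}\, h_\tau^{-1},\; 1\bigr).
\end{equation*}
Writing $w_k := k\, h_\tau\, k_\tau^{-1}\, h_\tau^{-1}$, this gives $w_k \in N$ for every $k$. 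A parallel computation shows $X_{rk} X_r^{-1} = (r\, w_k\, r^{-1},\, 1)$ (with the product interpreted componentwise in $G^n$) for any $r \in G^n$, so the subgroup of $N$ generated by the $w_k$'s is closed under componentwise conjugation by $G^n$.

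The argument then proceeds one $\tau$-orbit at a time. Fix an orbit $p = \{i_0, \ldots, i_{m-1}\}$ with $\tau(i_l) = i_{l+1 \bmod m}$ and $j_p = i_0$. When $k$ is supported on $p$ (that is, $k_i = 1$ for $i \notin p$), the element $w_k$ is also supported on $p$, with components
\begin{equation*}
(w_k)_{i_l} = b_l\, h_{i_{l-1}}\, b_{l-1}^{-1}\, h_{i_{l-1}}^{-1}, \qquad b_l := k_{i_l}.
\end{equation*}
Setting $b_{l_0} = b$ and $b_l = 1$ for $l \neq l_0$ yields an element of $N$ nontrivial only at positions $i_{l_0}$ and $i_{l_0+1}$, with values $b$ and $h_{i_{l_0}} b^{-1} h_{i_{l_0}}^{-1}$ respectively. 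A final componentwise conjugation (by the element with entry $h_{i_{l_0}}^{-1}$ at $i_{l_0+1}$ and $1$'s elsewhere) cleans this to
\begin{equation*}
\epsilon_{l_0}(b) := \bigl(\ldots,\; \underset{i_{l_0}}{b},\; \underset{i_{l_0+1}}{b^{-1}},\; \ldots\bigr) \in N,
\end{equation*}
valid for every $b \in G$ and every $l_0 \in \{0, \ldots, m-1\}$.

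Direct expansion then gives
\begin{equation*}
\epsilon_0(a_0)\, \epsilon_1(a_1) \cdots \epsilon_{m-2}(a_{m-2}) = \bigl(a_0,\; a_0^{-1}a_1,\; a_1^{-1}a_2,\; \ldots,\; a_{m-3}^{-1}a_{m-2},\; a_{m-2}^{-1}\bigr)
\end{equation*}
along $p$. Given any $(g_{i_0}, \ldots, g_{i_{m-1}})$ with $g_{i_0} g_{i_1} \cdots g_{i_{m-1}} = 1$, setting $a_l := g_{i_0} g_{i_1} \cdots g_{i_l}$ makes this product equal to $(g_{i_0}, \ldots, g_{i_{m-1}})$ along $p$. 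Because $c_\tau(g, j_p) = g_{i_1} g_{i_2} \cdots g_{i_{m-1}} g_{i_0} = 1$ is equivalent to $g_{i_0} g_{i_1} \cdots g_{i_{m-1}} = 1$ (every group satisfies $ab = 1 \iff ba = 1$), the element $g_p \in G^n$ obtained by restricting $g$ to $p$ and extending by $1$ off $p$ lies in $N$. Distinct $\tau$-orbits have disjoint supports, so $g = \prod_p g_p$ is a commuting product in $G^n$, giving $g \in N$ and therefore $(g,1) \in H$.

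The main obstacle is the telescoping factorization in the third step. The subset of $G^n$ determined by the condition $c_\tau(\cdot, j_p) = 1$ on each orbit is not a subgroup of $G^n$ when $G$ is non-abelian, so the inclusion into $N$ cannot be established by any soft algebraic closure argument; it must be witnessed by an explicit product decomposition in terms of the two-position generators $\epsilon_l(a_l)$.
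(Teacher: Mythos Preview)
Your proof is correct, and it takes a genuinely different route from the paper's. The paper leans on Lemma~\ref{lem:fix_by_c}: it first normalizes $(h,\tau)$ to a conjugate $(k,\tau)\in H$ where $k$ has all its weight concentrated at the single position $\tau^{-|p|+1}(j_p)$ of each orbit $p$ (so that $c_\tau(k,j_p)=c_\tau(h,j_p)$), then observes that for any $g$ with $c_\tau(g,j_p)=1$ one has $c_\tau(kg,j_p)=c_\tau(k,j_p)$, so Lemma~\ref{lem:fix_by_c} again gives $(kg,\tau)\in H$, and finally $(k,\tau)^{-1}(kg,\tau)=(g,1)$. Your argument bypasses Lemma~\ref{lem:fix_by_c} entirely: you compute $X_kX_1^{-1}=(w_k,1)$ directly, use the identity $X_{rk}X_r^{-1}=(rw_kr^{-1},1)$ to conjugate freely, isolate the two-position generators $\epsilon_l(b)$, and telescope. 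The paper's approach is shorter once Lemma~\ref{lem:fix_by_c} is in hand and makes the ``same cycle products $\Rightarrow$ conjugate in $G^n\times\{1\}$'' mechanism transparent; your approach is self-contained and produces an explicit factorization of $(g,1)$ as a word in conjugates of $(h,\tau)$, which is a small bonus. One presentational remark: when you write ``a final componentwise conjugation\dots cleans this,'' the reason it stays in $N$ is precisely your identity $X_{rk}X_r^{-1}=(rw_kr^{-1},1)$ (equivalently $rw_kr^{-1}=w_{rk}w_r^{-1}$), not any normality of $N$ in $G^n$; you have this, but it is worth making the link explicit.
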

\begin{proof}
We have, by \autoref{lem:fix_by_c}, that there is $k = (k_0,\dots,k_{n-1}) \in G^n$ such that $k_{\sigma^{-|p|+1}(j_p)} = c_{\sigma}(g,j_p)$ and $k_{\sigma^{-l}(j_p)} = 1$ for all $\tau$-orbit $p$ and $l \in \{0,\dots,|p|-2\}$, and for which $(k,\tau)$ is conjugate to $(h,\tau)$ by an element of $G^n \times \{1\}$.
Let $g = (g_0,\dots,g_{n-1}) \in G^n$ be any element satisfying $c_{\tau}(g,j_p) = 1$ for all $\tau$-orbit $p$.
This condition guarantees that $c_\tau(k g,j_p) = c_\tau(g,j_p)$ for every $\tau$-orbit $p$.
Hence, we can use \autoref{lem:fix_by_c} and the hypothesis on $H$ to conclude that 
\begin{equation*}
    \text{$(k g, \tau) \in H$ for any $g \in G^n$ such that $c_{\tau}(g,j_p) = 1$ for all $\tau$-orbit $p$.}
\end{equation*}
Now, being $(k,\tau)$ conjugate to $(h,\tau)$ in $G^n \times \{1\}$, we have that $(k,\tau) \in H$.
Hence, $(k,\tau)^{-1} (k g, \tau) \in H$.
We obtain that
\begin{equation*}
    H \ni (k, \tau)^{-1} (kg, \tau) = 
    (k^{-1}_\tau, \tau^{-1}) (kg, \tau) = 
    (k^{-1} kg, 1) = (g, 1).
\end{equation*}
\end{proof}

In the next result, we compute the centralizers of certain subgroups of $G \wr \Sym(n)$.
This and the fact that centralizers ar preserved under group isomorphisms are used in the proof of \autoref{lem:GisoHifTransAction} for transferring information through isomorphisms.

\autoref{lem:computing_centralizers} uses the following terminology.
We denote by $C(H)$ the centralizer of a subgroup $H$.
If $H$ and $K$ are subgroups of $G \wr \Sym(n)$, then $H\cdot K \coloneqq \{hk : h \in H, k \in K\}$.
If $H \leq G$, then $\Delta^n_H = \{(h,h,\dots,h) \in G^n : h \in H\}$.

\begin{lemma} \label{lem:computing_centralizers}
Assume that $n \geq 3$ and that $C(G) \neq \{1\}$.
Let $\pi\colon G \wr \Sym(n) \to \Sym(n)$ be the factor onto the last coordinate.
Then:
\begin{enumerate}
    \item $C(G \wr \Sym(n)) = \Delta^n_{C(G)} \times \{1\}$. 
    \item If $H \leq G \wr \Sym(n)$ is such that $\pi(H) = \Sym(n)$, then $C((\Delta^n_{C(G)} \times \{1\}) \cdot H)$ is contained in $\Delta^n_G \times \{1\}$.
    \item $C(\Delta^n_{C(G)} \times \Sym(n)) = \Delta^n_G \times \{1\}$.
\end{enumerate}
\end{lemma}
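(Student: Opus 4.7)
My plan is to handle (1), (3), and then (2). All three reduce to commutation computations via the conjugation formula from \autoref{lem:wreath_prod_formulas} together with the rigidity fact that the center of $\Sym(n)$ is trivial for $n \geq 3$. Parts (1) and (3) will be essentially direct; part (2) is the delicate one.

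For (1), the inclusion $\supseteq$ is immediate since an element $((c,\ldots,c),1)$ with $c \in C(G)$ manifestly commutes with everything. For $\subseteq$, I would take $(g,\sigma)$ in the center of $G \wr \Sym(n)$ and apply the conjugation formula to $(1,\tau)$ for every $\tau \in \Sym(n)$: the permutation part forces $\sigma$ to centralize $\Sym(n)$, hence $\sigma = 1$, and the coordinate part forces $g = g_\tau$ for every $\tau$, so $g$ must be a constant tuple. Commuting then with all elements of $G^n \times \{1\}$ pins each entry of $g$ into $C(G)$.

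For (3), exactly the same computation but restricted to the subgroup $\{1\} \times \Sym(n) \subseteq \Delta^n_{C(G)} \times \Sym(n)$ already yields $\sigma = 1$ and $g \in \Delta^n_G$. Conversely, a constant tuple $((x,\ldots,x),1)$ is invariant under every permutation so commutes with every $(1,\tau)$, and it commutes with $((c,\ldots,c),1)$ since each $c \in C(G)$ commutes with $x$ in $G$. Both inclusions follow.

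For (2), the first step is to invoke (1) to observe that $\Delta^n_{C(G)} \times \{1\}$ is central in $G \wr \Sym(n)$, so centralizing $(\Delta^n_{C(G)} \times \{1\}) \cdot H$ is equivalent to centralizing $H$ itself, reducing the task to proving $C(H) \subseteq \Delta^n_G \times \{1\}$. Take $(g,\sigma) \in C(H)$. Since $\pi(H) = \Sym(n)$, conjugating in the permutation factor forces $\sigma$ to centralize $\Sym(n)$, hence $\sigma = 1$. The main challenge is the remaining step: upgrading $(g,1)$ to have all coordinates equal. A single relation $g_{\tau(i)} = h_i g_i h_i^{-1}$ coming from a fixed preimage $(h,\tau) \in H$ only gives coordinate-wise conjugacy, and this is the key obstacle. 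To extract equality I would exploit two sources of additional structure: first, any two preimages $(h,\tau),(h',\tau) \in H$ of the same $\tau$ differ by an element of the kernel $N = H \cap (G^n \times \{1\})$, and commutation of $(g,1)$ with $N$ forces $h_i^{-1} h'_i \in C_G(g_i)$, so the conjugators are constrained to cosets of this centralizer; second, the enlarged group contains the central factor $\Delta^n_{C(G)} \times \{1\}$, which is nontrivial precisely because of the hypothesis $C(G) \neq \{1\}$, and multiplying section representatives by diagonal central elements provides additional preimages of each $\tau$ that push the conjugators $h_i$ into $C_G(g_i)$ themselves, forcing $g_{\tau(i)} = g_i$ for every $i$ and every $\tau$. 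Since $\pi(H) = \Sym(n)$ acts transitively on the coordinates, this finally forces $g$ to be constant.
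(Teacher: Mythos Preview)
Your handling of (1) and (3) is fine and essentially matches the paper's arguments (the paper actually deduces (3) from (2), but your direct computation works just as well).

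The gap is in your plan for (2). Your first move is good: since $\Delta^n_{C(G)}\times\{1\}$ is central by (1), centralizing $(\Delta^n_{C(G)}\times\{1\})\cdot H$ is the same as centralizing $H$, and you correctly get $\sigma=1$ and the relations $g_{\tau^{-1}} = hgh^{-1}$ for each $(h,\tau)\in H$. But then your proposed mechanism collapses: precisely because $\Delta^n_{C(G)}\times\{1\}$ is central, multiplying a preimage $(h,\tau)$ by a diagonal central element $((c,\dots,c),1)$ yields
\[
(c,\dots,c)\,h\,g\,h^{-1}\,(c,\dots,c)^{-1}=hgh^{-1},
\]
so these ``additional preimages'' give no new constraint whatsoever. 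Your own reduction $C((\Delta^n_{C(G)}\times\{1\})\cdot H)=C(H)$ already says this. The hypothesis $C(G)\neq\{1\}$ is a red herring here; the paper's proof of (2) does not use it.

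What the paper does instead is a commutator trick. Given $(h,\tau),(k,\eta)\in H$, applying the relation to the product $(h,\tau)(k,\eta)=(h_{\eta^{-1}}k,\tau\eta)$ gives $g_{\eta^{-1}\tau^{-1}}=h_{\eta^{-1}}kgk^{-1}h_{\eta^{-1}}^{-1}$; on the other hand, computing $(g_{\tau^{-1}})_{\eta^{-1}}$ directly gives the same expression. Hence $g_{\eta^{-1}\tau^{-1}}=g_{\tau^{-1}\eta^{-1}}$ for all $\tau,\eta\in\Sym(n)$, so $g$ is fixed by the commutator subgroup $[\Sym(n),\Sym(n)]$. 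For $n\geq 3$ this subgroup acts transitively on $\{0,\dots,n-1\}$, forcing $g\in\Delta^n_G$. This is the missing idea you need to replace your second step.
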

\begin{proof}
We start with some observations.
The condition $n \geq 3$ ensures that 
\begin{multline} \label{eq:lem:computing_centralizers:1}
    \text{$[\Sym(n), \Sym(n)] \coloneqq 
    \{ghg^{-1}h^{-1} : g,h \in \Sym(n)\}$} \\ 
    \text{acts transitively on $\{0,1,\dots,n-1\}$.}
\end{multline}
Using Item \ref{lem:wreath_prod_formulas:conjugation} of \autoref{lem:wreath_prod_formulas}, we get that, for any $(g,\sigma)$ and $(h,\tau)$ in $G \wr \Sym(n)$,
\begin{equation} \label{eq:lem:computing_centralizers:2}
    \text{$(g,\sigma)$ commutes with $(h,\tau)$ if and only if $g_{\tau^{-1}} h = h_{\sigma^{-1}} g$ and $\tau \sigma = \sigma \tau$.}
\end{equation}

We now prove Item (1).
If follows from \eqref{eq:lem:computing_centralizers:2} that $\Delta^n_{C(G)} \times \{1\}$ is a subset of $C(G \wr \Sym(n))$.
Let $(g,\sigma)$ be an element of $C(G \wr \Sym(n))$.
Assume, with the aim of obtaining a contradiction, that $\sigma \not= 1$.
Note that there is $i \in \{0,1,\dots,n-1\}$ such that $\sigma^{-1}(i) \neq i$.
Hence, we can find (using that $G \neq \{1\}$) an element $h \in G^n$ satisfying $h_i = 1$ and $h_{\sigma^{-1}(i)} \neq 1$.
Then, as $(g,\sigma)$ has to commute with $(h,1)$, \eqref{eq:lem:computing_centralizers:2} ensures that
\begin{equation*}
    g_i = g_i h_i = h_{\sigma^{-1}(i)} g_i.
\end{equation*}
In particular, $h_{\sigma^{-1}(i)} = 1$, which is a contradiction.
Therefore, $\sigma = 1$.
Then, by \eqref{eq:lem:computing_centralizers:2}, $gh = hg$ for all $h \in G^n$, so $g \in C(G)^n$.
This and \eqref{eq:lem:computing_centralizers:2} yield that $g_\tau = g$ for all $\tau \in \Sym(n)$.
We conclude that $g \in \Delta^n_G$, and thus that $(g,\sigma) \in \Delta^n_{C(G)} \times \{1\}$.
This proves Item (1).

We continue with Item (2).
Let $(g,\sigma) \in C(\Delta^n_{C(G)} \cdot H)$ be arbitrary.
By \eqref{eq:lem:computing_centralizers:2}, $\sigma$ commutes with $\tau$ for all $(h,\tau)$ in $(\Delta^n_{C(G)} \times \{1\}) \cdot H$.
Since $\pi(H) = \Sym(n)$, this implies that $\sigma \in C(\Sym(n))$.
The centralizer of $\Sym(k)$ is trivial for $k \geq 3$, so $\sigma = 1$.
This and \eqref{eq:lem:computing_centralizers:2} give that 
\begin{equation} \label{eq:lem:computing_centralizers:3}
    \text{$g_{\tau^{-1}} = h g h^{-1}$ for all $(h,\tau) \in H$.}
\end{equation}
We use \eqref{eq:lem:computing_centralizers:3} to prove that 
\begin{equation} \label{eq:lem:computing_centralizers:4}
    g_{\eta^{-1} \tau^{-1} \eta \tau} = g \
    \text{for all $\tau, \eta \in \Sym(n)$.}
\end{equation}
Since $\pi(H) = \Sym(n)$, there are $h,k \in G^n$ such that $(h,\tau)$ and $(k,\eta)$ belong to $H$.
Note that $(h,\tau)(k,\eta)$ is equal to $(h_{\eta^{-1}} k, \tau \eta)$, so, by \eqref{eq:lem:computing_centralizers:3},
\begin{equation*}
    g_{\eta^{-1} \tau^{-1}} = 
    g_{(\tau \eta)^{-1}} = 
    h_{\eta^{-1}} k g k^{-1} h_{\eta^{-1}}^{-1}.
\end{equation*}
Similarly, we can use \eqref{eq:lem:computing_centralizers:3} to compute as follows:
\begin{equation*}
    g_{\tau^{-1} \eta^{-1}} = 
    (g_{\tau^{-1}})_{\eta^{-1}} = 
    (h g h^{-1})_{\eta^{-1}} = 
    h_{\eta^{-1}} g_{\eta^{-1}} h_{\eta^{-1}}^{-1} = 
    h_{\eta^{-1}}  k g k^{-1} h_{\eta^{-1}}^{-1}.
\end{equation*}
We conclude that $g_{\eta^{-1} \tau^{-1}} = g_{\tau^{-1} \eta^{-1}}$ for all $\tau, \eta \in \Sym(n)$, and thus that \eqref{eq:lem:computing_centralizers:4} holds.

Equation \eqref{eq:lem:computing_centralizers:4} implies that $g_{[\tau, \eta]} = g$ for any $\tau, \eta \in \Sym(n)$.
Hence, by \eqref{eq:lem:computing_centralizers:1}, $g \in \Delta^n_G$.
We have proved that $(g,\sigma) \in \Delta^n_G \times \{1\}$, and thereby that $C((\Delta^n_{C(G)} \times \{1\}) \cdot H)$ is contained in $\Delta^n_G \times \{1\}$.

We now prove Item (2).
Notice that $C(\Delta^n_{C(G)} \times \Sym(n))$ is contained in $\Delta^n_G \times \{1\}$ by Item (2).
The reverse inclusion is a direct consequence of \eqref{eq:lem:computing_centralizers:2}.
\end{proof}

\begin{lemma} \label{lem:GisoHifTransAction}
Let $\phi \colon G\wr\Sym(n) \to H\wr\Sym(n)$ be an isomorphism, where $H$ is any group, and let $\pi \colon H\wr\Sym(n)\to\Sym(n)$ be the morphism onto the last coordinate.
Assume that $n \geq 3$ and that $\pi\circ\phi(\{1\}\times\Sym(n))$ acts transitively on $\{0,1,\dots,n-1\}$.
Then, $G$ is isomorphic to $H$.
\end{lemma}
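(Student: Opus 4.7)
My plan is to show that the isomorphism $\phi$ restricts to an isomorphism between the diagonals $\Delta^n_G\times\{1\}$ and $\Delta^n_H\times\{1\}$, which will yield $G\cong H$ because those subgroups are canonically isomorphic to $G$ and $H$ respectively. The first step is to upgrade the transitivity hypothesis: the image $\pi\circ\phi(\{1\}\times\Sym(n))$ is simultaneously a transitive subgroup of $\Sym(n)$ and a quotient of $\Sym(n)$. By \autoref{lem:symmetric_groups_structure} a quotient of $\Sym(n)$ has order $1$, $2$, $6$ (only when $n=4$), or $n!$, while a transitive subgroup of $\Sym(n)$ must have order divisible by $n$; moreover, no transitive subgroup of $\Sym(4)$ has order $6$. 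Matching these two constraints forces $\pi\circ\phi|_{\{1\}\times\Sym(n)}$ to be an isomorphism onto $\Sym(n)$. Set $T\coloneqq\phi(\{1\}\times\Sym(n))$; then $\pi(T)=\Sym(n)$.

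Next I would apply Item~(2) of \autoref{lem:computing_centralizers} inside $H\wr\Sym(n)$ with the subgroup $T$. Combined with Item~(1), which makes $\Delta^n_{C(H)}\times\{1\}$ central, this yields $C(T)=C\bigl((\Delta^n_{C(H)}\times\{1\})\cdot T\bigr)\subseteq \Delta^n_H\times\{1\}$. A direct computation gives $C_{G\wr\Sym(n)}(\{1\}\times\Sym(n))=\Delta^n_G\times\{1\}$, and since isomorphisms carry centralizers to centralizers,
\begin{equation*}
    \phi(\Delta^n_G\times\{1\})\;=\;C_{H\wr\Sym(n)}(T)\;\subseteq\;\Delta^n_H\times\{1\},
\end{equation*}
which produces an injective homomorphism $G\hookrightarrow H$.

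For the reverse inclusion I look at $S'\coloneqq\phi^{-1}(\{1\}\times\Sym(n))\leq G\wr\Sym(n)$. In $H\wr\Sym(n)$ the subgroup $\{1\}\times\Sym(n)$ centralizes $\Delta^n_H\times\{1\}$, so $S'$ centralizes $\phi^{-1}(\Delta^n_H\times\{1\})\supseteq\Delta^n_G\times\{1\}$. Thus $S'\subseteq C_{G\wr\Sym(n)}(\Delta^n_G\times\{1\})=C(G)^n\times\Sym(n)$, and $S'\cap(G^n\times\{1\})\subseteq C(G)^n\times\{1\}$ is an \emph{abelian} normal subgroup of $S'\cong\Sym(n)$. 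For $n\geq 5$ the only abelian normal subgroup of $\Sym(n)$ is trivial (\autoref{lem:symmetric_groups_structure}), so $\pi|_{S'}$ is injective and $\pi(S')=\Sym(n)$. A second application of Item~(2) of \autoref{lem:computing_centralizers}, now to $S'$ inside $G\wr\Sym(n)$, gives $C_{G\wr\Sym(n)}(S')\subseteq\Delta^n_G\times\{1\}$; translating through $\phi$, this reads $\phi^{-1}(\Delta^n_H\times\{1\})\subseteq\Delta^n_G\times\{1\}$. Combined with the previous paragraph this forces $\phi(\Delta^n_G\times\{1\})=\Delta^n_H\times\{1\}$, and $G\cong H$ follows.

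The main obstacle is the low-dimensional cases $n\in\{3,4\}$, where $\Sym(n)$ does admit non-trivial abelian normal subgroups ($A_3$ and the Klein four-group $V$), so the abelian-kernel argument of the previous paragraph does not a priori force $\pi(S')=\Sym(n)$. Ruling out these possibilities will require a finer analysis of $S'$ as a subgroup of $C(G)^n\times\Sym(n)$, exploiting the compatibility between the abstract $\Sym(n)$-structure on $S'$ coming from conjugation inside it and the coordinate-permutation action inherited from the ambient wreath product; the goal is to show that the putative embedding of $A_3$ or $V$ into $C(G)^n$ cannot coexist with the required conjugation relations in $S'$.
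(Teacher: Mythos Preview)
Your overall strategy coincides with the paper's: compute $C_{G\wr\Sym(n)}(\{1\}\times\Sym(n))=\Delta^n_G\times\{1\}$, transport through $\phi$, and compare with $\Delta^n_H\times\{1\}$ via \autoref{lem:computing_centralizers}. Your preliminary step---upgrading the transitivity hypothesis to $\pi(T)=\Sym(n)$ by matching the possible orders of a quotient of $\Sym(n)$ against the divisibility-by-$n$ forced by transitivity---is correct, and in fact makes explicit a point the paper passes over when it says that $T=\phi(\{1\}\times\Sym(n))$ ``satisfies the hypothesis'' of the relevant item of \autoref{lem:computing_centralizers}.

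The difference, and the source of your acknowledged gap at $n\in\{3,4\}$, is that the paper does not run a symmetric argument: it simply asserts the \emph{equality} $C(T)=\Delta^n_H\times\{1\}$ in \eqref{eq:lem:GisoHifTransAction:2} and deduces $G\cong H$ in one stroke. Note that \autoref{lem:computing_centralizers} as stated only delivers the inclusion $C(T)\subseteq\Delta^n_H\times\{1\}$, so the reverse containment is not explained by that citation alone; the paper is thus rather terse precisely where you work harder. Your two-sided route---first $\phi(\Delta^n_G\times\{1\})\subseteq\Delta^n_H\times\{1\}$, then $\phi^{-1}(\Delta^n_H\times\{1\})\subseteq\Delta^n_G\times\{1\}$---is correct for $n\geq5$, but the abelian-normal-subgroup trick genuinely fails for $n=3,4$ since $A_3$ and the Klein group $V$ are available as kernels of $\pi'|_{S'}$. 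To finish along your lines you must either complete the finer case analysis you sketch, or find a direct argument for the equality $C(T)=\Delta^n_H\times\{1\}$ valid for all $n\geq3$, which would dispense with the symmetric half altogether.
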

\begin{proof}
On the one hand, Item (i) of \autoref{lem:computing_centralizers} gives that
\begin{equation} \label{eq:lem:GisoHifTransAction:1}
    C(\{1\} \times \Sym(n)) = \Delta^n_G \times \{1\} \cong G.
\end{equation}
On the other hand, as $\pi\circ\phi(\{1\}\times\Sym(n))$ acts transitively on $\{0,1,\dots,n-1\}$, $\phi(\{1\} \times \Sym(n))$ satisfies the hypothesis of Item (ii) of \autoref{lem:computing_centralizers}, so 
\begin{equation} \label{eq:lem:GisoHifTransAction:2}
    C(\phi(\{1\} \times \Sym(n))) = \Delta^n_H \times \{1\} \cong H.
\end{equation}
As $\phi$ bijectively maps $C(\{1\} \times \Sym(n))$ onto $C(\phi(\{1\} \times \Sym(n)))$, it follows from \eqref{eq:lem:GisoHifTransAction:1} and \eqref{eq:lem:GisoHifTransAction:2} that $G$ is isomorphic to $H$.
\end{proof}

We have all the elements to prove the two principal results of the section.

\begin{theorem} \label{prop:main_alg_lemma}
Let $G$ and $H$ be non-trivial groups, $n,m \geq 2$, and suppose that $G \wr \Sym(n)$ is isomorphic to $H \wr \Sym(m)$.
Then, $n = m$. 
Moreover, if $n \geq 4$, then $G$ is isomorphic to $H$.
\end{theorem}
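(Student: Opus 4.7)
The plan is to exploit the rigid normal-subgroup lattice of symmetric groups (\autoref{lem:symmetric_groups_structure}) through the canonical projections $\pi_1\colon G\wr\Sym(n)\twoheadrightarrow\Sym(n)$ and $\pi_2\colon H\wr\Sym(m)\twoheadrightarrow\Sym(m)$, whose kernels are the base groups $G^n\times\{1\}$ and $H^m\times\{1\}$. Given the isomorphism $\phi$, the central quantities to track are
\[
K \;=\; \pi_2(\phi(G^n\times\{1\}))\trianglelefteq \Sym(m), \qquad L \;=\; \pi_2(\phi(\{1\}\times\Sym(n)))\leq \Sym(m).
\]
Since $(G^n\times\{1\})\cdot(\{1\}\times\Sym(n))=G\wr\Sym(n)$ and $\pi_2\circ\phi$ is surjective, one obtains the key identity $K\cdot L=\Sym(m)$. \autoref{lem:symmetric_groups_structure} restricts $K$ to one of $\{1\}$, $V$ (only when $m=4$), $A_m$, or $\Sym(m)$, and \autoref{lem:LisTransIfHasComplement} then delivers the dichotomy: either $K\supseteq A_m$, or $L$ acts transitively on $\{0,\dots,m-1\}$.

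To establish $n=m$, I would apply \autoref{lem:fix_by_null} to the normal subgroup $N=\phi^{-1}(H^m\times\{1\})$ of $G\wr\Sym(n)$, which is automatically closed under conjugation by $G^n\times\{1\}$. This tool manufactures many "flat" elements $(g,1)\in N$ out of any element of $N$ with non-trivial permutation part, and combined with \autoref{lem:subgroups_with_=_card_sym_groups} (to collapse ambiguity among normal subgroups of matching cardinality inside $\Sym(m)$), a case-by-case argument on the four possible values of $K$ is designed to force $K=\{1\}$, i.e.\ $\phi(G^n\times\{1\})\subseteq H^m\times\{1\}$. The symmetric argument applied to $\phi^{-1}$ gives the reverse inclusion, so $\phi$ carries $G^n\times\{1\}$ bijectively onto $H^m\times\{1\}$. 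Passing to quotients, $\phi$ then induces an isomorphism $\Sym(n)\cong\Sym(m)$, which forces $n=m$.

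For the second part, assume $n=m\geq 4$. The previous stage yields $K=\{1\}$, so $L=\Sym(m)=\Sym(n)$; in particular $L$ acts transitively on $\{0,\dots,n-1\}$. \autoref{lem:GisoHifTransAction}, which applies since $n\geq 4\geq 3$, then immediately delivers $G\cong H$.

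The main obstacle will be the case analysis ruling out $K\in\{V,A_m,\Sym(m)\}$, especially in the small cases $m\in\{2,3,4\}$: there $\Sym(m)$ admits abelian or solvable normal subgroups, so the surjection $G^n\twoheadrightarrow K$ furnished by the dichotomy offers little structural leverage, and \autoref{lem:fix_by_null} must be deployed delicately to extract enough base-group elements from $N$ (the Klein four-group exception in \autoref{lem:subgroups_with_=_card_sym_groups} is the archetype of this difficulty). The restriction to $n\ge 4$ in the second conclusion likely reflects precisely the borderline case $n=3$, where the transitivity of $L$ required by \autoref{lem:GisoHifTransAction} may fail to propagate cleanly even from an otherwise successful reduction.
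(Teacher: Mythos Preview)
Your strategy hinges on proving that $\phi$ must carry the base group $G^n\times\{1\}$ onto $H^m\times\{1\}$ (equivalently, that $K=\pi_2(\phi(G^n\times\{1\}))$ is trivial), and then passing to quotients. This central claim is false in general. For instance, take $G=H=\Z/2\Z$ and $n=m=2$: then $G\wr\Sym(2)\cong D_4$ has two Klein four-subgroups, only one of which is the base group, and there are automorphisms of $D_4$ exchanging them. So there exist isomorphisms $\phi$ with $K\neq\{1\}$, and no amount of case analysis can ``force $K=\{1\}$'' as you propose. Your application of \autoref{lem:fix_by_null} to $N=\phi^{-1}(H^m\times\{1\})$ is also aimed in the wrong direction: it produces \emph{some} flat elements $(g,1)$ lying in $N$, but to get $K=\{1\}$ you would need \emph{every} $(g,1)$ to lie in $N$, and the lemma does not give that.

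The paper's proof accepts that $\pi(K)$ may be non-trivial and works around it. The crucial extra ingredient you are missing is the decomposition of the base group into the coordinate subgroups $G_j\times\{1\}$ and the study of their images $K_j=\phi(G_j\times\{1\})$. Because the $G_j$ pairwise commute and are mutually conjugate in $G\wr\Sym(n)$, the projections $\pi(K_j)$ are mutually conjugate normal subgroups of $\pi(K)$; combined with \autoref{lem:subgroups_with_=_card_sym_groups} this forces all the $\pi(K_j)$ to coincide, hence $\pi(K)$ is abelian. By \autoref{lem:symmetric_groups_structure} this pins $m$ down to $\{2,3\}$ (or $m=4$ with $\pi(K)=V$), and an index count together with the claim $K\subsetneq H^m\times\pi(K)$ (proved by applying \autoref{lem:fix_by_null} to the $K_j$ inside $H\wr\Sym(m)$, not to $N$) finishes the identification $n=m$ in these residual cases. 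Likewise, for $n=m=4$ the conclusion $G\cong H$ comes not from $K=\{1\}$ but from the dichotomy of \autoref{lem:LisTransIfHasComplement}: since $\pi(K)=V\not\supseteq A_4$, it is $L$ that acts transitively, and then \autoref{lem:GisoHifTransAction} applies.
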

\begin{remark}
When $n \in \{2,3\}$, our techniques only permit to prove that one of the groups $G$ and $H$ is a subgroup of index at most 2 of the other.
\end{remark}
\begin{proof}
We start with some general observations.
We assume without loss of generality that $n \leq m$.
Let $G_j$ be the set of elements $g = (g_0,\dots,g_{n-1}) \in G^n$ such that $g_i = 1$ for all $i \in \{0,\dots,n-1\} \setminus \{j\}$.
We also define $K_j = \phi(G_j \times \{1\})$ and $K = \phi(G^n \times \{1\})$.
It follows from \autoref{lem:wreath_prod_formulas} that 
\begin{enumerate}[label=(\roman*')]
    \item $G_j$ is a normal subgroup of $G^n$,
    \item $G_0\cdots G_{n-1} \coloneqq \{g_0 g_1 \cdots g_{n-1} : g_j \in G_j\}$ is equal to $G^n$,
    \item $G_i \times \{1\}$ is conjugate to $G_j \times \{1\}$ in $G \wr \Sym(n)$ by the involution $\sigma(i) = j$, $\sigma(j) = i$, $\sigma(k) = k$ for $k \neq i,j$,
    \item $[G_i, G_j] \coloneqq \{ghg^{-1}h^{-1} : g \in G_i, h \in G_j\} = \{1\}$ if $i \not= j$.
\end{enumerate}
Hence, as $\phi$ is an isomorphism,
\begin{enumerate}[label=(\roman*)]
    \item $K_j$ is a normal subgroup of $K$,
    \item $K_0 \cdots K_{n-1} \coloneqq \{k_0 k_1 \cdots k_{n-1} : k_j \in K_j\}$ is equal to $K$,
    \item $K_i$ is conjugate to $K_j$ in $H \wr \Sym(m)$, and
    \item $[K_i, K_j] \coloneqq \{ghg^{-1}h^{-1} : g \in K_i, h \in K_j\} = \{1\}$ for all $i \not= j$.
\end{enumerate}
We denote by $\pi\colon H\wr\Sym(m) \to \Sym(m)$ the projection onto the last coordinate.
Recall that this is an onto morphism of groups.
Observe that $K \subseteq H^m \times \pi(K)$, so 
\begin{multline} \label{eq:prop:main_alg_lemma:1}
    [\Sym(m) : \pi(K)] = 
    [H\wr\Sym(m) : H^m \times \pi(K)] \\ \leq 
    [H\wr\Sym(m) : K] = 
    [G\wr\Sym(n) : G^n \times \{1\}] = |\Sym(n)|.
\end{multline}
Note that since $G^n\times\{1\}$ is normal in $G\wr\Sym(n)$ and $\pi \circ\phi$ is onto, $\pi(K)$ is normal in $\Sym(m)$.
\medskip

We now consider two cases.
Suppose that $\pi(K) = \{1\}$.
Then, by \eqref{eq:prop:main_alg_lemma:1}, $m \leq n$, so $m = n$.
Let us now further assume that $n \geq 3$ and prove that $G$ is isomorphic to $H$.
Since $G^n \times \{1\}$ and $\{1\} \times \Sym(n)$ generate $G \wr \Sym(n)$, their images under $\pi\circ\phi$ generate $\Sym(m)$.
Being $\pi\circ\phi(G^n \times \{1\})$ equal to $\pi(K) = \{1\}$, we deduce that $\pi \circ \phi(\{1\} \times \Sym(n)) = \Sym(m)$.
In particular, as $n = m$, the hypothesis of \autoref{lem:GisoHifTransAction} is satisfied, so $G$ is isomorphic to $H$.
\medskip

For the second case, we assume that $\pi(K) \not= \{1\}$.
First, we note that, by (i), (ii) and (iii),
\begin{equation} \label{eq:prop:main_alg_lemma:2}
    \{1\} \neq \pi(K_j) \trianglelefteq \pi(K) \trianglelefteq \Sym(m) \
    \text{for all $j \in \{0,1,\dots,n-1\}$.}
\end{equation}
Let $L = \phi(\{1\} \times \Sym(n))$.
We can compute 
\begin{multline*}
    \pi(L)\cdot\pi(K) = \pi(L\cdot K) =
    \pi\circ\phi(\{1\}\times\Sym(n) \cdot G^n\times\{1\}) \\ = 
    \pi\circ\phi(G\wr\Sym(n)) = \Sym(m).
\end{multline*}
Thus, by \autoref{lem:LisTransIfHasComplement},
\begin{equation} \label{eq:prop:main_alg_lemma:eitherAmOrTrans}
    \text{either $\pi(K) \supseteq A_m$ or $\pi(L)$ acts transitively on $\{0,1,\dots,m-1\}$.}
\end{equation}

\medskip

\textbf{Claim.}
The following holds:
\begin{equation} \label{eq:prop:main_alg_lemma:CLAIM}
    \text{$K$ is not equal to $H^m \times \pi(K)$.}
\end{equation}
\begin{proof}[Proof of the claim]
Assume, with the aim of obtaining a contradiction, that $K = H^m \times \pi(K)$.
Then, by (i), $(k,1) K_j (k,1)^{-1} \subseteq K_j$ for any $k \in H^m$ and $j \in \{0,1,\dots,n-1\}$.
This allows us to use \autoref{lem:fix_by_null} with $K_j$ and any $(h,\tau) \in K$, yielding the following:
If $\tau \in \pi(K)$ and we fix $j_p \in p$ for every $\tau$-orbit $p$, then 
\begin{equation}\label{eq:prop:main_alg_lemma:KjContainsInnerts}
    K_j \supseteq \{(g,1) \in H^m \times \{1\} : \text{$c_\tau(g,j_p) = 1$ for all $\tau$-orbit $p$}\}
\end{equation}
for every $j \in \{0,1,\dots,n-1\}$.

We now consider the following two cases.
\begin{enumerate}
    \item Assume that one of the groups $\pi(K_j)$ contains a normal subgroup $N$ of $\Sym(m)$ that is different from $\{1\}$.
    As (iii) ensures that $\pi(K_i)$ is conjugate to $\pi(K_j)$, $N$ is contained in each $\pi(K_j)$.
    Therefore, being $H$ a non-trivial group, \eqref{eq:prop:main_alg_lemma:KjContainsInnerts} implies that $K_i \cap K_j \neq \{1\}$ for every $i,j$.
    This contradiction shows that this case does not occur.

    \item Let us assume that every $\pi(K_j)$ does not contain a normal subgroup of $\Sym(m)$ that is different from $\{1\}$.
    Then, by \eqref{eq:prop:main_alg_lemma:2} and \autoref{lem:symmetric_groups_structure}, $m = 4$ and $\pi(K)$ is the normal subgroup $V$ of $\Sym(4)$ that is isomorphic to the Klein four-group.
    If we denote by $(i\ j)$ the element of $\Sym(4)$ that permutes $i$ and $j$, then we can describe $V$ as follows:
    \begin{equation*}
        V = \{1, (1\ 2)(3\ 4), (1\ 3)(2\ 4), (1\ 4)(2\ 3)\} \leq \Sym(4).
    \end{equation*}
    There is no loss of generality in assuming that $(1\ 2)(3\ 4) \in \pi(K_0)$.
    If $(1\ 2)(3\ 4)$ belongs to $\pi(K_1)$, then, by \eqref{eq:prop:main_alg_lemma:KjContainsInnerts}, $(h,1) \in K_0 \cap K_1$ for any $h \in H^4$ of the form $h = (\alpha, \alpha^{-1}, 1, 1)$, with $\alpha \in H$.
    This implies that $K_0 \cap K_1 \not= \{1\}$, which contradicts that the $G_j = \phi^{-1}(K_j)$ are disjoint.
    In turn, if $(1\ 3)(2\ 4)$ belongs to $K_1$, then \eqref{eq:prop:main_alg_lemma:KjContainsInnerts} gives that $(h,1) \in K_0 \cap K_1$ for any $h \in H^4$ of the form $h = (\alpha, \alpha^{-1}, \alpha^{-1}, \alpha)$, with $\alpha \in H$.
    This contradicts again that $K_0 \cap K_1 = \{1\}$.
    A similar argument works when $(1\ 4)(2\ 4) \in \pi(K_1)$.
    Thus, as $K_1 \subseteq V$, we have that $K_1 = \{1\}$, which is impossible by \eqref{eq:prop:main_alg_lemma:2}.
    This completes the proof of \eqref{eq:prop:main_alg_lemma:CLAIM}.
\end{enumerate}
\end{proof}

\textit{Proof of \autoref{prop:main_alg_lemma} continued.}
We now continue with the main part of the proof.
Assume first that $\pi(K)$ is not the Klein group $V$.
Then, \eqref{eq:prop:main_alg_lemma:2} and (iii) permit to use \autoref{lem:subgroups_with_=_card_sym_groups} to obtain that $\pi(K_i) = \pi(K_j)$ for all $i,j \in \{0,\dots,n-1\}$.
In view of (ii), we have that $\pi(K) = \pi(K_j)$ for every $j \in \{0,\dots,n-1\}$.
Combining this with (iv) yields that $[\pi(K), \pi(K)] = \{1\}$, that is, that $\pi(K)$ is abelian.
Given that $\pi(K)$ normal in $\Sym(m)$ by \eqref{eq:prop:main_alg_lemma:2}, \autoref{lem:symmetric_groups_structure} reveals that $\pi(K)$ can only be abelian if either $m = 2$ or $m = 3$ and $\pi(K) = A_3$.
If $m = 2$, then $n = m = 2$ (as we assumed that $n \leq m$) and we are done.
Let us assume that $m = 3$ and that $\pi(K) = A_3$.
Observe that, by \eqref{eq:prop:main_alg_lemma:1},
\begin{multline*}
    2 = [\Sym(m):A_m] = 
    [H\wr\Sym(m) : H^m\times\pi(K)] \\ \leq 
    [H\wr\Sym(m) : K] = |\Sym(n)| = n!
\end{multline*}
This implies that if $n$ were equal to $2$, then, since $H\wr\pi(K)$ has $K$ as a subgroup, $K$ would have to be equal to $H^m\times\pi(K)$, which contradicts \eqref{eq:prop:main_alg_lemma:CLAIM}.
Therefore, $n \neq 2$, and thus $n = m = 3$ (as we assumed that $n \leq m$).
The proof of this case is complete.
\medskip

We now consider the case when $\pi(K)$ is the Klein group $V$.
Note that, by \eqref{eq:prop:main_alg_lemma:2} and \autoref{lem:symmetric_groups_structure}, $m$ must be equal to $4$.
Hence, as the index of $V$ in $\Sym(m)$ is $6$, we have from \eqref{eq:prop:main_alg_lemma:1} that 
\begin{equation} \label{eq:prop:main_alg_lemma:3.1}
    6 = [\Sym(m):\pi(K)] = 
    [H\wr\Sym(m) : H^m\times\pi(K)] \leq 
    [H\wr\Sym(m) : K] = n!
\end{equation}
Being $n \leq m = 4$, either $n = 3$ or $n = 4$.
If $n = 3$, then \eqref{eq:prop:main_alg_lemma:3.1} implies that $K = H^m \times \pi(K)$, which is impossible by \eqref{eq:prop:main_alg_lemma:CLAIM}.
Hence, $n = m = 4$.
Notice that \eqref{eq:prop:main_alg_lemma:eitherAmOrTrans} ensures, as $\pi(K) = V \subsetneq A_4$, that $\pi(L)$ acts transitively on $\{0,1,2,3\}$, where $L = \phi(\{1\}\times\Sym(4))$.
So, by \autoref{lem:GisoHifTransAction}, $G$ is isomorphic to $H$.
\end{proof}


The last result of this section is an analog of \autoref{prop:main_alg_lemma} applicable when one of the groups under consideration is approximated by, rather than isomorphic to, wreath products of the form $H_k \wr \Sym(n_k)$.
This scenario arises naturally in the study of stabilized automorphism groups of transitive systems.

\begin{proposition} \label{prop:main_alg_lemma2}
Let $G, H$ be groups and $n \geq 2$.
Suppose that there are subgroups $(H_\ell \leq H : \ell \geq 1)$ of $H$ such that:
\begin{enumerate}
    \item $H_\ell \leq H_{\ell+1}$ for $\ell \geq 1$ and $H = \bigcup_{\ell\geq1} H_\ell$.
    \item For all $\ell \geq 1$, there is an isomorphism $H_\ell \cong H'_\ell \wr \Sym(m_\ell)$ for certain group $H'_\ell$ and $m_\ell \geq 1$.
    \item For all $r \geq 1$ there are infinitely many $\ell \geq r$ such that $\phi_\ell^{-1}(H'_\ell \times \{1\})$ is contained in $\phi_r^{-1}(H'_r \times \{1\})$.
\end{enumerate}
If $G \wr \Sym(n)$ is isomorphic to $H$, then $\limsup_{\ell\to+\infty} m_\ell < +\infty$.
\end{proposition}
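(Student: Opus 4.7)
I would argue by contradiction, assuming $\limsup_{\ell\to\infty} m_\ell = +\infty$. Let $\psi \colon G\wr\Sym(n) \to H$ denote the given isomorphism, let $\bar\pi \colon H \to \Sym(n)$ be the composition of $\psi^{-1}$ with the canonical projection $G\wr\Sym(n) \to \Sym(n)$, and let $W_H = \psi(G^n \times \{1\})$ be its kernel, a normal subgroup of $H$ of index $n!$. For each $\ell$, write $B_\ell = \phi_\ell^{-1}((H'_\ell)^{m_\ell} \times \{1\})$ for the wreath base of $H_\ell$ (interpreting condition~(3) in this natural way) and $T_\ell = \phi_\ell^{-1}(\{1\} \times \Sym(m_\ell))$ for a complement. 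Using condition~(3) iteratively, I would extract a subsequence $\ell_1 < \ell_2 < \cdots$ with $m_{\ell_k} \to \infty$ and $B_{\ell_{k+1}} \subseteq B_{\ell_k}$; since the $H_\ell$'s are increasing to $H$ and $\ell_k \to \infty$, $\bigcup_k H_{\ell_k} = H$, hence $\bigcup_k \bar\pi(H_{\ell_k}) = \Sym(n)$.

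The crux is to show that for $m_\ell$ sufficiently large, the subgroup $\bar\pi(H_\ell) \leq \Sym(n)$ is abelian. Once $m_\ell \geq 5$ and $|A_{m_\ell}| > n!$, the simplicity of $A_{m_\ell}$ forces $A_{m_\ell} \subseteq \ker(\bar\pi|_{T_\ell})$, since any homomorphism $A_{m_\ell} \to \Sym(n)$ must be trivial. Writing $\iota_j \colon H'_\ell \hookrightarrow B_\ell$ for the $j$-th coordinate embedding, the relation $\sigma\, \iota_j(h)\, \sigma^{-1} = \iota_{\sigma(j)}(h)$ together with the transitivity of the $A_{m_\ell}$-action on $\{0,\dots,m_\ell-1\}$ shows that $\bar\pi \circ \iota_j$ is independent of $j$; setting $f \coloneqq \bar\pi \circ \iota_0 \colon H'_\ell \to \Sym(n)$, one obtains $\bar\pi(h_0,\dots,h_{m_\ell-1}) = \prod_i f(h_i)$ for every element of $B_\ell$. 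Since the $\iota_i$'s commute inside $B_\ell$, this product must be invariant under reordering, which forces $f(H'_\ell)$ to be abelian in $\Sym(n)$; the same reordering calculation applied to $b_\sigma = \sigma b \sigma^{-1}$ shows that $\bar\pi(T_\ell)$ commutes with $\bar\pi(B_\ell)$, making $\bar\pi(H_\ell) = \bar\pi(B_\ell)\,\bar\pi(T_\ell)$ abelian.

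For $n \geq 3$ this yields the contradiction immediately: the increasing chain of abelian subgroups $\bar\pi(H_{\ell_k})$ inside the finite group $\Sym(n)$ stabilizes, and its union equals $\Sym(n)$, forcing $\Sym(n)$ to be abelian.

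The main obstacle is the case $n = 2$: then $\Sym(2) = \Z/2\Z$ is already abelian and the argument above is silent. Here condition~(3), which plays no role in the $n \geq 3$ case, must be genuinely exploited. I would analyze, for $b \in B_{\ell_{k+1}} \subseteq B_{\ell_k}$, the compatibility of the two descriptions of $\bar\pi(b)$ given by the maps $f_{\ell_k}$ and $f_{\ell_{k+1}}$ along the full nesting $B_{\ell_1} \supseteq B_{\ell_2} \supseteq \cdots$, tracking the accompanying sign characters on the $\Sym(m_{\ell_k})$ factors. Alternatively, one might try to upgrade the nested wreath bases into a second wreath decomposition $H \cong K \wr \Sym(m)$ with $m \neq 2$ and invoke Proposition~\ref{prop:main_alg_lemma} to contradict the fixed value $n = 2$ directly from the rigidity theorem.
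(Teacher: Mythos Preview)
Your approach is dual to the paper's: you project $H_\ell$ into $\Sym(n)$ via $\bar\pi = \pi\circ\psi^{-1}$, whereas the paper projects $K \coloneqq \psi(G^n \times \{1\})$ and its coordinate factors $K_j \coloneqq \psi(G_j \times\{1\})$ into $\Sym(m_\ell)$ via $\pi_\ell\circ\phi_\ell$. For $n \geq 3$ your argument is correct, and in one respect sharper than the paper's: it never invokes condition~(3). (The nested-base extraction you announce is not actually needed for that case; you only use $\ell_k\to\infty$, $m_{\ell_k}\to\infty$, and condition~(1).)

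The case $n = 2$, however, is a genuine gap. Your two suggestions are too vague to evaluate, and there is already a preliminary obstacle: condition~(3) does not obviously permit a subsequence with \emph{both} $m_{\ell_k}\to\infty$ and $B_{\ell_{k+1}}\subseteq B_{\ell_k}$, since the infinitely many $\ell$ with $B_\ell\subseteq B_r$ need not coincide with those having large $m_\ell$. The paper's argument handles all $n\geq 2$ uniformly by going the other way. It splits into two cases according to whether $\pi_\ell\phi_\ell(K\cap H_\ell)$ is trivial for infinitely many $\ell$. If so, an index computation gives $m_\ell!\leq [H_\ell:K\cap H_\ell]=n!$ for large such $\ell$, contradicting unboundedness. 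If not, one shows---by the mirror of your abelian-image trick, now applied to the pairwise commuting and mutually conjugate $K_j$'s---that each $\pi_\ell\phi_\ell(K_j\cap H_\ell)$ is abelian normal in $\Sym(m_\ell)$, hence trivial once $m_\ell\geq 5$; condition~(3) is then used to pass from ``each $K_j$ projects trivially in some large $H_s$'' to ``all of $K\cap H_r$ projects trivially,'' contradicting the case hypothesis. The subtlety is that writing $h\in K\cap H_r$ as $h_0\cdots h_{n-1}$ with $h_j\in K_j$ may force the $h_j$ into a larger $H_\ell$, and condition~(3) is precisely what lets one pull the conclusion back down to level $r$.
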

\begin{proof}
Let $\phi\colon G \wr \Sym(n) \to H$ and $\phi_{\ell} \colon H_{\ell} \to H'_{\ell} \wr \Sym(m_{\ell})$ be isomorphisms.
We denote by $G_j$ the subgroup of $G^n$ consisting of vectors $g = (g_0,\dots,g_{n-1})$ such that $g_i = 1$ if $i \in \{0,1,\dots,n-1\} \setminus \{j\}$.
We set $K = \phi(G^n \times \{1\})$ and $K_j = \phi(G_j \times \{1\})$.
Note that $G_j \times \{1\} \trianglelefteq G^n \times \{1\} \trianglelefteq G \wr \Sym(n)$.
Hence, if we define 
\begin{equation*}
    \text{$K'_{\ell} = \phi_{\ell}(K \cap H_{\ell})$ and $K'_{j,\ell} = \phi_{\ell}(K_j \cap H_{\ell})$,}
\end{equation*}
then
\begin{equation} \label{eq:prop:main_alg_lemma2:nested_normal_subgroups}
    K'_{j,\ell} \trianglelefteq K'_{\ell} \trianglelefteq H'_{\ell} \wr \Sym(m_{\ell})
    \enskip \text{for all $\ell \geq 1$.}
\end{equation}
Let $\pi_{\ell}\colon H'_{\ell} \wr \Sym(m_{\ell}) \to \Sym(m_{\ell})$ the projection onto the last coordinate.

We suppose, with the aim of obtaining a contradiction, that 
\begin{equation} \label{eq:prop:main_alg_lemma2:contra_hip}
    \limsup_{\ell\to+\infty} m_{\ell} = +\infty.
\end{equation}
The proof continues with two cases.

Assume first that $\pi_{\ell}(K'_{\ell}) = \{1\}$ for all $\ell$ belonging to an infinite set $F \subseteq \N$.
Then, for such values of $\ell$,
\begin{equation} \label{eq:prop:main_alg_lemma2:0}
    [H'_{\ell} \wr \Sym(m_{\ell}) : K'_{\ell} ] \geq 
    [H'_{\ell} \wr \Sym(m_{\ell}) : {H'}_{\ell}^{m_{\ell}} \times \{1\}] = 
    m_{\ell}!
\end{equation}
Since $[G \wr \Sym(n): G^n \times\{1\}] = n!$, there are elements $g_1,\dots,g_{n!} \in G \wr \Sym(n)$ such that the cosets $g_j (G^n\times\{1\})$ form a partition of $G \wr \Sym(n)$.
Then, as $\phi$ is an isomorphism,
\begin{equation} \label{eq:prop:main_alg_lemma2:1}
    \text{the cosets $\{\phi(g_j) K : j \in \{1,\dots,n!\}\}$ form a partition of $H$.}
\end{equation}
Using \eqref{eq:prop:main_alg_lemma2:contra_hip} and Item (1) of the hypothesis, we can find $\ell \in F$ big enough so that $m_{\ell} > n$ and $\phi(g_j) \in H_{\ell}$ for every $j \in \{1,\dots,n!\}$.
The last condition and \eqref{eq:prop:main_alg_lemma2:1} imply that 
\begin{equation*}
    \{\phi(g_j) (K \cap H_{\ell}) : 1 \leq j \leq n!\}
    \ \text{form a partition of $H_{\ell}$.}
\end{equation*}
Given that $\phi_{\ell}$ is an isomorphism, we get that $[H'_{\ell} \wr \Sym(m_{\ell}) : K'_{\ell} ] = n! < m_{\ell}!$, which contradicts \eqref{eq:prop:main_alg_lemma2:0}.

\medskip
Let us now assume that
\begin{equation} \label{eq:prop:main_alg_lemma2:hip_last_case}
    \text{$\pi_{\ell}(K'_{\ell}) \neq \{1\}$ for every large enough $\ell$.}
\end{equation}
Equation \eqref{eq:prop:main_alg_lemma2:nested_normal_subgroups} implies that 
\begin{equation} \label{eq:prop:main_alg_lemma2:normal_chain}
    \pi_{\ell}(K'_{j,\ell}) \trianglelefteq \pi_{\ell}(K'_{\ell}) \trianglelefteq \Sym(m_{\ell})
    \enskip \text{for $j \in \{0,\dots,n-1\}$ and large $\ell$.}
\end{equation}
Hence, if $\ell$ is large enough so that \eqref{eq:prop:main_alg_lemma2:normal_chain} holds and $m_{\ell} \geq 5$, then \autoref{lem:symmetric_groups_structure} guarantees that, for every $j \in \{0,\dots,n-1\}$,
\begin{equation} \label{eq:prop:main_alg_lemma2:3}
    \text{$\pi_\ell(K'_{j,\ell})$ is equal to $\{1\}$, $A_{m_\ell}$ or $\Sym(m_\ell)$.}
\end{equation}
where $A_{m_\ell}$ is the alternating group on $m_\ell$ elements.
Now, since $G_i \times \{1\}$ is conjugate to $G_j \times \{1\}$ in $G \wr \Sym(n)$, for every $i,j \in \{0,\dots,n-1\}$ there is $\eta_{i,j} \in H$ such that $K_j = \eta_{i,j} K_i \eta_{i,j}^{-1}$.
Let $\ell \geq 1$ be any integer big enough so that $\eta_{i,j} \in H_{\ell}$ for all $i,j \in \{0,\dots,n-1\}$.
This condition ensures that $K'_{j,\ell} = \phi_{\ell}(\eta_{i,j}) K'_{i,j} \phi_{\ell}(\eta_{i,j})^{-1}$, and thus that 
\begin{equation} \label{eq:prop:main_alg_lemma2:4}
    \text{$\pi_{\ell}(K'_{i,\ell})$ is conjugate to $\pi_{\ell}(K'_{j,\ell})$ in $\Sym(m_{\ell})$ for all $i,j \in \{0,\dots,n-1\}$.}
\end{equation}
In view of \eqref{eq:prop:main_alg_lemma2:3}, we deduce that, for any large enough $\ell$,
\begin{equation} \label{eq:prop:main_alg_lemma2:5}
    \text{$\pi_{\ell}(K'_{i,\ell}) = \pi_{\ell}(K'_{j,\ell})$ for all $i,j \in \{0,\dots,n-1\}$.}
\end{equation}
Note that, since the elements of $G_i \times \{1\}$ commute with those of $G_j \times \{1\}$ if $i \neq j$, we have that the elements of $\pi_\ell(K'_{i,\ell})$ commute with those of $\pi_\ell(K'_{j,\ell})$.
Therefore, by \eqref{eq:prop:main_alg_lemma2:5}, the groups $\pi_{\ell}(K'_{j,\ell})$ are abelian for large $\ell$.
Combining this with \eqref{eq:prop:main_alg_lemma2:3} and the fact that $A_m$ is non-abelian when $m \geq 4$ yields that
\begin{equation} \label{eq:prop:main_alg_lemma2:6}
    \pi_{\ell}(K'_{j,\ell}) = \{1\}
    \enskip \text{for $j \in \{0,\dots,n-1\}$ and large $\ell$.}
\end{equation}
We use \eqref{eq:prop:main_alg_lemma2:6} to obtain a contradiction.

It follows from \eqref{eq:prop:main_alg_lemma2:hip_last_case} and \eqref{eq:prop:main_alg_lemma2:6} that there is a integer $r \geq 1$ such that $\phi_r(K'_r) \neq \{1\}$ and $\pi_{\ell}(K'_{j,\ell}) = \{1\}$ for all $\ell \geq r$ and $j \in \{0,\dots,n-1\}$.
The first condition permits to take $h \in K \cap H_r$ satisfying $\pi_r \phi_r(h) \neq 1$.
Now, since $G = G_0 G_1 \dots G_{n-1}$, we have that $K = K_0 K_1 \dots K_{n-1}$, and thus, by Item (1) of the hypothesis, that there exist $\ell \geq r$ and elements $h_j \in K_j \cap H_\ell$ such that $h = h_0 h_1 \dots h_{n-1}$.
We take, using Item (3) of the hypothesis, $s \geq \ell$ such that $\phi_s^{-1}(H'_s \times \{1\})$ is contained in $\phi_r^{-1}(H'_r \times \{1\})$.
Given that $s \geq r$, $\pi_{s}(K'_{j,s}) = \{1\}$ for every $j \in \{0,\dots,n-1\}$, so
\begin{equation*}
    \pi_s\phi_s(h) = 
    \pi_s\phi_s(h_{0}) \pi_s\phi_s(h_{1}) \dots \pi_s\phi_s(h_{n-1}) = 1.
\end{equation*}
We deduce, by the definition of $s$, that $h \in \phi_s^{-1}(H'_s \times \{1\}) \subseteq \phi_r^{-1}(H'_r \times \{1\})$.
This implies that $\pi_r\phi_r(h) = 1$, contradicting the choice of $h$.
\end{proof}

\section{Recovering rational eigenvalues}
\label{sec:recover_eigs}
We present in this section our results on recovering rational eigenvalues from the stabilized automorphism group.
We start by proving \autoref{theo:recover_eigs:abstract} and \autoref{theo:recover_eigs:low_comp:abstract}.
Then, in \autoref{subsec:rec_eigs:trans-case}, two examples are constructed to show that the hypothesis of \autoref{theo:recover_eigs:abstract} cannot be weakened.
\begin{proof}[Proof of \autoref{theo:recover_eigs:abstract}]
Let $\phi \colon \StabAut(T) \to \StabAut(S)$ be an isomorphism and let $n \in \Eig(T) \setminus \{1\}$.
By symmetry, it is enough to show that $n \in \Eig(S)$.
We take $\ell \geq 1$ big enough so that $\phi^{-1}(S) \in \Aut(T^\ell)$, and set $\gamma = \phi(T)$.
Observe that, since $\phi^{-1}(S)$ commutes with $T^\ell$,
\begin{equation} \label{eq:theo:recover_eigs:abstract:gamma_ell_in_Aut_S}
    \text{$\gamma^\ell$ belongs to $\Aut(S)$.}
\end{equation}
We fix $m \in \Eig(S)$, with $m \geq 2$, and define $N = nm\ell$.
Let us write, using \autoref{decompose:trans_peig}, $k!N = p_k m_k$, where $S^{p_k}$ acts transitively on $Y$ and $m_k \in \CAP(S)$.
Being $(Y,S)$ minimal, \autoref{minimal_sofic=>cP} ensures that $m_k \in \Eig(S)$.
Then, in view of \eqref{eq:theo:recover_eigs:abstract:gamma_ell_in_Aut_S}, we can use \autoref{prop:C(gamma)_as_wreathProd} with $\gamma^N$, $p_k$ and $m_k$ to obtain $j_k \in \{0,\dots,m_k-1\}$ such that $\gamma(X_{m_k}) = S^{j_k} X_{m_k}$ and, if $r_k = \gcd(j_k, m_k)$, then there is an isomorphism
\begin{equation} \label{eq:theo:recover_eigs:abstract:defi_phi_k}
    \phi_k \colon C_{\Aut(S^{k!N})}(\gamma^N) \to 
    H_k \wr \Sym(r_k),
\end{equation}
where $H_k = C_{\Aut(S^{k!N}|_{Y_{r_k}})}(\gamma^N)$ and $Y_{r_k} \subseteq Y$ defines a cyclic partition of size $r_k$.
\medskip

We continue with some observations about $\Aut(T^N)$.
Thanks to \autoref{decompose:trans_peig}, we can write $N = pq$, where $T^p$ acts transitively on $X$ and $q \in \CAP(T)$.
Note that, by \autoref{transpowers}, $\gcd(n, p) = 1$.
Hence, since $N = nm\ell = pq$, we deduce that
\begin{equation} \label{eq:theo:recover_eigs:abstract:n_divides_q}
    \text{$n$ divides $q$; in particular, $q \geq n \geq 2$.}
\end{equation}
Also, as $(X,T)$ is minimal, \autoref{minimal_sofic=>cP} guarantees that $\Eig(T) = \CAP(T)$, so $q \in \Eig(T)$.
This permits to use \autoref{semidirectproduct} and write 
\begin{equation*}
    \text{$\Aut(T^N) \cong \Aut(T^N|_{X_q}) \wr \Sym(q)$,}
\end{equation*}
where $X_q \subseteq X$ defines a cyclic partition of size $q$.
Note that, since $\Aut(T^N)$ is the centralizer of $T^N$, $\phi$ maps $\Aut(T^N)$ onto $C(\gamma^N)$.
Thus,
\begin{equation} \label{eq:theo:recover_eigs:abstract:iso_between_C}
    \Aut(T^N|_{X_q}) \wr \Sym(q) \cong
    \Aut(T^N) \cong C(\gamma^N).
\end{equation}
\medskip

We now consider two cases.
Assume first that $(r_k)_{k\geq1}$ is bounded.
Then, there exist $r \geq 1$ and an infinite set $F \subseteq \N$ such that $r_k = r$ for all $k \in F$.
Observe that if $\pi_k\colon H_k \wr \Sym(r_k) \to \Sym(r_k)$ is the factor onto the last coordinate, then \autoref{prop:C(gamma)_as_wreathProd} ensures that, for any $k \in F$,
\begin{multline} \label{eq:theo:recover_eigs:abstract:g_permutes}
    \text{$g(S^i Y_r) = S^{\sigma(i)} Y_r$ whenever
        $g \in C_{\Aut(S^{k!N})}(\gamma^N)$, } \\ \text{ 
        $i \in \{0,\dots,m_k-1\}$ and $\sigma = \pi_k\phi_k(g)$.}
\end{multline}
This implies that, if $k \leq k'$ are in $F$, then the restriction of $\pi_{k'}\phi_{k'}$ to $C_{\Aut(S^{k!N})}(\gamma^N)$ is equal to $\pi_k\phi_k$.
For these values of $k$ and $k'$, we also have that
\begin{equation*}
    H_k = C_{\Aut(S^{k!N}|_{Y_r})}(\gamma^N) \subseteq 
    C_{\Aut(S^{(k+1)!N}|_{Y_r})}(\gamma^N) = H_{k+1}.
\end{equation*}
These two things imply, as the $\phi_k$ are isomorphisms, that
\begin{equation} \label{eq:theo:recover_eigs:abstract:C_gamma_N_as_wreath}
    C(\gamma^N) = \bigcup_{k\geq1} C_{\Aut(S^{k!N})}(\gamma^N)
    \cong H \wr \Sym(r), \
    \text{where $H = \bigcup_{k\in F} H_k$.}
\end{equation}
Putting this in \eqref{eq:theo:recover_eigs:abstract:iso_between_C} yields
\begin{equation} \label{eq:theo:recover_eigs:abstract:iso_between_wreaths}
    \text{$\Aut(T^N|_{X_q}) \wr \Sym(q)$ is isomorphic to $H \wr \Sym(r)$.}
\end{equation}
With the aim of using \autoref{prop:main_alg_lemma}, let us prove that $q, r \geq 2$.
We know, from \eqref{eq:theo:recover_eigs:abstract:n_divides_q}, that $q \geq 2$.
For $r$, we first recall that \eqref{eq:theo:recover_eigs:abstract:gamma_ell_in_Aut_S} states that $\gamma^\ell \in \Aut(S)$.
Hence, we can use \autoref{prop:C(gamma)_as_wreathProd} with $\gamma^\ell$ and $m_k \in \Eig(S)$ to obtain $j'_k \in \{0,\dots,m_k-1\}$ such that $\gamma^\ell(Y_{m_k}) = S^{j'_k} Y_{m_k}$ for every $k \in F$.
Hence, by the definition of $j_k$,
\begin{equation*}
    S^{j_k} Y_{m_k} = \gamma^N(Y_{m_k}) = 
     (\gamma^\ell)^{N/\ell} (Y_{m_k}) = 
    S^{j'_k \cdot N/\ell} Y_{m_k}.
\end{equation*}
This implies that $j_k = j'_k (N/\ell) \pmod{m_k}$.
Now, since we decomposed $N\cdot k! = nm\ell\cdot k!$ as $p_k m_k$ using \autoref{decompose:trans_peig} and since $m \in \Eig(S)$, we have that $m$ divides $m_k$.
Thus, $j_k = j'_k (N/\ell) \pmod{m}$.
Moreover, since $N = nm\ell$, $m$ divides $N/\ell$ and thus $j_k = 0 \pmod{m}$.
We conclude that $r_k = \gcd(j_k, m_k) \geq m \geq 2$, and thus that $r \geq 2$.

We have proved that $q, r \geq 2$.
This and \eqref{eq:theo:recover_eigs:abstract:iso_between_wreaths} allow us to apply \autoref{prop:main_alg_lemma}; so $q = r$.
We get that $q = r \in \Eig(S)$, and then, as $n$ divides $q$ by \eqref{eq:theo:recover_eigs:abstract:n_divides_q}, that $n \in \Eig(S)$.

\medskip
We now consider the second case.
Let us assume that $(r_k)_{k\geq1}$ is unbounded.
Our strategy is to use \autoref{prop:main_alg_lemma2} with $\Aut(T^N)$ and $C(\gamma^N)$.
The groups $\Aut(T^N)$ and $C(\gamma^N)$ are isomorphic by \eqref{eq:theo:recover_eigs:abstract:iso_between_C}.
We proved, in \eqref{eq:theo:recover_eigs:abstract:iso_between_C}, that $\Aut(T^N)$ has the form $G \wr \Sym(q)$, with $q \geq 2$.
From \autoref{eq:theo:recover_eigs:abstract:defi_phi_k} we have a description of $C(\gamma^N)$ as a non-decreasing union of the groups $C_{\Aut(S^{k!N})}(\gamma^N)$, which are isomorphic to the wreath products $H_k \wr \Sym(r_k)$.
It only rests to check Item (3) of the hypothesis of \autoref{prop:main_alg_lemma2}.
It is enough to prove that for $t \geq s \geq 1$, we have that $\phi_t^{-1}(H_t \times \{1\})$ is contained in $\phi_s^{-1}(H_s \times \{1\})$.
Let $g \in \phi_t^{-1}(H_t \times \{1\})$.
Then, by the description of $\phi_t$ from \autoref{prop:C(gamma)_as_wreathProd},
\begin{equation}  \label{eq:theo:recover_eigs:abstract:is_in_kernel}
    \text{$g(S^i Y_{r_t}) = S^i Y_{r_t}$ for every $i \in \{0,1,\dots,r_t-1\}$.}
\end{equation}
We have, by the definition of the $m_k$ and by \autoref{decompose:trans_peig}, that $m_s$ divides $m_t$.
Now, since $m_s, m_t \in \Eig(S)$, \autoref{CycPart2divisor} permits to assume that
\begin{equation*}
    Y_{m_s} = \bigcup_{0 \leq i < m_t/m_s} S^{im_s} Y_{m_t}.
\end{equation*}
By applying $\gamma$ and using the definition of $j_s$ and $j_t$, we get that 
\begin{equation*}
    S^{j_s} Y_{m_s} = 
    \bigcup_{0 \leq i < m_t/m_s} S^{im_s+j_t} (Y_{m_t})
    = S^{j_t} Y_{m_s}.
\end{equation*}
Hence, as $Y_{m_s}$ is $S^{m_s}$-invariant, $j_s = j_t \pmod{m_s}$.
This and the fact that $m_s$ divides $m_t$ imply, by the definition of the $r_k$, that $r_s$ divides $r_t$.
Then, as we argued for $Y_{m_s}$ and $Y_{m_t}$, we may assume that 
\begin{equation*}
    Y_{r_s} = \bigcup_{0 \leq i < r_t/r_s} S^{ir_s} (Y_{r_t}).
\end{equation*}
From this and \eqref{eq:theo:recover_eigs:abstract:is_in_kernel} we deduce that $g(S^i Y_{r_s}) = Y_{r_s}$ for every $i \in \Z$, that is, that $g$ belongs to $\phi_s^{-1}(H_s \times \{1\})$.


The hypothesis of \autoref{prop:main_alg_lemma2} are satisfied, so $\limsup_{k\to+\infty} r_k$ is finite.
This contradicts that $(r_k)_{k\geq1}$ is unbounded, and thus completes the proof of the theorem.
\end{proof}

\begin{proof}[Proof of \autoref{theo:recover_eigs:low_comp:abstract}]
Thanks to \autoref{theo:recover_eigs:abstract} and a symmetry argument, we only have to consider the case in which $(X,T)$ does not have rational eigenvalues different from $1$.
We argue by contradiction and assume that $\Eig(S)$ contains an element $m$, with $m \geq 2$.
Let $\phi\colon\StabAut(T) \to \StabAut(S)$ be an isomorphism.
Then, by \autoref{semidirectproduct}, $\StabAut(S)$ has a subgroup isomorphic to $\Aut(S^m|_{Y_m}) \wr \Sym(m)$.
In particular, $\StabAut(S)$ contains a copy of $\Z \wr \Sym(m)$.
Now, being $(X,T)$ minimal, $\Eig(T) = \CAP(T)$ by \autoref{minimal_sofic=>cP}.
Hence, as $\Eig(T) = \{1\}$, and \autoref{transpowers} imply that $\StabAut(T) = \Aut_{M(T)}(T)$.
We deduce that 
\begin{equation} \label{eq:theo:recover_eigs:low_comp:has_big_subgroup}
    \text{$\Aut_{M(T)}(T)$ contains a copy of $\Z \wr \Sym(m)$.}
\end{equation}
Since $\Z \wr \Sym(m)$ is non-abelian (as $m \geq 2$), \autoref{eq:theo:recover_eigs:low_comp:has_big_subgroup} gives rise to a contradiction when $\Aut_{M(T)}(T)$ is abelian.
Moreover, $\Z \wr \Sym(m)$ has a copy of $\Z^2$, so \autoref{eq:theo:recover_eigs:low_comp:has_big_subgroup} produces a contradiction if $\Aut_{M(T)}(T)$ is virtually $\Z$. 
\end{proof}

\begin{corollary} \label{theo:eigs_Toeplitz}
Let $(X,T)$ and $(Y,S)$ be Toeplitz subshifts.
If their stabilized automorphism groups are isomorphic, then the systems have the same odometer as the maximal equicontinuous factor.
\end{corollary}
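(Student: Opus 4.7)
The plan is to reduce the corollary to \autoref{theo:recover_eigs:abstract} and then invoke the classical description of the maximal equicontinuous factor of a Toeplitz subshift in terms of its rational eigenvalues.

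First, I would observe that Toeplitz subshifts are minimal, so the minimality hypothesis of \autoref{theo:recover_eigs:abstract} is automatic. Any aperiodic Toeplitz subshift $(X,T)$ has a non-trivial odometer as its maximal equicontinuous factor, which forces $\Eig(T)$ to contain some $q \geq 2$ (any essential period of a defining Toeplitz sequence lies in $\Eig(T)$). The same goes for $(Y,S)$. The short degenerate case in which one of the two systems is a single periodic orbit can be dispatched separately: the stabilized automorphism group of a periodic orbit of length $N$ is finite and encodes $N$, so an isomorphism of stabilized automorphism groups forces the other system to be a periodic orbit of the same length, yielding the same finite maximal equicontinuous factor.

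Next, I would apply \autoref{theo:recover_eigs:abstract} directly to conclude that $\Eig(T) = \Eig(S)$.

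Finally, I would invoke the classical fact that the maximal equicontinuous factor of a Toeplitz subshift $(X,T)$ is the odometer $\varprojlim_{q} \Z/q\Z$, where the inverse limit is taken over $q \in \Eig(T)$ as a directed set under divisibility (a direction guaranteed by \autoref{Peig:props}). This inverse limit depends only on the set $\Eig(T)$ (equivalently, on its associated supernatural number), and two odometers of this form are topologically conjugate if and only if their underlying eigenvalue sets agree. Hence the equality $\Eig(T) = \Eig(S)$ yields the desired conjugacy of maximal equicontinuous factors, and the corollary follows.

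The main obstacle has already been absorbed into \autoref{theo:recover_eigs:abstract}; what remains here is only the structural fact that, for Toeplitz subshifts, the rational spectrum completely determines the odometer factor, which is well known and requires no additional work.
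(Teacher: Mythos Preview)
Your proposal is correct and follows essentially the same route as the paper: both reduce the corollary to \autoref{theo:recover_eigs:abstract} by noting that Toeplitz subshifts are minimal with a non-trivial rational eigenvalue, and then pass from $\Eig(T)=\Eig(S)$ to equality of the odometer factors. The paper's proof additionally records that $\Aut(T)$ is abelian and that $\Aut^{M(T)}(T)=\Aut(T)$ (facts relevant to \autoref{theo:recover_eigs:low_comp:abstract}), but these are not needed once \autoref{theo:recover_eigs:abstract} is invoked, so your streamlined version is entirely adequate.
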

\begin{proof}
It is known that $(X,T)$ is minimal, has infinitely many rational eigenvalues, and $\Aut(T)$ is abelian.
Moreover, it was shown in \cite{jones} that $\Aut_{M(T)}(T) = \Aut(T)$.
The same considerations hold for $(Y,S)$.
So, the result follows from \autoref{theo:recover_eigs:abstract}.
\end{proof}

\begin{corollary} \label{theo:eigs_odometers}
Let $(X,T)$ and $(Y,S)$ be odometers.
If their stabilized automorphism groups are isomorphic, then the systems are conjugate.
\end{corollary}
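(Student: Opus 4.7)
The plan is to combine \autoref{theo:recover_eigs:abstract} with the classical fact that odometers are classified up to conjugacy by their rational eigenvalue spectrum. Recall that an odometer is the inverse limit of finite cyclic systems associated to a supernatural number $\mathfrak{n} = \prod_p p^{n_p}$, and that its rational eigenvalues are precisely $\{\exp(2\pi i k/m) : m \text{ divides } \mathfrak{n}, k \in \Z\}$. Hence $\Eig(T) = \{m \geq 1 : m \mid \mathfrak{n}\}$ determines $\mathfrak{n}$, and thus determines the odometer up to conjugacy.

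First, I would observe that odometers are minimal systems, so \autoref{theo:recover_eigs:abstract} is applicable provided both $(X,T)$ and $(Y,S)$ have a non-trivial rational eigenvalue. Under that assumption, the theorem gives $\Eig(T) = \Eig(S)$, and the classification recalled above immediately yields a conjugacy between $(X,T)$ and $(Y,S)$.

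The only remaining case is when one of the systems, say $(X,T)$, has no rational eigenvalue other than $1$. By the description of $\Eig(T)$ for odometers, this forces $\mathfrak{n} = 1$, meaning that $X$ is a single point and $\StabAut(X,T)$ is trivial. Since the stabilized automorphism groups are isomorphic, $\StabAut(Y,S)$ is also trivial; but any non-trivial odometer already contains $\Z$ in its (stabilized) automorphism group through the shift action, so $(Y,S)$ must also be a single-point system, hence conjugate to $(X,T)$.

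The main substance of the proof is just invoking \autoref{theo:recover_eigs:abstract}; no obstacle is expected beyond verifying the classical classification of odometers by their rational spectrum, which is standard.
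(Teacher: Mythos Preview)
Your proposal is correct and follows essentially the same route as the paper: both invoke \autoref{theo:recover_eigs:abstract} on minimal odometers and then appeal to the classical classification of odometers by their rational spectrum (the paper phrases this via equicontinuity plus identical spectrum, you via supernatural numbers). The paper simply asserts that any odometer has infinitely many rational eigenvalues, implicitly taking odometers to be non-trivial, whereas you handle the degenerate one-point case separately; this is a harmless difference of convention.
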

\begin{proof}
Any odometer is minimal and has infinitely many rational eigenvalues.
Hence, we can use \autoref{theo:recover_eigs:abstract} to deduce that $(X,T)$ and $(Y,S)$ have the same rational eigenvalues.
An odometer does not have irrational eigenvalues, so these systems have the same spectrum.
As they are equicontinuous, it follows that $(X,T)$ is conjugate to $(Y,S)$.
\end{proof}

\subsection{The transitive case}
\label{subsec:rec_eigs:trans-case}
In this subsection, we elucidate, through two examples, the shortcomings of the methods employed to establish \autoref{theo:recover_eigs:abstract} when the minimality hypothesis is relaxed to transitivity. The first example \ref{example1} exhibits a system $(X,T)$ with no rational eigenvalues but such that $(X,T)$ has a cyclic almost-partition of size $2^n$ for all $n>0$. In this example, the a cyclic almost-partition of size $q$ have the same effect in the stabilized automorphism group as an eigenvalues equal to $1/q$ would in the minimal case. Namely, on powers equal to $2^n$ for $n>0$, $\Aut(X,T^{2^n})$ is a wreath product with $\Sym(2^n)$. One might anticipate that in transitive systems, given that the condition $\Eig(T) = \CAP(T)$ is not necessarily satisfied, comparable outcomes to those in the minimal case could be attained, albeit with $(X,T)$ has a cyclic almost-partition instead of eigenvalues. However, this is not the case. Our second example \ref{example2}, presents a system $(X,T)$ that has a cyclic almost-partition of size $3^n$ for all $n>0$ but with trivial stabilized automorphism group.

\subsubsection{Example 1} \label{example1}
    This is an example of a transitive system $(X,T)$ with no eigenvalues, such that $T^{2^n}$ is not transitive and $\Aut(X,T^{2^n})$ is isomorphic to a wreath product with $\Sym(2^n)$.

    Let $b_n=10^{2^n-1}$ and $A_0$ be the empty word. Define $A_n$ iteratively as $A_{n}=A_{n-1}A_{n-1}b_n$ and let $\hat{A}$ to be the infinite word defined by the limit of this recursive relation. Take $u=(u_i) \in \{0,1\}$ defined as $u_i=0$ for all $i<0$ and $u_{[0,\infty)}=\hat{A}$.  Consider $T$ the shift and $X=\overline{\mathcal{O}_T(u)}$. Notice $(X,T)$ is transitive, and the point $\overline{0}=\{0\}^\Z\in X$. Since $\overline{0}$ is a fixed point of $(X,T)$, we have that $(X,T)$ has no eigenvalues.

    For all $n\geq 1$, notice that $b_{n+1}=b_n0^{2^{n}}$. Hence, by the recursion formula, all $x\in \mathcal{O}(u)$, $x$ can be written uniquely
    as a sequence with elements in $\{A_{n-1}, b_n, 0^{2^{n}}\}$. Since $|b_n|=2^n$, by the recursion formula $|A_{n-1}|=2|A_{n-2}|+|b_n|$, we have that $|A_n|$ is a multiple of $2^n$. This implies that every element in $\mathcal{O}(u)$ belongs to one and only one of the following sets:
    $$X_k=\{x\in X: \text{if } x_{[i, i+2^{n}-1]}=b_n \text{ then } i= k\pmod{ 2^n}\}, \quad \text{ for } k=0,1,...,2^n-1.$$
    It is easy to verify that $X_k$ is closed for $k=0,1,...,2^n-1$.  We now show $X=\bigcup_{k=0}^{2^n-1} X_k$ and $X_i \cap X_j=\{\overline{0}\}$ for $i\neq j$. It is clear that $u\in X_0$ and $T^m(u)\in X_{m \pmod {2^n}}$ for all $n\in \N$. Take $x=(x_i)\in X$, then there exists a sequence $(m_j)$ such that $T^{m_j}(u)$ converges to $x$. Hence, for all $M\in \N$, there exits  $j\in \N$ such that $T^{n_j}(u)_i=x_i$ for all $i$ with $|i|\leq M$. Since each element in the sequence $(T^{n_j}(u))$ belongs to a set $X_k$, this shows that the indices of all occurrences of $b_n$ in $x_{[-M,M]}$ occur at the same number modulo $2^n$. Since this is true for all $M\in \N$, we conclude that $x\in \bigcup_{k=0}^{2^n-1} X_k$. Moreover, this shows that for all $x=(x_i)\in X\setminus\{\overline{0}\}$, if $(m_j)$ is a sequence such that $T^{m_j}(u)$ converges to $x$, then there exists $K\in \N$ such that $T^{n_j}(u)\in X_k$ for all $j\geq K$ and some $k\in \{0,1,...,2^n-1\}$ fixed. That is, there exists $K\in \N$ such that $m_j=m_K\pmod 2^n$ for all $j\geq K$. We refer to this as property $(\star)$. Notice that this property also implies that $X_i \cap X_j=\{\overline{0}\}$.

    To show that $\Aut(X,T^{2^n})$ contains a wreath product with $\Sym(2^n)$ we first prove the existence of a surjective homomorphism $\iota\colon\Aut(X,T^{2^n})\to \Sym(2^n)$. It is clear that $T^{2^n}$ has $\overline{0}$ as a fixed point and leaves $X_k$ invariant for $k=1,2,...,2^n-1$. Let $g\in \Aut(X,T^{2^n})$. Take $x\in X_0\setminus\{\overline{0}\}$, by property $(\star)$ then there exists a sequence $(n_i)$ such that $n_i=0\pmod 2$ and $T^{n_i}(u)$ converges to $x$. Since $T$ and $g$ commute and $T^{2^n}$ leaves $X_k$ invariant for $k=1,2,...,2^n-1$, this implies that if $g(u)\in X_j$ then $g(x)\in X_j$ for $j=0,1,...,2^n$. This shows that if $g(u)\in X_j$, then $g(X_0)\subseteq X_j$. Equivalently, we can show that if $g(T^k(u))\in X_j$, then $g(X_k)\subseteq X_j$ for all $k=1,...,2^n$. Since $g$ is a bijection on $X$, this shows that $g$ defines a permutation of the sets ${X_0, X_1,...,X_{2^n-1}}$. This implies the existence of a map $\iota\colon\Aut(X,T^{2^n})\to \Sym(2^n)$.

    We now show $\iota$ is surjective. Notice that for all $i,j\in \{0,1,...,2^n-1\}$ we have that $t_{j,i}:=T^{i-j}$ defines a conjugacy between $(X_j, T^{2^n}|_{X_j})$ and $(X_i, T^{2^n}_{X_i})$. Let $\pi\in \Sym(2^n)$. Define $\phi_\pi\in \Aut(X,T^{2^n})$ such that $X_i$ is mapped to $X_{\pi(i)}$ via $t_{i,\pi(i)}$. Since the sets ${X_0, X_1,...,X_{2^n-1}}$ are closed and the only point in their intersection is fixed by all $t_{i,j}$, $\phi_\pi$ is continuous. It is clear that $\phi_\pi$ commutes with $T^{2^n}$. We can conclude that $\Phi_\pi$ is an automorphism of $(X,T^{2^n})$. Notice $\iota(\phi_\pi)=\pi$. Thus $\iota$ is surjective. It is easy to verify that $\iota$ is a group homomorphism.

    For all $k\in\{0,1,...,2^n-1\}$ fix $g_k\in \Aut (X_k,T^{2^n}|_{X_k})$. We can define a map $g\in \Aut(X, T^2)$ 
    as $g(x)=g_k(x)$ for all $x\in X_k$ and all $k\in\{0,1,...,2^n-1\}$. Notice that since $\overline{0}$ is a fixed point of $T^{2^n}$ this map is well defined on the intersection. Moreover, since the sets ${X_0, X_1,...,X_{2^n-1}}$ are closed, $g$ in continuous and since $g_k$ commutes with $T^{2^n}$ for all $k\in\{0,1,...,2^n-1\}$, so does $g$. Since $(X_0, T^{2^n}|_{X_0})$ and $(X_k, T^{2^n})$ are conjugate for $k=0,1,...,2^{n}-1$. Hence, we have shown that there exists an injective map $\Phi\colon\Aut(X_0, T^2|_{X_0})^{2^n}\to \Aut(X, T)$. Additionally, the following sequence is exact. 
\begin{equation*}
\begin{tikzcd}[column sep=3ex]
1 \arrow[r] & \Aut(X_0, T^{2^n}|_{X_0})^{2
^n} \arrow[r, "\Phi"]  & { \Aut(X,T^{2^n})} \arrow[r, "\iota"]  & \Sym(2^n) \arrow[r] \arrow[l, dotted, bend right=-33, "s"] & 1 
\end{tikzcd}            
\end{equation*} 
This sequence splits as the map $s\colon\Sym(2^n)\to\Aut(X,T^{2^n})$ as $s(\pi)=\phi_\pi$ is a group homomorphism such that $\iota\circ s(\pi)=\pi$ for all $\pi\in \Sym(2^n)$. We conclude $\Aut(X,T^{2^n})\cong \Aut(X_0, T^{2^n}|_{X_0})^{2
^n}\rtimes \Sym(2^n)$. It is not hard to verify that this semi-direct product is in fact a wreath product, for more details see Proposition 2.3 of \cite{jones}.

\subsubsection{Example 2:}\label{example2} This is an example of a transitive system $(X,T)$ that has a cyclic almost-partition of size $3^n$ for all $n>0$ but whose stabilized automorphism group consists only of powers of $T$. For this example, we use the notion of a Sturmian subshift. To read an in depth presentation of Sturmian subshifts we refer the reader to Chapter 32 of \cite{fogg}, for example.


 Let $(\hat F, T)$ be a Sturmian subshift given by the two sided Sturmian sequence $F=(x_i)_{i\in \Z}$.  By \cite{olli}, $\Aut(\hat F, T)=\langle T\rangle\cong\Z$. Using this fact, in \cite{stab}, it was proven that $\StabAut(\hat F, T)=\langle T\rangle\cong\Z$.

We proceed with the construction of the example. Take $A_0=\alpha\alpha\alpha$, where $\alpha$ denotes a symbol different from $0$ and $1$. Define $b_n=\alpha F_{[1,3^{n}-2]}\alpha$ for all $n\geq 1$ and $A_{n+1}=A_nb_nA_n$, for $n>1$. Notice $|A_n|=3^{n+1}$ for all $n\geq 1$. Define $\hat A$ to be the limit of this iterative process and $u$ to be the bi-infinite sequence given by $u_{(-\infty ,-1]}=F_{(-\infty, -1]}$ and $u_{[0,\infty)}=\hat A$. Let $X=\overline{\mathcal{O}_T(u)}$. By construction, $(X,T)$ is a transitive system. Moreover, notice that $\hat F\subseteq X$.

For all $n>1$, using the recursive relation, each $A_k$ with $k>n$  can be written as a sequence of elements in $\{A_n\}\bigcup \{b_i: n\leq i \leq k\}$ with no consecutive instances of $A_n$. By analyzing the positions in which the symbol $\alpha$ occurs in $A_k$ and since there are no consecutive appearances of $A_n$, we can see that all occurrences of $b_n$ in $A_k$ occur at the same integer mod $3^n$. Hence, each element in $\mathcal{O}(u)$ belongs to one and only one of the following sets
$$X_k=\{x\in X: \text{if } x_{[i, i+3^{n}-1]}=b_n \text{ then } i= k\pmod{ 3^{n}}\}, \quad \text{ for } k=0,1,...,3^n-1.$$ In the same manner as in the previous example, we can approximate each element in $X$ by elements in $\mathcal{O}(u)$ to conclude that $X=\bigcup_{i=0}^{3^n-1}X_i$ and $\bigcap_{i=0}^{3^n-1}X_i=\hat F$.

For a fixed $n>0$, define the following subsets of $X$ 
$$R=\{(y_i)\in X: \text{ there exist } N\in \Z \text{ such that }y_N=\alpha \text{ and }y_i\neq \alpha,\, \forall i> N \}$$
$$L=\{(y_i)\in X: \text{ there exist } N\in \Z \text{ such that }y_N=\alpha \text{ and }y_i\neq \alpha,\, \forall i< N \}$$
$$\Gamma=\{y\in X: y \text{ can be written as a sequence in }\{A_nb_i: i\geq n\}\}$$

Similarly to the methods used before and in the previous example, one can show that $X=R\cup L\cup \Gamma\cup\hat F$. Moreover, the sets $A=R\cup L$, $\Gamma$ and $\hat F$ are pairwise disjoint. It can easily be shown that $(\hat F, T|_{\hat F})$ is the only minimal subsystem of $(X,T)$. Hence, for any $\phi\in \Aut(X,T)$, we have that $\phi$ leaves $\hat F$ invariant, that is, $\phi(\hat F)=\hat F$. Hence, $\phi|_{\hat F}\in \Aut(\hat F, T)$. By the Stabilized Curtis-Hedlund-Lyndon Theorem from \cite{stab} and since $\Aut(\hat F, T)=\langle T \rangle$, $\phi$ acts like the shift on blocks of length $2 r_{\phi}+1$ that do not contain the symbol $\alpha$. This shows that $\phi$ leaves the sets $R$ and $L$ invariant. Since $\phi$ is a bijection, we can conclude $\phi$ maps $\Gamma$ to itself. 

Let $u=(u_i)_{i\in\Z}\in \Gamma$, then for a sequence $(i_j)_{j\in \Z}$, we can rewrite $u=(A_nb_{i_j})_{j_\in \Z}$. By picking $n$ large enough, much larger than $r_{\phi}$, there is a large portion of each $b_{i_j}$ block that is mapped to a shifted copy of itself in $\phi(u)$. Additionally, the small pieces of each $A_n$ that correspond to blocks $b_\ell$ for $\ell$'s that are large enough have sections of themselves that are also mapped to shifted copies of themselves in $\phi(u)$. Since $\phi(u)$ belongs to $\Gamma$, this allows us identify the positions of the blocks $A_n$. Which in turn allows us to identify the coordinates where the blocks $b_{i_j}$ begin and end. With these observations, the image of $u$ under $\phi$ is uniquely determined and it corresponds to a shift of $u$. We illustrate this process in the following picture where the red segments correspond to sections of potential ambiguity that can be deduced by the process explained above.
\begin{center}
    \begin{tikzpicture}[scale=0.85]
 \draw (0,0.5) -- (12,0.5);
  \draw (0,0) -- (12,0);
   \draw (0,0) -- (0,0.5);
    \draw (1,0) -- (1,0.5);
    \draw (3,0) -- (3,0.5);
    \draw (4,0) -- (4,0.5);
    \draw (6,0) -- (6,0.5);
    \draw (7,0) -- (7,0.5);
    \draw (9,0) -- (9,0.5);
    \draw (10,0) -- (10,0.5);
    \draw (12,0) -- (12,0.5);

    \draw (1,-0.5) -- (13,-0.5);
    \draw (1,-1) -- (13,-1);
    
    \draw [line width=5pt, red] (1,-1) -- (1,-0.5);
    \draw [line width=5pt, red] (2,-1) -- (2,-0.5);
    \draw [line width=5pt, red] (4,-1) -- (4,-0.5);
    \draw [line width=5pt, red](5,-1) -- (5,-0.5);
    \draw [line width=5pt, red] (7,-1) -- (7,-0.5);
    \draw [line width=5pt, red](8,-1) -- (8,-0.5);
    \draw [line width=5pt, red](10,-1) -- (10,-0.5);
    \draw [line width=5pt, red](11,-1) -- (11,-0.5);
    \draw[line width=5pt, red]  (13,-1) -- (13,-0.5);

    \draw [->] (0.5,-0.1) -- (1.5,-0.4);
    \draw [->] (2,-0.1) -- (3,-0.4);
    \draw [->] (3.5,-0.1) -- (4.5,-0.4);
    \draw [->] (5,-0.1) -- (6,-0.4);
    \draw [->] (6.5,-0.1) -- (7.5,-0.4);
    \draw [->] (8,-0.1) -- (9,-0.4);
    \draw [->] (9.5,-0.1) -- (10.5,-0.4);
    \draw [->] (11,-0.1) -- (12,-0.4);

    \draw (0.5,0.25) node {$A_n$};
    \draw (2,0.25) node {$b_{i_0}$};
    \draw (3.5,0.25) node {$A_n$};
    \draw (5,0.25) node {$b_{i_1}$};
   \draw (6.5,0.25) node {$A_n$};
   \draw (8,0.25) node {$b_{i_2}$};
    \draw (9.5,0.25) node {$A_n$};
    \draw (11,0.25) node {$b_{i_3}$};

    \draw (1.5,-0.75) node {$A_n$};
    \draw (3,-0.75) node {$b_{i_0}$};
    \draw (4.5,-0.75) node {$A_n$};
    \draw (6,-0.75) node {$b_{i_1}$};
   \draw (7.5,-0.75) node {$A_n$};
   \draw (9,-0.75) node {$b_{i_2}$};
    \draw (10.5,-0.75) node {$A_n$};
    \draw (12,-0.75) node {$b_{i_3}$};

    \draw (-0.5,0.25) node {$\dots$};
    \draw (12.5,0.25) node {$\dots$};
    \draw (0.5,-0.75) node {$\dots$};
    \draw (13.5,-0.75) node {$\dots$};
    
    \draw (-1.25,0.25) node {$u=$};
    \draw (-1,-0.75) node {$\phi(u)=$};

\end{tikzpicture}
\end{center}

Since every finite word in $X$ appears as a subword of an element in $\Gamma$, by Stabilized Curtis-Hedlund-Lyndon Theorem from \cite{stab}, we conclude that $\phi=T^k$ for some $k\in \Z$. This proves $\StabAut(X,T)=\langle T \rangle$.


\section{Systems with a finite number of asymptotic classes} \label{section:FiniteAS}

For any system $(X,T)$, we say that two points $x$ and $y$ in $X$ are \textit{asymptotic} if $d(T^nx,T^ny)\to 0$ as $n\to \infty$. Define the equivalence relation $\sim$ on $X$ where $x\sim y$ if and only if $x$ is asymptotic to $T^ky$ for some $k\in \Z$. If $x\sim y$, then ${x,y}$ is called an \textit{asymptotic pair}, and the equivalence classes under $\sim$ (that are not just one orbit) are called the \textit{asymptotic components} of $(X,T)$. The set of all asymptotic components is denoted by $\mathcal{AS}(X,T)$.

In this section, we study the stabilized automorphism group of systems with a finite number of asymptotic components. This constitutes a large class of systems that includes many important sub-classes, such as systems of finite topological rank (for more information on this class, refer to \cite{DownMaass} or \cite{EspinozaMaass}). This subclass, in turn, includes subshifts with non-superlinear complexity, {\em i.e.}, systems satisfying
$\liminf_{n\to \infty}p_X(n)/n<\infty$ (for further details on this class, see \cite{CyrKralinear}, \cite{ddmp}).

We begin the study of the stabilized automorphism of this class of systems by proving the following two lemmas. 

\begin{lemma}\label{point_trans_for_power}
Let $(X,T)$ be a system with a transitive point $\hat{x}$.
If $n \geq 1$ is such that $T^n$ acts transitively on $X$, then $\hat{x}$ is transitive for $(X,T^n)$.
\end{lemma}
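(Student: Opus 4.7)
The plan is to analyze the closure $Y = \overline{\mathcal{O}_{T^n}(\hat{x})}$ and show that it is all of $X$. The structure of the argument is to first cover $X$ by finitely many translates of $Y$, and then use the existence of some $T^n$-transitive point to collapse one of these translates to $X$.

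First, I would observe that every integer can be written as $i + kn$ with $0 \leq i < n$ and $k \in \Z$, so
\begin{equation*}
    \mathcal{O}_T(\hat{x}) = \bigcup_{i=0}^{n-1} T^i\bigl(\mathcal{O}_{T^n}(\hat{x})\bigr) \subseteq \bigcup_{i=0}^{n-1} T^i(Y).
\end{equation*}
Since $T$ is a homeomorphism, each $T^i(Y)$ is closed, so the right-hand side is a closed set. Because $\hat{x}$ is $T$-transitive, taking closures yields
\begin{equation*}
    X = \overline{\mathcal{O}_T(\hat{x})} = \bigcup_{i=0}^{n-1} T^i(Y).
\end{equation*}
Moreover, $Y$ is $T^n$-invariant by construction, and hence so is each $T^i(Y)$.

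Next, I would invoke the hypothesis that $(X, T^n)$ is transitive to pick a transitive point $y$ for $T^n$. By the decomposition above, $y \in T^{i_0}(Y)$ for some $i_0 \in \{0, 1, \dots, n-1\}$. Since $T^{i_0}(Y)$ is closed and $T^n$-invariant, it contains $\overline{\mathcal{O}_{T^n}(y)} = X$, forcing $T^{i_0}(Y) = X$. Applying the homeomorphism $T^{-i_0}$ gives $Y = X$, which is precisely the statement that $\hat{x}$ is transitive for $T^n$.

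There is essentially no serious obstacle here: the only ingredient beyond routine topology is recognizing that the $T^n$-transitive point can be used to upgrade one piece of the decomposition of $X$ into a full copy of $X$. Provided one is careful that $T$ (and its inverse) is a homeomorphism of $X$ onto itself (so that $T^i(Y)$ is closed and translation by $T^{-i_0}$ is a bijection of $X$), the argument is short and self-contained.
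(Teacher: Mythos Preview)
Your proof is correct and follows essentially the same approach as the paper: both define $Y=\overline{\mathcal{O}_{T^n}(\hat{x})}$, observe that $X=\bigcup_{i=0}^{n-1}T^i(Y)$, and then use transitivity of $T^n$ to force $Y=X$. The only cosmetic difference is that the paper notes $Y$ itself has non-empty interior (by Baire) and concludes $Y=X$ directly, whereas you pick a $T^n$-transitive point in some translate $T^{i_0}(Y)$ and then apply $T^{-i_0}$; the underlying idea is the same.
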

\begin{proof}
Let $Y$ be the closure of the $T^n$-orbit of $\hat{x}$.
Note that $Y \cup T(Y) \cup \dots \cup T^{n-1}(Y)$ is equal to the $T$-orbit of $\hat{x}$, which is equal to $X$.
So, $Y$ has non-empty interior.
Being $Y$ a closed $T^n$-invariant set, we deduce from the transitivity of $T^n$ that $Y = X$, that is, that $\hat{x}$ is transitive for $T^n$.
\end{proof}

\begin{lemma}\label{bound}
Let $(X,T)$ be a subshift with $K$ asymptotic components.
Assume that there is a  transitive point $\hat{x} \in X$ that is asymptotic to a different point. 
If $n \geq 1$ is such that $T^n$ acts transitively on $X$, then $|\Aut(T^n) / \langle T\rangle| \leq K$.
\end{lemma}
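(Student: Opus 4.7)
The plan is to construct an injection
\begin{equation*}
    \Psi \colon \langle T\rangle \backslash \Aut(T^n) \longrightarrow \mathcal{AS}(X,T),
    \qquad \langle T\rangle \phi \longmapsto [\phi(\hat{x})]_\sim
\end{equation*}
on right cosets, whose cardinality coincides with $|\Aut(T^n)/\langle T\rangle|$; this will immediately yield the desired bound.

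I would first verify well-definedness and that the image lies in $\mathcal{AS}(X,T)$. Well-definedness is immediate since $T^j\phi(\hat{x}) \in \cO_T(\phi(\hat{x})) \subseteq [\phi(\hat{x})]_\sim$. For the image claim, I use that by \autoref{point_trans_for_power} the point $\hat{x}$ is $T^n$-transitive, so $\phi(\hat{x})$ is as well; and since $\phi$ is continuous and $T^n$-equivariant, it preserves the $T^n$-asymptotic relation, which on a subshift coincides with the $T$-asymptotic relation. Hence $(\phi(\hat{x}),\phi(y))$ is an asymptotic pair of distinct points. If it should happen that $\phi(y)\in\cO_T(\phi(\hat{x}))$, then $\phi(\hat{x})$ must be eventually periodic; the $T^n$-transitivity of $\phi(\hat{x})$ forbids pure periodicity in an infinite subshift, so the periodic limit of $\phi(\hat{x})$ lies on a distinct $T$-orbit inside $[\phi(\hat{x})]_\sim$, certifying that the class is a genuine asymptotic component.

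The main step will be injectivity. Assuming $[\phi(\hat{x})]_\sim = [\psi(\hat{x})]_\sim$, the point $\phi(\hat{x})$ is asymptotic to $T^k\psi(\hat{x})$ for some $k\in\Z$, and I can replace $\phi$ by $T^{-k}\phi$ (which lies in the same right coset) to assume $\phi(\hat{x})$ and $\psi(\hat{x})$ are asymptotic. By the stabilized Curtis--Hedlund--Lyndon theorem of \cite{stab}, both $\phi$ and $\psi$ admit a residue-indexed sliding block presentation: for some common radius $R$ there exist functions $f^\phi_i,f^\psi_i\colon \cL_{2R+1}(X)\to\cA$, $i\in\{0,\dots,n-1\}$, with
\begin{equation*}
    \phi(x)_k = f^\phi_{k\bmod n}(x_{[k-R,k+R]}), \qquad \psi(x)_k = f^\psi_{k\bmod n}(x_{[k-R,k+R]}).
\end{equation*}
The asymptoticity of $\phi(\hat{x})$ and $\psi(\hat{x})$ yields $f^\phi_i(\hat{x}_{[k-R,k+R]}) = f^\psi_i(\hat{x}_{[k-R,k+R]})$ for every sufficiently large $k\equiv i\pmod n$. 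I then invoke $T^n$-transitivity of $\hat{x}$: the $T^n$-orbit of $\hat{x}$ is dense, so for every $i\in\{0,\dots,n-1\}$ and every $w\in \cL_{2R+1}(X)$ there exist arbitrarily large $k\equiv i\pmod n$ with $\hat{x}_{[k-R,k+R]}=w$, forcing $f^\phi_i=f^\psi_i$ on all of $\cL_{2R+1}(X)$ and hence $\phi=\psi$ everywhere.

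The main obstacle will be to extract the residue-indexed sliding block structure of $T^n$-automorphisms from the stabilized CHL theorem with enough precision that the density claim covers each residue class $\bmod\, n$ separately; once that bookkeeping is in place, $T^n$-transitivity of $\hat{x}$ directly closes the argument.
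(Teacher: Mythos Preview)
Your proposal is correct and follows essentially the same strategy as the paper: send each coset $\langle T\rangle\phi$ to the asymptotic class of $\phi(\hat{x})$ and use the $T^n$-transitivity of $\hat{x}$ for injectivity. The paper phrases this as a pigeonhole argument and handles the injectivity step via accumulation points of the forward $T^n$-orbit of $\hat{x}$ rather than via the residue-indexed sliding block code, but the two routes are equivalent in content.
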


\begin{proof}
Let $g_0,\dots,g_K \in \mathrm{Aut}(X,T^n)$ be arbitrary.
It is enough to prove that $g_i \in \langle T\rangle g_j$ for some $i,j \in \{0,\dots,K\}$ with $i \neq j$.

Let $\cC$ be the set of asymptotic components of $(X,T)$. 
It is possible to find a set $\cC_0 \subseteq X$ such that:
\begin{enumerate}
\item[(i)] for every $C$ in $\cC$, there is an element in $\cC_0$ belonging to $C$;
\item[(ii)] for every $x$ in $\cC_0$ there is exactly one asymptotic components to which $x$ belongs; and
\item[(iii)] $\hat{x} \in \cC_0$.
\end{enumerate}
By (iii), there is $\hat{y} \in X \setminus \{\hat{x}\}$ such that $(\hat{x},\hat{y})$ is asymptotic.
Observe that $(g_i(\hat x),g_i(\hat y))$ is an asymptotic pair of $(X,T)$ for all $i \in \{0,\dots,K\}$.
Hence, by (i), there is $x_i \in \cC_0$ and $\ell_i \in \Z$ such that $(T^{\ell_i} g_i(\hat x), x_i)$ is asymptotic in $(X,T)$.
Now, (ii) implies that $|\cC_0| \leq |\cC| = K$, so we can use the Pigeonhole principle to find $i,j \in \{0,\dots,K\}$, with $i \not= j$, such that $x_i = x_j$.
Then, $(T^{\ell_i} g_i(\hat x), T^{\ell_j} g_j( \hat x))$ is asymptotic in $(X,T)$.
In particular, for any accumulation point $z$ of $(T^{kn} \hat x)_{k\geq0}$, $T^{\ell_i} g_i(z) = T^{\ell_j} g_j(z)$.
Since $\hat x$ is a transitive point for $T^n$, $T^{\ell_j} g_i = T^{\ell_i} g_j$. 
Therefore, $g_i \in \langle T\rangle g_j$ and the proposition follows.
\end{proof}

We now use some notation introduced before \autoref{aut_as_eig&min_parts}. Recall that $\lcm(n,n') \in M(T)$ for any $n,n' \in M(T)$.

\begin{lemma} \label{bound_MinAut} 
Let $(X,T)$ be a subshift with finitely many asymptotic components.
Assume that $\hat{x} \in X$ is a transitive point that is asymptotic to a different point.
Then, there exists $N \geq 1$ such that 
$$ \Aut_{M(T)}(T) = \Aut(T^N). $$
\end{lemma}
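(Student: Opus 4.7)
The plan is to combine \autoref{bound} with the directed-union structure of $\Aut_{M(T)}(T) = \bigcup_{n \in M(T)} \Aut(T^n)$ to extract a uniform bound on the number of left cosets of $\langle T\rangle$, which then forces $\Aut_{M(T)}(T)$ to be realized inside a single $\Aut(T^N)$.

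Set $K = |\mathcal{AS}(X,T)|$, which is finite by hypothesis. For every $n \in M(T)$, \autoref{bound} applies with the given $\hat x$ and yields
\[
    \left|\Aut(T^n)/\langle T\rangle\right| \leq K,
\]
where this quotient is read as a set of left cosets. The key step is to upgrade this bound from each individual $\Aut(T^n)$ to the entire union. Recall that $M(T)$ is closed under $\lcm$, as noted just before \autoref{M(T^q)}. Hence, given any finite collection $g_1,\dots,g_r \in \Aut_{M(T)}(T)$, selecting $n_i \in M(T)$ with $g_i \in \Aut(T^{n_i})$ and setting $N = \lcm(n_1,\dots,n_r) \in M(T)$, one has $g_1,\dots,g_r \in \Aut(T^N)$. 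Applying the previous bound to $\Aut(T^N)$ forces two of these representatives to share a coset of $\langle T\rangle$ whenever $r > K$. Therefore $\left|\Aut_{M(T)}(T)/\langle T\rangle\right| \leq K$.

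To conclude, I would pick representatives $g_1,\dots,g_k$ ($k \leq K$) for the cosets of $\langle T\rangle$ in $\Aut_{M(T)}(T)$, with $g_i \in \Aut(T^{n_i})$ for some $n_i \in M(T)$, and set $N = \lcm(n_1,\dots,n_k) \in M(T)$. Since $T$ and every $g_i$ lie in $\Aut(T^N)$, each coset $\langle T\rangle g_i$ sits inside $\Aut(T^N)$, giving
\[
    \Aut_{M(T)}(T) = \bigcup_{i=1}^{k} \langle T\rangle g_i \;\subseteq\; \Aut(T^N) \;\subseteq\; \Aut_{M(T)}(T),
\]
and equality throughout. I expect no substantive obstacle: the whole argument is a finite-index union trick, and the only input beyond \autoref{bound} is the closure of $M(T)$ under $\lcm$, which is already recorded via \autoref{transpowers}.
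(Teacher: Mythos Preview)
Your proof is correct and follows essentially the same approach as the paper: both exploit the uniform bound $|\Aut(T^n)/\langle T\rangle| \leq K$ from \autoref{bound} together with closure of $M(T)$ under $\lcm$ to force the directed union to stabilize. The paper organizes this via an increasing chain $q_k = \lcm(p_1,\dots,p_k)$ and observes that the finite coset spaces $\Aut(T^{q_k})/\langle T\rangle$ must eventually stabilize, whereas you bound $|\Aut_{M(T)}(T)/\langle T\rangle|$ directly and then pick coset representatives; these are minor rearrangements of the same argument.
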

\begin{proof}
Let $(p_k)_{k \geq 1}$ be an enumeration of the elements of $M(T)$.
We set $q_k = \lcm(p_1,\dots,p_k) \in M(T)$.
Then, for every $n \in M(T)$, there exists $k \geq 1$ such that $\Aut(T^n) \subseteq \Aut(T^{q_k})$, that is,
\begin{equation*}
    \Aut_{M(T)}(T) = \bigcup_{k\geq1} \Aut(T^{q_k}).
\end{equation*}
Additionally, $\Aut(T^{q_k}) \subseteq \Aut(T^{q_{k+1}})$ for all $k\geq1$. 
Hence, we have the following chain of inclusions of groups 
\begin{center}
\begin{tikzcd}[column sep=1.5em]
\Aut(T^{q_1})/\langle T\rangle
\arrow[r, hook] & \Aut(T^{q_2})/\langle T\rangle 
\arrow[r, hook] & \ \ldots \ 
\arrow[r, hook] & \Aut(T^{q_k})/\langle
\arrow[r, hook] T\rangle & \ \ldots
\end{tikzcd}
\end{center}
By \autoref{bound}, the groups on the previous chain have order bounded by $|\mathcal{AS}(T)|$, which is finite.
Therefore, there exists $k \geq 1$ such that 
\begin{equation*}
    \Aut(T^{q_k})/\langle T\rangle = \Aut(T^{q_\ell})/\langle T\rangle
    \ \text{for all $\ell \geq k$.}
\end{equation*}
This implies that $\Aut_{M(T)}(T) / \langle T \rangle = \Aut(T^{q_k})\langle T \rangle$, and thus that $\Aut_{M(T)}(T) = \Aut(T^{q_k})$.
\end{proof}

\begin{theorem} \label{theo:aut-if-finiteAsymptotic}
Let $(X,T)$ be a minimal subshift with finitely many asymptotic components, and let $N\geq 1$ be as in \autoref{bound_MinAut}. 
Then,
\begin{equation} \label{eq:aut-if-finiteAsymptotic:1}
    \Aut^\infty(X,T) = \bigcup_{m\in \Eig(T)} \Aut(X,T^{m\cdot N}).
\end{equation}
In particular, the stabilized automorphism group of $(X,T)$ is isomorphic to the direct limit
\begin{equation*} \label{eq:aut-if-finiteAsymptotic:2}
    \varinjlim_{m \in \Eig(T)}
    \Aut(T^{m\cdot N}|_{X_m}) \wr \Sym(m).
\end{equation*}
Moreover, $\Aut(X_{m}, T^{m\cdot N}|_{X_m}) / \langle T^{m\cdot N}|_{X_m} \rangle$ is isomorphic to $\Aut(X, T^N) / \langle T^N \rangle$.
\end{theorem}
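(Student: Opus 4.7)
The plan is to reduce \eqref{eq:aut-if-finiteAsymptotic:1} and \eqref{eq:aut-if-finiteAsymptotic:2} to a single key claim: for every $m\in\Eig(T)$,
\begin{equation*}
    \Aut^{M(T)}(T^m|_{X_m})=\Aut(T^{mN}|_{X_m}).
\end{equation*}
The ``Moreover'' clause will follow independently by applying item (ii) of \autoref{semidirectproduct} to the transitive system $(X,T^N)$.

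First I will invoke \autoref{minimal_sofic=>cP} to obtain $\Eig(T)=\CAP(T)$ from minimality. This activates \autoref{aut_as_eig&min_parts}, giving $\StabAut(T)\cong\varinjlim_{m\in\Eig(T)}\Aut^{M(T)}(T^m|_{X_m})\wr\Sym(m)$, whose proof moreover exhibits the decomposition $\StabAut(T)=\bigcup_{m\in\Eig(T)}\Aut^{M(T)}(T^m)$. Combining this with the canonical wreath-product isomorphism $\Aut(T^{mn})\cong\Aut(T^{mn}|_{X_m})\wr\Sym(m)$ of \autoref{semidirectproduct}, which is compatible with the inclusions as $n$ grows in $M(T)$, the key claim will translate to $\Aut^{M(T)}(T^m)=\Aut(T^{mN})$ inside $\operatorname{Homeo}(X)$, yielding \eqref{eq:aut-if-finiteAsymptotic:1}. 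Substituting the claim into the direct limit yields \eqref{eq:aut-if-finiteAsymptotic:2}.

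To prove the key claim, I will compute indices along the cofinal subchain of $n\in M(T)$ with $N\mid n$. For such $n$, \autoref{bound_MinAut} gives $\Aut(T^n)=\Aut(T^N)$. Since $n\in M(T)$ and $m\in\Eig(T)=\CAP(T)$, \autoref{transpowers} forces $\gcd(n,m)=1$; a routine check shows that $m\in\Eig(T^n)$ and that $X_m$ remains a cyclic partition of size $m$ for $T^n$ (an appropriate power of the eigenfunction of $T$ for $e^{2\pi i/m}$ serves as an eigenfunction of $T^n$ with the same level sets). Applying item (ii) of \autoref{semidirectproduct} to $(X,T^n)$ with eigenvalue $m$ gives
\begin{equation*}
    \Aut(T^{mn}|_{X_m})/\langle T^{mn}|_{X_m}\rangle\cong\Aut(T^n)/\langle T^n\rangle=\Aut(T^N)/\langle T^n\rangle.
\end{equation*}
Expanding through the tower $\langle T^{mn}|_{X_m}\rangle\leq\langle T^m|_{X_m}\rangle\leq\Aut(T^{mn}|_{X_m})$ yields
\begin{equation*}
    [\Aut(T^{mn}|_{X_m}):\langle T^m|_{X_m}\rangle]=[\Aut(T^N):\langle T^N\rangle]/N,
\end{equation*}
which is independent of $n$ and finite by \autoref{bound}. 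Hence the increasing chain $\{\Aut(T^{mn}|_{X_m}):n\in M(T),\ N\mid n\}$ has the same finite index over $\langle T^m|_{X_m}\rangle$, forcing all its terms to coincide with $\Aut(T^{mN}|_{X_m})$. Taking the union over $n$ proves the key claim.

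For the ``Moreover'' part, the same cyclic partition transfer yields $m\in\Eig(T^N)$ with cyclic partition $X_m$, and $(T^N)^m|_{X_m}=T^{mN}|_{X_m}$. Item (ii) of \autoref{semidirectproduct} applied to $(X,T^N)$ with eigenvalue $m$ then gives the desired isomorphism directly. The main technical point to verify carefully throughout is this ``cyclic partition transfer'' --- that the same set $X_m$ serves as a cyclic partition of size $m$ for $T$, for each $T^n$ with $n\in M(T)$, and for $T^N$; once this alignment is established, the rest reduces to the index bookkeeping outlined above.
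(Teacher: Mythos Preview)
Your argument is correct and follows essentially the same route as the paper: decompose an arbitrary power via \autoref{decompose:trans_peig} into an $\Eig(T)$-part and an $M(T)$-part, then reduce everything to \autoref{semidirectproduct} and \autoref{aut_as_eig&min_parts}. The paper's own proof is much terser on the crucial step. After writing $n=m\cdot k$ with $m\in\Eig(T)$ and $k\in M(T)$, it simply asserts that $\Aut(T^{mk})\subseteq\Aut(T^{mN})$ ``by the property defining $N$,'' and then says the rest follows from \autoref{semidirectproduct}. That containment does not follow immediately from $\Aut_{M(T)}(T)=\Aut(T^N)$ alone; your index computation via item (ii) of \autoref{semidirectproduct} applied to the transitive system $(X,T^n)$ is exactly what is needed to justify it. Your careful verification that the same clopen set $X_m$ serves as a cyclic partition of size $m$ for every $T^n$ with $n\in M(T)$ (using $\gcd(n,m)=1$ from \autoref{transpowers}) is likewise an ingredient the paper uses implicitly. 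The ``Moreover'' clause is handled identically in both proofs---as a direct instance of item (ii) of \autoref{semidirectproduct} for $(X,T^N)$.
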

\begin{proof}
Let $N$ be given by \autoref{bound_MinAut} and let $n\geq1$. 
We can decompose, using \autoref{decompose:trans_peig}, $n = m \cdot k$, with $m \in \CAP(T)$ and $T^k$ acting transitively on $X$.
Observe that $\ell \in \Eig(T)$ by \autoref{minimal_sofic=>cP} and that $k \in M(T)$ by the definition of $M(T)$.
Hence, by property defining $N$, $\Aut(T^n)$ is a subgroup of $\Aut(T^{N\cdot\ell})$. 
This proves \eqref{eq:aut-if-finiteAsymptotic:1}.
The rest of the theorem then follows from \autoref{semidirectproduct}.
\end{proof}

\begin{remark} \label{moregeneralversion}
In \autoref{theo:aut-if-finiteAsymptotic}, the minimality of $(X,T)$ can be weakened to satisfying Condition $\cP$ and having a transitive point that is asymptotic to a different point.
\end{remark}

\begin{corollary}
Let $(X,T)$ be a subshift that satisfies the hypothesis of the last theorem (alternatively, satisfying the hypothesis of \autoref{moregeneralversion}). 
If $(X,T)$ has finitely many rational eigenvalues, then $\StabAut(T)$ is virtually $\Z^d$, where $d$ is the greatest element in $\Eig(T)$.
\end{corollary}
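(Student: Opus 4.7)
The plan is to combine the description of $\StabAut(T)$ provided by \autoref{theo:aut-if-finiteAsymptotic} with the finiteness of $\Eig(T)$ to collapse the direct limit to a single wreath product, and then verify that this wreath product is virtually $\Z^d$.

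I would begin by noting that $\Eig(T)$ is closed under $\lcm$ by \autoref{Peig:props}, so when it is finite it possesses a unique greatest element $d$ under divisibility, which coincides with the numerically largest element (since every $m\in\Eig(T)$ divides $\lcm(m, d) \in \Eig(T)$, forcing $\lcm(m,d) = d$). The directed system
\begin{equation*}
    \{\Aut(T^{mN}|_{X_m}) \wr \Sym(m) : m \in \Eig(T)\}
\end{equation*}
of \autoref{theo:aut-if-finiteAsymptotic} is indexed by $\Eig(T)$ under divisibility with compatible inclusion maps, so the direct limit collapses to the top term:
\begin{equation*}
    \StabAut(T) \cong \Aut(T^{dN}|_{X_d}) \wr \Sym(d).
\end{equation*}

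Next I would invoke the last assertion of \autoref{theo:aut-if-finiteAsymptotic}, which gives
\begin{equation*}
    \Aut(T^{dN}|_{X_d})/\langle T^{dN}|_{X_d}\rangle \cong \Aut(X,T^N)/\langle T^N\rangle.
\end{equation*}
Since $N \in M(T)$, \autoref{bound} ensures that $\Aut(X,T^N)/\langle T\rangle$ is finite; combined with $[\langle T\rangle : \langle T^N\rangle] = N$, this yields that $\Aut(X,T^N)/\langle T^N\rangle$ is finite. Under the standing hypotheses $X$ must be infinite (a proper asymptotic pair cannot exist in a finite system), so $\langle T^{dN}|_{X_d}\rangle \cong \Z$, and $\Aut(T^{dN}|_{X_d})$ is a finite extension of its central subgroup $\langle T^{dN}|_{X_d}\rangle \cong \Z$, hence virtually $\Z$.

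Finally, writing $H = \Aut(T^{dN}|_{X_d})$ and $C = \langle T^{dN}|_{X_d}\rangle \cong \Z$, the subgroup $C^d \times \{1\}$ of $H \wr \Sym(d) = H^d \rtimes \Sym(d)$ is isomorphic to $\Z^d$, is $\Sym(d)$-invariant (since permuting identical factors preserves $C^d$), and has index $[H:C]^d \cdot d!$ in $H \wr \Sym(d)$, which is finite. Therefore $\StabAut(T)$ is virtually $\Z^d$. I anticipate no serious obstacle; the two small points to verify carefully are the collapse of the direct limit (immediate once $d = \max \Eig(T)$ is exhibited) and the infiniteness of $X$ (forced by the existence of a proper asymptotic pair in the hypotheses of \autoref{theo:aut-if-finiteAsymptotic} and \autoref{moregeneralversion}).
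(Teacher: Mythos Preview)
Your proof is correct and is the natural argument; the paper states this corollary without proof, and your approach---collapsing the union in \eqref{eq:aut-if-finiteAsymptotic:1} to the single term $\Aut(X,T^{dN})$ via the maximal element $d\in\Eig(T)$, then exhibiting $\langle T^{dN}|_{X_d}\rangle^d$ as a finite-index $\Z^d$ inside the resulting wreath product---is exactly what the authors presumably had in mind. One cosmetic point: the $\lcm$-closure of $\Eig(T)$ is remarked immediately after its definition in Section~\ref{section:rationaleigenvalues}, so you need not route through \autoref{Peig:props} (which concerns $\CAP(T)$).
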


\section{Irreducible subshifts of finite type}\label{section:irreducibleSFT}

In \cite{Pent}, Schmieding proved the that if $(X, \sigma_X)$, $(Y, \sigma_Y )$ are non-trivial mixing shifts of finite type with isomorphic stabilized automorphism groups, then $h_{\mathrm{top}}(\sigma_X) / h_{\mathrm{top}}(\sigma_Y) \in \Q$.
Making use of \autoref{semidirectproduct} and \autoref{prop:main_alg_lemma}, we are able to extend Schmieding's methods to cover irreducible subshifts of finite type.

\begin{proof}[Proof of \autoref{main:irreducible_sofic_case}] 
By the Smale spectral decomposition \cite{Smale}, there exist $m, k \geq 1$ and clopen sets $X_m\subseteq X$ and $Y_k\subseteq Y$ such that $\{\sigma_X^i(X_m): 0\leq i<m\}$ and $\{\sigma_Y^i(Y_k): 0\leq i<k\}$ form disjoint clopen partitions of $X$ and $Y$, respectively. 
Moreover, $(X_m, \sigma_X^m|_{X_m})$ and $(Y_k, \sigma^k_Y|_{Y_k})$ are mixing. 
Since any mixing system has no eigenvalues other than $1$, we have that $m$ and $k$ are the largest elements of $\Eig(\sigma_X)$ and $\Eig(\sigma_Y)$ respectively. 
If $m = 1$ or $k = 1$, then $m = k = 1$ and we conclude by the main result of Schmieding's paper \cite{Pent}.
In what follows, we assume that $m ,k \geq 4$; see \autoref{rem:scott-generalization} for the considerations that have to be taken in the general case.

We have, as $\Eig(\sigma_X) = \CAP(\sigma_X)$ and $\Eig(\sigma_Y) = \CAP(\sigma_Y)$ by \autoref{minimal_sofic=>cP}, that \autoref{aut_as_eig&min_parts} can be applied, yielding
\begin{align*}
\StabAut(\sigma_X)   &\cong
\Aut_{M(\sigma_X)}(\sigma_X^m|_{X_m}) \wr \Sym(m)   \\ &
\cong   \Aut_{M(\sigma_Y)}(\sigma_Y^k|_{Y_k}) \wr \Sym(k)  
\cong  \StabAut(\sigma_Y).
\end{align*}
Being $m, k \geq 4$, we can use \autoref{prop:main_alg_lemma}, which yields $m=k$ and 
$$\Aut_{M(\sigma_X)}(\sigma_X^m|_{X_m}) \cong \Aut_{M(\sigma_Y)}(\sigma_Y^k|_{Y_k}).$$
Now, since $(X_m, \sigma_X^m|_{X_m})$ and $(Y_k, \sigma^k_Y|_{Y_k})$ are mixing, we can use the main result in \cite{Pent} to obtain that
$$\frac{h_{\mathrm{top}}(\sigma_X^m|_{X_m})}{h_{\mathrm{top}}(\sigma^k_Y|_{Y_k})}\in \Q.$$
As $h_{\mathrm{top}}(\sigma_X^m|_{X_m}) = m\cdot h_{\mathrm{top}}(\sigma_X)$ and $h_{\mathrm{top}}(\sigma_Y^k|_{Y_k}) = k\cdot h_{\mathrm{top}}(\sigma_Y)$, the theorem follows. 
\end{proof}

\begin{remark} \label{rem:scott-generalization}
The case in which $m \in \{2,3\}$ or $k \in \{2,3\}$ can be treated as follows.
The remark that follows \autoref{prop:main_alg_lemma} affirms that one of the groups $\StabAut(\sigma_X^m|_{X_m})$ and $\StabAut(\sigma_Y^k|_{Y_k})$ is a subgroup of index at most 2 of the other.
Thus, the technique of \cite{Pent} can be used to prove that $h_{\mathrm{top}}(\sigma_X^m|_{X_m}) / h_{\mathrm{top}}(\sigma_Y^k|_{Y_k}) \in \Q$. 
This implies that the conclusion of \autoref{main:irreducible_sofic_case} holds in this case.
\end{remark}


\printbibliography

\end{document}